\title[An explicit isomorphism between quantum and classical $\sl_n$]
{An explicit isomorphism between\\ quantum and classical $\sl_{n}$}
\author[A. Appel]{Andrea Appel}
\address{School of Mathematics,
University of Edinburgh,
James Clerk Maxwell Building, 
Peter Guthrie Tait Road,
Edinburgh, EH9 3FD, UK}
\email{andrea.appel@ed.ac.uk}
\author[S. Gautam]{Sachin Gautam}
\address{Department of Mathematics, The Ohio State University,
Columbus, OH 43210 (USA)
}
\email{gautam.42@osu.edu}
\newtheorem*{thm}{Theorem}
\newtheorem*{prop}{Proposition}
\newtheorem*{lem}{Lemma}
\newtheorem*{cor}{Corollary}
\newtheorem*{conj}{Conjecture}
\newenvironment{pf}{\paragraph{{\sc Proof}}}{\qed\par\medskip}
\theoremstyle{definition}
\numberwithin{equation}{section}
\numberwithin{figure}{section}
\newcommand{\phiCP}{\varphi_{\scriptstyle{\text{CP}}}}
\newcommand{\Tsl}{\operatorname{T}} 
\newcommand{\Psl}{\operatorname{P}} 
\newcommand{\roots}[2]{\zeta^{(#1)}_{#2}} 
\newcommand{\commelt}[2]{\zed_{#2}^{(#1)}} 
\newcommand{\fkS}{\mathfrak{S}} 
\newcommand{\hhalf}{\frac{\hbar}{2}}
\newcommand{\qmin}[3]{\Delta^{#1}_{#2}\lp\Tsl\rp\lp #3\rp}
\newcommand{\psiop}[2]{\psi^{(#1)}\left[#2\right]}
\newcommand{\psiopw}[1]{\psi^{(#1)}}
\newcommand{\psiopww}{\psi}
\newcommand{\qminpsi}[4]{\Delta^{#2}_{#3}\lp\psiopw{#1} \rp\lp #4\rp}
\newcommand{\qminpsiw}[3]{\Delta^{#1}_{#2}\lp\psiopww \rp\lp #3\rp}
\newcommand{\tqmin}[3]{\Delta^{#1}_{#2}(\phi)\lp #3\rp}
\newcommand{\ten}{\otimes}
\newcommand{\semi}[1]{\mathsf{SC}(#1)}
\newcommand{\bs}[1]{\left| #1\right\rangle}
\newcommand{\elm}[2]{\left| #1\right\rangle\left\langle #2\right|}
\newcommand{\aux}[1]{\operatorname{End}(\C^{#1})}
\newcommand{\auxaux}[1]{\operatorname{End}(\C^{#1}\otimes\C^{#1})}
\newcommand{\haux}[2]{\operatorname{End}\lp \lp \C^{#1}\rp^{\otimes #2}\rp}
\newcommand{\RTT}{\operatorname{RTT}}
\newcommand{\cowt}{\varpi^{\vee}}
\newcommand{\Tr}[1]{\operatorname{Tr}_{#1}}
\newcommand{\id}{\mathbf{1}}
\newcommand{\veps}{\varepsilon}
\newcommand{\lp}{\left(}
\newcommand{\rp}{\right)}
\newcommand{\lb}{\left[}
\newcommand{\rb}{\right]}
\newcommand{\g}{\mathfrak{g}}
\newcommand{\h}{\mathfrak{h}}
\newcommand{\gl}{\mathfrak{gl}}
\newcommand{\Y}{$\mathcal{Y}$}
\newcommand{\bfA}{\mathbf{A}}
\newcommand{\C}{\mathbb{C}}
\newcommand{\N}{\mathbb{Z}_{\geqslant 0}}
\newcommand{\Q}{\mathbb{Q}}
\newcommand{\Z}{\mathbb{Z}}
\newcommand {\wh}[1]{\widehat{#1}}
\newcommand {\ol}[1]{\overline{#1}}
\newcommand {\ul}[1]{\underline{#1}}
\newcommand{\End}{\operatorname{End}}
\newcommand{\Ad}{\operatorname{Ad}}
\newcommand{\ev}{\mathsf{ev}}
\newcommand{\evcp}{\mathsf{ev}_{\scriptstyle{\operatorname{CP}}}}
\newcommand {\aand}{\qquad\text{and}\qquad}
\newcommand {\lhs}{left--hand side }
\newcommand {\rhs}{right--hand side }
\newcommand {\wrt}{with respect to }
\newcommand{\ds}{\displaystyle}
\newcommand{\wt}[1]{\widetilde{#1}}
\newcommand{\zed}{\mathfrak{z}}
\newcommand {\Omit}[1]{}
\newcommand{\qdet}{\operatorname{qdet}}
\newcommand {\Uhg}{U_\hbar\g}
\newcommand{\Uhsl}[1]{U_{\hbar}(\Lsl_{#1})}
\newcommand {\Yhg}{Y_\hbar(\g)}
\newcommand {\Yhgl}[1]{Y_\hbar(\gl_{#1})}
\newcommand{\Yhsl}[1]{Y_{\hbar}(\sl_{#1})}
\newcommand {\Usl}[1]{U(\Lsl_{#1})}
\newcommand {\isom}{\stackrel{\sim}{\rightarrow}}
\renewcommand {\gl}{\mathfrak{gl}}
\renewcommand {\sl}{\mathfrak{sl}}
\newcommand {\Id}{\operatorname{Id}}
\newcommand {\ie}{{\it i.e.}, }
\newcommand{\bfI}{{\mathbf I}}
\newcommand{\hext}[1]{{#1}[\negthinspace[\hbar]\negthinspace]}
\renewcommand{\Uhsl}[1]{U_{\hbar}\sl_{#1}}
\renewcommand{\Usl}[1]{U\sl_{#1}}
\renewcommand{\Yhsl}[1]{Y_{\hbar}\sl_{#1}}
\renewcommand{\Yhgl}[1]{Y_{\hbar}\gl_{#1}}
\renewcommand{\id}{\mathsf{id}}
\newcommand{\fmls}{\hext{\C}}
\newcommand{\Ugh}{\hext{U\g}}
\subjclass[2010]{Primary: 17B37; Secondary: 81R50}
\keywords{Quantum groups; deformation theory; Yangians}
\begin{document}

\begin{abstract}
Let $\g$ be a complex semisimple Lie algebra. Although the quantum group $\Uhg$ is known
to be isomorphic, as an algebra, to the undeformed enveloping algebra $\Ugh$, no such
isomorphism is known when $\g\neq \sl_2$. In this paper, we construct an explicit isomorphism
for $\g = \sl_n$, for every $n\geqslant 2$, which preserves the standard flag of type $\mathsf{A}$.
We conjecture that this isomorphism quantizes the Poisson diffeomorphism of Alekseev and Meinrenken
\cite{am-gz}.
\end{abstract}

\maketitle
\setcounter{tocdepth}{1}
\tableofcontents

\section{Introduction}\label{sec:intro}

\subsection{}
Let $\g$ be a complex semisimple Lie algebra.
The Drinfeld--Jimbo quantum group $\Uhg$ is a topological Hopf algebra over 
the ring of formal power series $\fmls$, which deforms
the universal enveloping algebra $U\g$ \cite{drinfeld-qybe,jimbo-qg, jimbo-R-matrix}. 
In \cite{drinfeld-almost}, Drinfeld pointed out that the 
algebra structure of $\Ugh$ remains unchanged under quantization, 
\ie there exists an isomorphism of 
$\fmls$--algebras $\psi:\Uhg\to\Ugh$, which is congruent to the identity modulo $\hbar$.
Due to its cohomological origin, such an isomorphism is highly non--canonical and 
indeed unknown with the sole exception of $\sl_2$ (e.g. \cite[\S 5]{drinfeld-almost} and 
\cite[Prop.~6.4.6]{chari-pressley}).

In this paper, we contruct an explicit algebra isomorphism 
$\varphi : \Uhsl{n}\to \hext{\Usl{n}}$ for any $n\geqslant2$.  
We refer the reader to Section \ref{sec:main}, Theorem \ref{thm:main1}, 
for the formulae defining $\varphi$.
Here, we will explain how the isomorphism is obtained
and state some of its properties.

\subsection{}

Our construction relies on the homomorphism between the quantum loop algebra
and the Yangian of $\sl_n$ from \cite{sachin-valerio-1}. Namely, $\varphi$ is defined
to be the following composition
\[
\xymatrix@C=2cm{
U_\hbar L\sl_{n} \ar[r]^{\Phi} & \wh{\Yhsl{n}} \ar[d]^{\ev}\\
\Uhsl{n} \ar@{^(->}[u] \ar[r]_{\varphi}& \hext{\Usl{n}}
}
\]
where 
\begin{itemize}
\item $\Phi$ is the algebra homomorphism between the quantum loop algebra
and (an appropriate completion of) the Yangian of $\sl_n$, 
defined and studied by the second author and V. Toledano Laredo in \cite{sachin-valerio-1}.
While the results of \cite{sachin-valerio-1} hold for any semisimple Lie algebra $\g$,
in this paper we only need them for $\sl_n$, and even further, only the restriction
of $\Phi$ to $\Uhsl{n}$. We refer the reader to Section \ref{sec:ysln}
for a brief review of \cite{sachin-valerio-1}.

\item $\ev$ is the evaluation homomorphism at $0$. 
It is well known that the evaluation homomorphism exists only in type $\mathsf{A}$ (see
\cite[Prop.~12.1.15]{chari-pressley} for both these statements).
\end{itemize}

The homomorphism $\Phi$ in the diagram above is given explicitly in
the loop presentation of Yangian (also known as \emph{Drinfeld's new presentation}
\cite{drinfeld-yangian-qaffine}), while the evaluation 
homomorphism $\ev$ is known only in either the $J$--presentation,
or the $RTT$ presentation of Yangian (see, for example, \cite[Chapter 12]{chari-pressley}
and \cite[Chapter 1]{molev-yangian}). 
Thus, in order to work out an explicit formula for $\varphi$, defined via the diagram
above, we need to rewrite $\ev$ in terms of the Drinfeld currents
of $\Yhsl{n}$.

\subsection{}
One of the main results of this
paper, Theorem \ref{thm:evishom}, achieves exactly this. 
A large part of this paper is devoted towards proving Theorem \ref{thm:evishom},
but we can briefly explain the idea behind it, which could be of independent interest.\\

We construct a matrix $\Tsl \in \End(\C^n)\otimes \Usl{n}$ such that $\Tsl(u):= u\Id - \hbar \Tsl$
satisfies the $\RTT$ relations with respect to the Yang's $R$--matrix $R(u) := u\Id + \hbar P \in 
\End(\C^n\otimes\C^n)[u,\hbar]$ (here $P$ is the flip operator). See Section \ref{ssec:transfer} 
and Proposition \ref{pr:rtt} for these assertions. This observation allows us to work out the 
commutation relations between the quantum minors of $\Tsl(u)$ using the standard techniques 
of $\RTT$ algebras (Proposition \ref{pr:minor-comm}). Morally speaking, $\RTT$ relations
make sure that linear algebra carries over to the matrix $\Tsl(u)$, even though its
entries are from a non--commutative ring. As a side consequence, one obtains another
proof of a few well--known results from the classical invariant theory of $\sl_n$, 
see Proposition \ref{pr:bethe}.
We would like to refer to \cite[Chapter 1]{molev-yangian} for a more thorough
treatment, which also inspired most of our proofs in Section \ref{sec:qminsl}. The
sole exception being Proposition \ref{pr:psi-operator}, which carries
out an important reduction (to rank 1 and 2) step. Our proof of this result is 
new and seems to be applicable to more general situations.

\subsection{}

We now highlight some of the properties of $\varphi: \Uhsl{n}\to \hext{\Usl{n}}$,
which are clear from the formulae given in Section \ref{ssec:iso}.

\begin{enumerate}
\item $\varphi$ is defined over $\Q$ (see Remark \ref{ssec:main1-rems} (ii)).
\item $\varphi$ preserves the standard flag of Levi subalgebras in $\sl_{n}$. 
More precisely, let $k\in \{1,\ldots,n-1\}$ and identify $\Uhsl{k+1}$ with the 
subalgebra of $\Uhsl{n}$ generated by $\{K_i,E_i,F_i\}_{1\leq i\leq k}$ 
(see Section \ref{ssec:qsl}). Similarly, let $\sl_{k+1}$ be viewed as a Lie subalgebra of 
$\sl_n$, corresponding to matrices supported on the top--left corner of order $k+1$. 
Then $\displaystyle \varphi\lp \Uhsl{k+1}\rp \subset \hext{U(\h+\sl_{k+1})}$, where
$\h\subset\sl_n$ is the Cartan subalgebra of diagonal matrices. The appearance of
the Levi subalgebra $\h+\sl_{k+1}$ is necessary since the diagonal entries of
$\Tsl$ are supported on $\h$ and do not respect the standard embedding of $\sl_{k+1}$
in $\sl_n$ (cf. Section \ref{ssec:transfer}).
\item $\varphi$ has a {\em semi--classical limit}, as we explain in the next paragraph.
\end{enumerate}

\subsection{}
The identification of $\Uhg$ and $\Ugh$ as algebras has a more geometric 
interpretation as a quantization of the (formal) Ginzburg--Weinstein linearization 
theorem. 
In \cite{ginzburg-weinstein}, Ginzburg and Weinstein proved 
that, for any compact Lie group $K$ with its standard Poisson structure, the dual 
Poisson Lie group $K^*$ is Poisson isomorphic to the dual Lie algebra 
$\mathfrak{k}^*$, with its canonical linear (Kostant-Kirillov-Souriau) Poisson 
structure. Subsequently, this result has been generalized in the works of Alekseev and
Meinrenken \cite{alekseev,am-gz,alekseev-meinrenken} and Boalch \cite{boalch-stokes}.
Finally, in \cite{enriquez-etingof-marshall}, Enriquez, 
Etingof and Marshall provided a construction, for any
finite--dimensional quasitriangular Lie bialgebra $\g$, of 
a family of formal Poisson isomorphisms
between the Poisson manifolds $\g^*$ and $G^*$. 
We refer to such maps as \emph{formal dual 
exponential maps}.\\

Formal dual exponential  maps can be \emph{quantized} by certain algebra isomorphisms 
$\psi:\Uhg\to\Ugh$ as we explain now. Let $B$ be a quantized universal enveloping algebra (QUE), \ie
an $\hbar$--adically complete Hopf algebra over $\hext{\C}$,
together with a fixed isomorphism of Hopf algberas
$B/\hbar B\isom U\mathfrak{a}$, where $\mathfrak{a}$
is a Lie algebra over $\C$.
According to the \emph{quantum duality principle} of Drinfeld and 
Gavarini \cite{drinfeld-quantum-groups, gavarini}, any QUE $B$ contains a
{\em quantized formal series Hopf subalgebra} (QFSH)
\[
B':= \{b\in B\ |\ \forall\ n\geq 1,\ p_n(b)\in\hbar^n B^{\otimes n}\}\subset B,
\]
where, $p_n := (\id-\iota\varepsilon)^{\otimes n} \circ \Delta^{(n)}
: B\to B^{\otimes n}$. Here, $\Delta^{(n)}$ is the $n^{\text{th}}$
iterated coproduct, $\varepsilon:B\to\hext{\C}$
is the counit, and $\iota:\hext{\C}\to B$ is the unit.\\

The semi--classical limit of $B$ is then defined as $\semi{B}
:= B'/\hbar B'$. We say that an algebra homomorphism 
$\psi:A\to B$ between two QUEs admits a semiclassical limit if 
it respects the QFSH $A'$ and $B'$ and therefore it descends to a homomorphism
$\semi{\psi}:\semi{A}\to\semi{B}$. Alternatively, we say that $\psi$
is a quantization of $\semi{\psi}$.\\

It is easy to see that, in the case of the QUEs $\Uhg$ and $\Ugh$, 
one has (cf. \cite{gavarini})
\[
\semi{\Uhg} \cong \mathcal{O}_{G^*} \aand
\semi{\Ugh} \cong \C[\negthinspace[\g^*]\negthinspace]=:\wh{\mathcal{O}}_{\g^*}.
\]
Therefore, any isomorphism of algebras $\psi:\Uhg\to\Ugh$ preserving the QFSHs
$(\Uhg)'$ and $(\Ugh)'$ is a \emph{quantized dual exponential map}, \ie it has a 
semi--classical limit and it gives rise to a formal dual exponential map $\g^*\to G^*$. 

\subsection{}
In the case of $\g=\sl_n$, the isomorphism $\varphi$, which we construct in Theorem \ref{thm:main1}, 
is a quantized dual exponential map and $\semi{\varphi}$ gives rise to a dual exponential map 
$\sl_n^*\cong\sl_n\to SL_n^*$. In \cite[Thm. 1.2]{am-gz}, Alekseev and Meinrenken show that such 
Poisson morphism exists and it is uniquely determined by certain properties, which are the analogues 
of (1) and (2) listed above. One verifies directly that for $n=2$ $\semi{\varphi}$ coincides with the
Alekseev--Meinrenken map. This suggest the following
\begin{conj}\label{iconj: sc=am}
The isomorphism $\varphi$ is a canonical quantization of the Alekseev--Mein-renken dual
exponential map $\sl_n^*\to SL_n^*$ \cite[Thm.1.2]{am-gz}.
\end{conj}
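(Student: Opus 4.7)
The plan is to deduce Conjecture \ref{iconj: sc=am} from the uniqueness characterization of the Alekseev--Meinrenken dual exponential map given in \cite[Thm.~1.2]{am-gz}. The argument naturally splits into two independent parts. First, one must establish that $\varphi$ genuinely admits a semi--classical limit $\semi{\varphi}\colon \sl_n^*\to SL_n^*$, \ie that $\varphi$ preserves the QFSH subalgebras $(\Uhsl{n})'$ and $(\Ugh)'$. Second, one must verify that this limit satisfies the properties which uniquely pin down the Alekseev--Meinrenken map.

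For the first step, one tests the QFSH condition on generators. The Drinfeld--Gavarini subalgebra $(\Uhsl{n})'$ is generated by $K_i-1$, $\hbar E_i$, $\hbar F_i$, while $(\Ugh)'$ is generated by $\hbar\cdot\sl_n$. Using the explicit formulae from Section \ref{ssec:iso} expressing $\varphi$ in terms of quantum minors of $\Tsl(u)$, the inclusion $\varphi((\Uhsl{n})')\subseteq(\Ugh)'$ should reduce to a term--by--term check of the $\hbar$--valuations of the matrix entries of $\Tsl$. An alternative, more conceptual route is to observe that $\Phi$ is known to respect QFSH structures by construction, while the evaluation homomorphism $\ev$ dualizes to the closed embedding $\sl_n^*\hookrightarrow \Ysl{n}^\ast$ at the semi--classical level; combining these two facts directly yields the desired inclusion, and hence the existence of $\semi{\varphi}$ as a Poisson isomorphism.

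For the second step, the Alekseev--Meinrenken map is characterized by Cartan--equivariance and compatibility with the standard flag of Levi subalgebras, together with a normalization on the diagonal/Cartan part. Property (2) listed in the paper, namely $\varphi(\Uhsl{k+1})\subset\hext{U(\h+\sl_{k+1})}$, passes to the semi--classical limit and yields precisely the flag compatibility of $\semi{\varphi}$. Cartan equivariance should follow from the fact that the currents $\psiopw{i}$ generating the diagonal part of $\Tsl$ live in $\h$. Finally, the Cartan normalization can be pinned down by induction on $n$: the case $n=2$ is already verified by direct computation, and the induction step follows from the flag property combined with the uniqueness clause of \cite[Thm.~1.2]{am-gz} applied to the restriction to each $\sl_{k+1}\subset\sl_n$.

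The main obstacle is the explicit identification of the Cartan/normalization data of $\semi{\varphi}$ with that of the Alekseev--Meinrenken map. Unlike the flag property, this does not descend transparently from a formal property of $\varphi$ and requires a genuine computation on the coordinate ring $\mathcal{O}_{SL_n^*}$, which is delicate because $\varphi$ is constructed through the $\RTT$--presentation of the Yangian rather than through the Cartan--Weyl generators used in \cite{am-gz}. A secondary technical point is to ensure that the uniqueness statement of \cite[Thm.~1.2]{am-gz}, originally formulated for smooth Poisson diffeomorphisms, can be transferred to the formal power series setting; this should follow by combining it with the uniqueness results for formal dual exponential maps in \cite{enriquez-etingof-marshall}.
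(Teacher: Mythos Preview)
The statement you are trying to prove is labelled \textbf{Conjecture} in the paper, and the paper does not prove it. Immediately after stating it, the authors write ``We will return to this conjecture in \cite{andrea-sachin-2}'', where \cite{andrea-sachin-2} is listed in the bibliography as \emph{in preparation}. So there is no proof in the paper to compare your proposal against; the authors explicitly defer the argument to future work.

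That said, your strategy is precisely the one the paper gestures toward in the paragraph preceding the conjecture: the authors note that the Alekseev--Meinrenken map is uniquely characterized by properties analogous to (1) and (2), that $\semi{\varphi}$ exists, and that the $n=2$ case checks out directly. Your plan to invoke that uniqueness is therefore the intended route. However, you should be aware that the obstacles you yourself flag are genuine and are exactly why this remains a conjecture. In particular: the claim that $\Phi$ and $\ev$ preserve QFSH structures is asserted neither in this paper nor in \cite{sachin-valerio-1}, so your ``more conceptual route'' for the first step is not a citation but a separate result you would need to prove. The flag compatibility in property (2) involves the Levi $\h+\sl_{k+1}$ rather than $\sl_{k+1}$ itself, so its passage to the semi--classical limit and its matching with the Gelfand--Zeitlin flag condition in \cite{am-gz} require care. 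And the normalization/Cartan matching you call ``the main obstacle'' is indeed the heart of the matter; the inductive argument you sketch does not obviously close, since uniqueness in \cite{am-gz} is stated for the full map on $\sl_n^*$, not for a collection of restrictions glued together.

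In short: your proposal is a reasonable outline of how the conjecture \emph{might} be proved, consistent with the authors' own hints, but it is not a proof, and the paper contains none.
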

We will return to this conjecture in \cite{andrea-sachin-2}.

\subsection{}
Finally, it is worth mentioning that in the case $n=2$ the isomorphism $\varphi$
is easily seen to identify (up to a sign) the action of the quantum Weyl group operator of $\Uhsl{2}$
with the exponential of the Casimir element of $U\sl_2$. This simple observation provides a direct
and straightforward proof that the monodromy of the Casimir connection is computed by the quantum 
Weyl group operators of the quantum group (cf. \cite{atl0, valerio0}). A further study of the isomorphism
$\varphi$ could lead to a similar result for $\sl_n$, $n\geqslant 2$, providing a direct proof
which does not rely on any cohomological results, in contrast with the methods used in \cite{atl1, valerio1}.

\subsection{Outline of the paper}
In Section \ref{sec:main}, we recall the basic definitions of the enveloping algebra
$U\sl_n$, the quantum group  $U_{\hbar}\sl_n$ and 
describe the isomorphism between 
$U_{\hbar}\sl_n$ and $\hext{U\sl_n}$ in Theorem \ref{thm:main1}.
We then 
discuss the case of $\sl_2$ and we give a direct proof 
that the proposed map is an algebra homomorphism.
In Section \ref{sec:ysln}, we review the definition of the Yangian $\Yhsl{n}$
and the main construction of \cite{sachin-valerio-1}, yielding an algebra 
homomorphism from $\Uhsl{n}$ to the completion of $\Yhsl{n}$ \wrt its $\N$--grading. 
In Section \ref{sec:qminsl}, we study the matrix $\Tsl(u) \in \Usl{n}[\hbar,u]$
introduced in Section \ref{sec:main}
and the relations satisfied by its quantum minors. In particular, we show that $\Tsl(u)$ 
satisfies the 
$RTT=TTR$ relation (Proposition \ref{pr:rtt}) which leads to an analogue of 
the Capelli identity for $\sl_n$. That is,
the coefficients of the quantum--determinant of $\Tsl(u)$ are algebraically independent and 
generate the center of $U\sl_n$ (see Proposition \ref{pr:bethe}).
In Section \ref{sec:eval}, the determinant identities obtained in the previous 
section are used to construct 
the evaluation homomorphism from $\Yhsl{n}$ to ${U\sl_n}[\hbar]$ in the loop
presentation of the Yangian (Theorem \ref{thm:evishom}). This is the
last step in the proof of Theorem \ref{thm:main1}.


\subsection{Acknowledgements} 
We are very grateful to Pavel Etingof for suggesting this problem to us, 
and to Valerio Toledano Laredo for patiently explaining to us the semi--classical
limits and pointing us to the literature on Poisson geometry.
We are also thankful to Maxim Nazarov, Alexander Tsymbaliuk, and Alex Weekes  
for their useful comments and suggestions. 

This research was supported in part by Perimeter Institute for Theoretical Physics.
Research at Perimeter Institute is supported by the Government of Canada through 
the Department of Innovation, Science and Economic Development and by the 
Province of Ontario through the Ministry of Research and Innovation.
A.~A. was partially supported by the ERC grant
STG--637618 and the NSF grant DMS--1255334. S.~G. acknowledges the generous support
of the Simons Foundation, in the form of a collaboration grant for mathematicians, number
526947.


\section{The isomorphism between $\Uhsl{n}$ and $\hext{U\sl_n}$}\label{sec:main}

In this section, we recall the definition of the enveloping algebra
$U\sl_n$ and the quantum group $U_{\hbar}\sl_n$. We state the
main theorem, describing the isomorphism between them. As an 
example, we prove the case of $\sl_2$ by direct computation.

\subsection{Notations}\label{ssec:An-notations}

Let $n\in\Z_{\geqslant 2}$ and let $\bfI = \{1,\ldots,n-1\}$. Let $\bfA = (a_{ij})_{i,j\in\bfI}$
be the Cartan matrix of type $\mathsf{A}_{n-1}$. Namely,
\[
a_{ij} = \left\{\begin{array}{ccl} 2 & \text{if} & i=j \\
-1 & \text{if} & |i-j|=1 \\ 0 & \text{if} & |i-j|>1 \end{array}\right.
\]
Throughout this paper, we consider $\hbar$ to be a formal variable and 
set $q = e^{\hhalf} \in \fmls$.

\subsection{Quantum group}\label{ssec:qsl}

$\Uhsl{n}$ is a unital associative algebra over $\fmls$ (topologically) generated by
$\{H_i, E_i, F_i\}_{i\in\bfI}$ subject to the following list of relations:
\begin{itemize}
\item[(QG1)] For each $i,j\in\bfI$
\[
[H_i,H_j]=0;
\]
\item[(QG2)] For each $i,j\in\bfI$, we have
\[
[H_i,E_j]=a_{ij}E_j \qquad\mbox{and}\qquad [H_i,F_j]=-a_{ij}F_j;
\]
\item[(QG3)] For each $i,j\in\bfI$, we have
\[
[E_i,F_j] = \delta_{ij} \frac{K_i-K_i^{-1}}{q-q^{-1}},
\]
where we set $K_i = e^{\hbar H_i/2}$.
\item[(QG4)] For $i\neq j$, we have $[E_i,E_j]=0=[F_i,F_j]$ if $a_{ij}=0$. If $a_{ij}=-1$:
\[
E_i^2E_j - (q+q^{-1})E_iE_jE_i + E_jE_i^2=0, 
\]
\[
F_i^2F_j - (q+q^{-1})F_iF_jF_i + F_jF_i^2=0.
\]
\end{itemize}

$\Uhsl{n}$ has a structure of Hopf algebra, with coproduct and counit given,
respectively, by
\begin{align*}
\Delta(H_i) &= H_i\otimes 1 + 1\otimes H_i ,\\
\Delta(E_i) &= E_i\otimes K_i + 1\otimes E_i ,\\
\Delta(F_i) &= F_i\otimes 1  + K_i^{-1}\otimes F_i,
\end{align*}
and $\veps(H_i) = \veps{E_i} = \veps(F_i) = 0$.

\subsection{Universal enveloping algebra of $\sl_n$}\label{ssec:usl}

Recall that $\Usl{n}$ is a unital associative algebra over $\C$
generated by $\{h_i, e_{kl}\}_{1\leqslant i\leqslant n-1, 1\leqslant k\neq l\leqslant n}$ subject
to the following relations: $[h_i,h_j]=0$, 
$[h_i,e_{kl}] = (\delta_{ik}-\delta_{il}-\delta_{i+1,k}+\delta_{i+1,l})e_{kl}$,
and $[e_{k,l},e_{k',l'}] = \delta_{l,k'}e_{k,l'} - \delta_{k,l'}e_{k',l}$,
where we understand that $h_i = e_{i,i}-e_{i+1,i+1}$. Thus we have
$e_{kk}-e_{ll} = h_k + \cdots + h_{l-1}$, for $1\leqslant k<l\leqslant n$.

Let $\h$ be the span of $\{h_i\}_{1\leqslant i\leqslant n-1}$. The standard bilinear form 
defined by the Cartan matrix of $\sl_n$ on $\h$ is given by $(h_i,h_j) = a_{ij}$. 
With respect to $(\cdot,\cdot)$, we consider the fundamental coweights $\cowt_i \in\h$ 
defined by $(\cowt_i,h_j) = \delta_{ij}$, so that
\[
\cowt_i = \frac{1}{n}\lp (n-i)\sum_{j=1}^{i-1} jh_j + i\sum_{j=i}^{n-1} (n-j)h_j\rp.
\]

\subsection{$\operatorname{T}$--matrix}\label{ssec:transfer}
Let $\Tsl = (\Tsl_{ij})_{1\leqslant i,j\leqslant n}$ be $n\times n$ matrix with entries from $\Usl{n}$
defined as
\footnote{In \cite{dskv17}, De Sole--Kac--Valeri introduced a more general version of the 
matrix $T$, which includes the classical Lie algebras $\mathfrak{so}_n$ and $\mathfrak{sp}_n$.}
:
\[
\Tsl_{ij}=\left\{
\begin{array}{ccc}
\cowt_i-\cowt_{i-1} & \mbox{if} & i=j\\
e_{ij} & \mbox{if} & i\neq j\\
\end{array}
\right.
\]

Here we assume that $\cowt_{0} = \cowt_n = 0$. Note that the diagonal entries
are uniquely determined by the requirement that $\sum_{i=1}^n \Tsl_{ii} = 0$ and
that for every $1\leqslant k<l\leqslant n$:
\begin{equation}\label{eq:X-diagonal}
\Tsl_{kk} - \Tsl_{ll} = e_{kk}-e_{ll} = h_k+\cdots +h_{l-1}.
\end{equation}

Define $\Tsl(u) := u\Id - \hbar\Tsl$. Given $1\leqslant m\leqslant n$ and $\ul{a} = (a_1,\ldots,a_m)$
and $\ul{b} = (b_1,\ldots, b_m)$ elements of $\{1,\ldots,n\}^m$, we consider the quantum--minor
of $\Tsl(u)$, $\qmin{\ul{a}}{\ul{b}}{u}\in\Usl{n}[u,\hbar]$, defined as:
\begin{equation}\label{eq:qminsl}
\qmin{\ul{a}}{\ul{b}}{u} := \sum_{\sigma\in\fkS_m} (-1)^{\sigma}
\Tsl_{a_{\sigma(1)},b_1}(u_1)\cdots \Tsl_{a_{\sigma(m)},b_m}(u_m),
\end{equation}
where $u_j = u + \hbar(j-1)$.
The quantum--minors of $\Tsl(u)$ are studied in Section \ref{sec:qminsl}.
For each $1\leqslant k\leqslant n$, let $\Psl_k(u)$ be the principal $k\times k$
quantum--minor $\ds\Psl_k(u)=\qmin{1,\ldots,k}{1,\ldots,k}{u - \hhalf(k-1)}$.
Thus,
\begin{multline}\label{eq:polyP}
\Psl_k(u) := \sum_{\sigma\in\fkS_k} (-1)^{\sigma}
\Tsl_{\sigma(1),1}\lp u - \hhalf(k-1)\rp
\Tsl_{\sigma(2),2}\lp u - \hhalf(k-3)\rp \\ \cdots
\Tsl_{\sigma(k),k}\lp u + \hhalf(k-1)\rp.
\end{multline}

We prove in \S\ref{ssec:bethe} that the subalgebra generated in $\Usl{n}$
by the coefficients appearing in $\Psl_k(u)$, $1\leqslant k\leqslant n$, 
is maximal commutative and we denote by
$\roots{k}{1},\ldots,\roots{k}{k}$ the roots of $\Psl_k(u)$ defined 
in an appropriate splitting extension.

\subsection{Isomorphism $\varphi$}\label{ssec:iso}

Choose two formal series $G^{\pm}(x)\in 1+x\C[\negthinspace[x]\negthinspace]$ satisfying the following
two conditions:
\begin{equation}\label{eq:factorizationproblem}
\begin{split}
G^-(x) &= G^+(-x), \\
G^+(x)G^-(x) &= \frac{e^{x/2}-e^{-x/2}}{x}.
\end{split}
\end{equation}

Consider the following assignment $\varphi : \Uhsl{n}\to \hext{\Usl{n}}$.
\begin{itemize}
\item $\varphi(H_i) = h_i$ for each $i\in\bfI$.\\
\item For each $k\in\bfI$ we have
\begin{align}
\varphi(E_k) &= \frac{\hbar}{q-q^{-1}} \nonumber \\
& \sum_{i=1}^k
\lp
\frac{\prod_{a=1}^{k-1} G^+\lp\roots{k}{i}-\roots{k-1}{a}-\hhalf\rp
\prod_{b=1}^{k+1} G^+\lp\roots{k}{i} - \roots{k+1}{b}-\hhalf\rp}
{\prod_{c\neq i} G^+(\roots{k}{i} - \roots{k}{c})
G^+(\roots{k}{i} - \roots{k}{c}-\hbar)}
\rp \cdot \nonumber\\
& \ \ \ \lp
\sum_{j=1}^k (-1)^{k-j}
\frac{\qmin{1,\ldots,\wh{j},\ldots,k}{1,\ldots, k-1}
{\roots{k}{i}-\hhalf(k-1)}}
{\prod_{r\neq i} (\roots{k}{i} - \roots{k}{r})}
e_{j,k+1}
\rp \label{eq:phiEk},\\
\varphi(F_k) &= \frac{\hbar}{q-q^{-1}} \nonumber \\
& \sum_{i=1}^k
\lp
\frac{\prod_{a=1}^{k-1} G^-\lp\roots{k}{i}-\roots{k-1}{a}+\hhalf\rp
\prod_{b=1}^{k+1} G^-\lp\roots{k}{i} - \roots{k+1}{b}+\hhalf\rp}
{\prod_{c\neq i} G^-(\roots{k}{i} - \roots{k}{c})
G^-(\roots{k}{i} - \roots{k}{c}-\hbar)}
\rp \cdot \nonumber \\
& \ \ \ \lp
\sum_{j=1}^k (-1)^{k-j}
\frac{\qmin{1,\ldots, k-1}{1,\ldots,\wh{j},\ldots,k}
{\roots{k}{i}-\hhalf(k-3)}}
{\prod_{r\neq i} (\roots{k}{i} - \roots{k}{r})}
e_{k+1,j} \label{eq:phiFk}
\rp,
\end{align}
\end{itemize}
where $\wh{j}$ means that the index $j$ is omitted.\\

\begin{thm}\label{thm:main1}
The assignment $\varphi$ given above is an isomorphism of algebras
$\varphi:\Uhsl{n}\isom\hext{\Usl{n}}$, which satisfies $\varphi|_{\h}=\id_{\h}$ and
$\varphi=\id\mod\hbar$.
\end{thm}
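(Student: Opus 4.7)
The plan is to verify the theorem by realizing $\varphi$ as the composition $\ev \circ \Phi|_{\Uhsl{n}}$ described in the introduction, and unwinding both maps into closed form. Since $\Uhsl{n}$ and $\hext{\Usl{n}}$ are $\hbar$-adically complete, once $\varphi$ is shown to be an algebra morphism with $\varphi \equiv \id \bmod \hbar$, the fact that it is an isomorphism follows from the standard deformation-theoretic inversion by successive approximation in $\hbar$. The task therefore reduces to (i) showing that the formulas \eqref{eq:phiEk}--\eqref{eq:phiFk} arise from a genuine composition of algebra homomorphisms, and (ii) checking the congruences $\varphi|_{\h}=\id_\h$ and $\varphi\equiv\id\bmod\hbar$.

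First I would import from \cite{sachin-valerio-1} the homomorphism $\Phi: U_\hbar L\sl_n \to \wh{\Yhsl{n}}$, along with the explicit expressions for the Drinfeld--Jimbo generators $E_k, F_k, H_i$ of the subalgebra $\Uhsl{n} \hookrightarrow U_\hbar L\sl_n$ in terms of Drinfeld's new currents $\xi_i(u), x_k^\pm(u)$ of the Yangian. The formal series $G^\pm$ specified by \eqref{eq:factorizationproblem} are exactly the factors that appear in the monodromy problem underlying $\Phi$, so they transfer verbatim through the evaluation step. On the $\Phi$-side there is nothing new to prove beyond recalling and restricting these formulas.

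The heart of the argument is to rewrite the evaluation homomorphism $\ev : \Yhsl{n} \to \Usl{n}[\hbar]$ in Drinfeld's new presentation, so that it can be applied to $\Phi(E_k), \Phi(F_k), \Phi(H_i)$. In the $RTT$ presentation $\ev$ is given by $T(u) \mapsto \Tsl(u) = u\Id - \hbar\Tsl$, and the passage to the currents is achieved through the Gauss/quasideterminant decomposition of $\Tsl(u)$. This is precisely the content of Theorem \ref{thm:evishom}, proved in Section \ref{sec:eval}: it identifies the diagonal currents $\xi_i(u)$ with ratios of the principal quantum minors $\Psl_k(u)$ of \eqref{eq:polyP}, and the off-diagonal currents $x_k^\pm(u)$ with the row and column cofactor sums appearing in the second parentheses of \eqref{eq:phiEk}--\eqref{eq:phiFk}. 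The algebraic input required is the $RTT{=}TTR$ equation for $\Tsl(u)$ (Proposition \ref{pr:rtt}) and the ensuing commutation relations among quantum minors established in Section \ref{sec:qminsl}; the spectral decomposition of the maximal commutative subalgebra generated by the coefficients of $\Psl_k(u)$ (Proposition \ref{pr:bethe}) is what legitimizes the substitution $u = \roots{k}{i}$ inside the rational functions defining $\xi_i^{-1} \cdot x_k^\pm$, producing the residue prefactors $\prod_{r\neq i}(\roots{k}{i}-\roots{k}{r})^{-1}$ and the $G^\pm$-products visible in the final formulas.

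Composing $\ev$ in its Drinfeld-presentation form with $\Phi|_{\Uhsl{n}}$ then produces \eqref{eq:phiEk}--\eqref{eq:phiFk} directly, and the algebra-homomorphism property of $\varphi$ is inherited from those of $\Phi$ and $\ev$. The identity $\varphi|_\h=\id_\h$ is immediate because $\Phi$ sends $H_i$ to the zero mode of $\xi_i(u)$, which in turn evaluates to $h_i$ under $\ev$. The reduction $\varphi\equiv\id\bmod\hbar$ follows by inspecting each formula at leading order: the factors $G^\pm$ collapse to $1$ since $G^\pm(x)=1+O(x)$, the residue denominators cancel the Vandermonde implicit in the cofactor numerators, and the quantum minor of $\Tsl(u)$ degenerates modulo $\hbar$ to the classical minor of $u\Id$, so that the sum over $j$ collapses to the single term $e_{k,k+1}$ (respectively $e_{k+1,k}$). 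The principal obstacle is squarely Theorem \ref{thm:evishom}: once the evaluation homomorphism is rewritten on Drinfeld currents via quantum minors, the homomorphism property and the semiclassical normalization of $\varphi$ both fall out of the two halves of the composition, and the invertibility is automatic.
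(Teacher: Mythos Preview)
Your proposal is correct and follows essentially the same route as the paper: realize $\varphi$ as the composition $\ev\circ\Phi$, invoke Theorem~\ref{thm:main2} and Theorem~\ref{thm:evishom} for the homomorphism property, extract the explicit formulas via the partial-fraction computation (Corollary~\ref{cor:main2} together with Proposition~\ref{pr:partialfractions}), and conclude invertibility from $\varphi\equiv\id\bmod\hbar$ and flatness of the deformation. The only cosmetic difference is that the paper defines $\ev$ directly on the Drinfeld currents and verifies the Yangian relations (using the $\psi$--operators of Proposition~\ref{pr:psi-operator}) rather than first passing through the $RTT$/Gauss picture, but as the Remark following Theorem~\ref{thm:evishom} notes, the two viewpoints coincide.
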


\subsection{Remarks}\label{ssec:main1-rems}

\begin{enumerate}
\item 
The expressions given above of $\varphi(E_k)$
and $\varphi(F_k)$ belong to a splitting extension of $\{\Psl_j(u)\}_{1\leqslant j\leqslant n}$.
However, the proof of Theorem \ref{thm:main1} will highlight the fact that the 
right--hand sides of \eqref{eq:phiEk} and
\eqref{eq:phiFk} are symmetric in $\{\roots{j}{1},\ldots,\roots{j}{j}\}$ for each $j$,
and therefore live in $\hext{\Usl{n}}$.\\

\item For the formal series $G^{\pm}(x)$ satisfying \eqref{eq:factorizationproblem}, there
are two natural candidates. The first one,
used in \cite{sachin-valerio-1}, is $\ds G^{\pm}(x) = \lp \frac{e^{x/2}-e^{-x/2}}{x}\rp ^{\frac{1}{2}}$.
With this choice, the isomorphism $\varphi$ is defined over $\hext{\Q}$.
The second choice, implicitly used in \cite{sachin-valerio-2}, is 
$\ds G^{\pm}(x) = \frac{1}{\Gamma\lp 1\pm \frac{x}{2\pi\iota}\rp}$,
where $\Gamma$ is the Euler's gamma
function.
\end{enumerate}

\subsection{The case of $\sl_2$}\label{ssec:n=2}

For $n=2$, we have $\ds \Tsl(u) = \left[\begin{array}{cc} u-\hbar\cowt{} & -\hbar e_{12} \\
-\hbar e_{21} & u + \hbar\cowt{} \end{array}\right]$. Recall that here
$\cowt{} = h/2$. Thus we have 
$\Psl_1(u) = u-\hbar\cowt{}$ and 
\[
\Psl_2(u) = u^2 - \lp\hhalf\rp^2 (2C+1),
\]
where $C = e_{12}e_{21} + e_{21}e_{12} + h^2/2$ is the Casimir element of $\sl_2$.
As per our convention, we set $\Psl_0(u) = \Psl_3(u) = 1$. The roots of these
polynomials are: 
\[
\roots{1}{1} = \hbar\cowt{} \aand
\roots{2}{1}, \roots{2}{2} = \pm \hhalf \sqrt{2C+1}.
\]
Using Theorem \ref{thm:main1} we get the following
\begin{align*}
\varphi(E) &= \frac{\hbar}{q-q^{-1}} G^+\lp \hbar\cowt{} - \hhalf(1+\sqrt{2C+1})\rp
 G^+\lp \hbar\cowt{} - \hhalf(1-\sqrt{2C+1})\rp e_{12}, \\
\varphi(F) &= \frac{\hbar}{q-q^{-1}} G^-\lp \hbar\cowt{} + \hhalf(1+\sqrt{2C+1})\rp
 G^-\lp \hbar\cowt{} + \hhalf(1-\sqrt{2C+1})\rp e_{21}.
\end{align*}

Note that our isomorphism $\varphi$ differs from the one given in \cite[\S 6.4]{chari-pressley}.
To write their formulae, we have to make the following changes: the element $\ol{\Omega}$
from \cite[\S 6.4 {\bf B}]{chari-pressley} is $\ol{\Omega} = \frac{1}{4}(2C+1)$, and the deformation
parameter there denoted by $h$ is our $\hhalf$. With this in mind, the isomorphism
of \cite[Prop.~6.4.6]{chari-pressley}, denoted by $\phiCP$, given as follows:
$\phiCP(H) = h$, $\phiCP(F) = e_{21}$, and
\[
\phiCP(E) = 4\lp \frac{q^{\sqrt{2C+1}} + q^{-\sqrt{2C+1}} - q^{-1}K - qK^{-1}}
{(q-q^{-1})^2 (2C+2h-h^2)} \rp e_{12}.
\]

Though not essential, we give a direct proof that our $\varphi$ is an algebra homomorphism
for the $\sl_2$ case below, which is analogous to
the one in \cite[Prop.~6.4.6]{chari-pressley}.\\

The only non--trivial relation to verify is $\ds [\varphi(E),\varphi(F)] = \frac{K-K^{-1}}{q-q^{-1}}$,
where as usual we write $K = e^{\hhalf h}$. For this we use the fact that $C$ is central and
for any function $\mathcal{P}(\cowt{})$ we have $\mathcal{P}(\cowt{}-1) e_{12} = e_{12}\mathcal{P}(\cowt{})$,
$\mathcal{P}(\cowt{}+1) e_{21} = e_{21}\mathcal{P}(\cowt{})$. Let us write
$\alpha = \hbar\cowt{}-\hhalf$ and $\beta = \hhalf \sqrt{2C+1}$. Then,
\begin{align*}
\varphi(E)\varphi(F) &= \lp\frac{\hbar}{q-q^{-1}}\rp^2
G^+\lp \hbar\cowt{} - \hhalf(1+\sqrt{2C+1})\rp G^+\lp \hbar\cowt{} - \hhalf(1-\sqrt{2C+1})\rp \cdot \\
& \phantom{AAA}\cdot 
G^-\lp \hbar\cowt{} - \hhalf(1+\sqrt{2C+1})\rp G^-\lp \hbar\cowt{} - \hhalf(1-\sqrt{2C+1})\rp e_{12}e_{21} \\
&= \lp\frac{\hbar}{q-q^{-1}}\rp^2
\frac{e^{\alpha}+e^{-\alpha}-e^{\beta}-e^{-\beta}}{\alpha^2-\beta^2} e_{12}e_{21} \\
&= \frac{e^{\beta}+e^{-\beta}-e^{\alpha}-e^{-\alpha}}{(q-q^{-1})^2},
\end{align*}
where, the second equality follows from  $G^+(x)G^-(x) = (e^{x/2}-e^{-x/2})/x$ and
the last one from
\[
\hbar^2 e_{12}e_{21} = \hbar^2\lp\frac{C}{2} - \cowt{}^2 + \cowt{}\rp = \beta^2 - \alpha^2.
\]
Similarly, one gets
\[
\varphi(F)\varphi(E) = \frac{e^{\beta}+e^{-\beta}-e^{\gamma}-e^{-\gamma}}{(q-q^{-1})^2},
\]
where $\gamma = \hbar\cowt{} + \hhalf$.
Combining these, we obtain the desired identity as follows:
\begin{align*}
[\varphi(E),\varphi(F)] &= \frac{1}{(q-q^{-1})^2} \lp
e^{\gamma}+e^{-\gamma} - e^{\alpha}-e^{-\alpha} \rp \\
&= \frac{1}{(q-q^{-1})^2} \lp qK + q^{-1}K^{-1} - q^{-1}K - qK^{-1}\rp \\
&= \frac{K-K^{-1}}{q-q^{-1}}.
\end{align*}

\subsection{Evaluation homomorphism for $n=2$}\label{ssec:n=2ev}

Let us illustrate our main idea in the $n=2$ example. That is, we will now explain
how we obtained the expression of $\varphi$ in the previous subsection. For this,
we will have to write the algebra homomorphism $\ev : \Yhsl{2}\to \Usl{2}[\hbar]$
(see formulae \eqref{eq:evxi}, \eqref{eq:evx+} and \eqref{eq:evx-}):

\begin{align*}
\ev(\xi(u)) &= \frac{\Psl_2(u)}{\Psl_1\lp u+\frac{\hbar}{2}\rp \Psl_1\lp u-\frac{\hbar}{2}\rp}
 = \frac{u^2 - \lp \frac{\hbar}{2}\rp^2\lp 2C+1\rp}{\lp u-\frac{\hbar}{2}(h+1)\rp \lp u-\frac{\hbar}{2}(h-1)\rp}\ ,\\
\ev(x^+(u)) &= \lp u-\frac{\hbar}{2}(h-1)\rp^{-1}\cdot \hbar e_{12}\ , \\
\ev(x^-(u)) &= \lp u-\frac{\hbar}{2}(h+1)\rp^{-1}\cdot \hbar e_{21}\ . \\
\end{align*}

As explained in Corollary \ref{cor:main2}, if we write $\Psl_2(u)$ in its splitting
extension: 
$$
\Psl_2(u) = \lp u - \frac{\hbar}{2}\sqrt{2C+1}\rp \lp u + \frac{\hbar}{2} \sqrt{2C+1}\rp,
$$
we readily obtain the formulae for $\varphi(E)$ and $\varphi(F)$. This also highlights
the reason why the resulting composition is best expressed in terms of, while still independent of,
a choice of roots of the polynomials $\Psl_k(u)$.

\subsection{Proof of Theorem \ref{thm:main1}}\label{ssec:pf-main1}

The map $\varphi$ given in Section \ref{ssec:iso} is obtained via the following
composition, where $\Yhsl{n}$ is the Yangian of $\sl_n$ which is naturally an
$\N$--graded algebra, and $\wh{\Yhsl{n}}$ is its completion \wrt the $\N$--grading
(see Section \ref{ssec:ysln} below for the definition):
\[
\xymatrix@C=2cm{
\Uhsl{n} \ar[r]^{\Phi} \ar[dr]_{\varphi} & \wh{\Yhsl{n}} \ar[d]^{\ev}\\
& \hext{\Usl{n}}
}
\]
The expressions \eqref{eq:phiEk} and \eqref{eq:phiFk} are obtained by combining
Corollary \ref{cor:main2} with the explicit formulae for $\ev$ given in 
Proposition \ref{pr:partialfractions}. Thus the fact that $\varphi$ is an
algebra homomorphism follows from the corresponding assertions for $\Phi$
(proved in Theorem \ref{thm:main2}) and $\ev$ (Theorem \ref{thm:evishom}).\\

The reader can readily verify that modulo $\hbar$, $\varphi$ is the identity. Namely,
let $\ol{\varphi}$ be the induced map $\Uhsl{n}/\hbar\Uhsl{n} \to \Usl{n}$.
Then $\ol{\varphi}(E_k) = e_{k,k+1}$ and $\ol{\varphi}(F_k) = e_{k+1,k}$.
Since the quantum group $\Uhsl{n}$ is a flat deformation of $\Usl{n}$, this
implies that the algebra homomorphism $\varphi$ is in fact an isomorphism.


\section{The Yangian of $\sl_n$ and $\Uhsl{n}$}\label{sec:ysln}

In this section, we review the definition of the Yangian $\Yhsl{n}$, as given
in \cite{drinfeld-yangian-qaffine}. We also review the main construction
of \cite{sachin-valerio-1} yielding an algebra homomorphism between $\Uhsl{n}$
and the completion of $\Yhsl{n}$ \wrt its $\N$--grading.\\

\subsection{The Yangian of $\sl_n$}\label{ssec:ysln}

$\Yhsl{n}$ is a unital associative algebra over $\C[\hbar]$ generated
by $\{\xi_{i,r}, x_{i,r}^{\pm}\}_{r\in\N, i\in\bfI}$ subject to the following
relations
\begin{enumerate}
\item[(Y1)] For any $i,j\in\bfI$, $r,s\in\N$
\[[\xi_{i,r}, \xi_{j,s}] = 0.\]
\item[(Y2)] For $i,j\in\bfI$ and $s\in \N$
\[[\xi_{i,0}, x_{j,s}^{\pm}] = \pm a_{ij} x_{j,s}^{\pm}.\]
\item[(Y3)] For $i,j\in\bfI$ and $r,s\in\N$
\[[\xi_{i,r+1}, x^{\pm}_{j,s}] - [\xi_{i,r},x^{\pm}_{j,s+1}] =
\pm a_{ij}\hhalf (\xi_{i,r}x^{\pm}_{j,s} + x^{\pm}_{j,s}\xi_{i,r}).\]
\item[(Y4)] For $i,j\in\bfI$ and $r,s\in \N$
\[
[x^{\pm}_{i,r+1}, x^{\pm}_{j,s}] - [x^{\pm}_{i,r},x^{\pm}_{j,s+1}]=
\pm a_{ij}\hhalf (x^{\pm}_{i,r}x^{\pm}_{j,s} + x^{\pm}_{j,s}x^{\pm}_{i,r}).
\]
\item[(Y5)] For $i,j\in\bfI$ and $r,s\in \N$
\[[x^+_{i,r}, x^-_{j,s}] = \delta_{ij} \xi_{i,r+s}.\]
\item[(Y6)] Let $i\not= j\in\bfI$ and set $m = 1-a_{ij}$. For any
$r_1,\cdots, r_m\in \N$ and $s\in \N$
\[\sum_{\pi\in\mathfrak{S}_m}
\left[x^{\pm}_{i,r_{\pi(1)}},\left[x^{\pm}_{i,r_{\pi(2)}},\left[\cdots,
\left[x^{\pm}_{i,r_{\pi(m)}},x^{\pm}_{j,s}\right]\cdots\right]\right]\right.=0.\]
\end{enumerate}

Note that $\Yhsl{n}$ is a graded algebra, with $\deg(\hbar)=1$ and $\deg(y_{i,r}) = r$
for $y = \xi, x^{\pm}$.

\subsection{Formal currents}\label{ssec:yfields}

Define $\xi_i(u),x^\pm_i(u)\in
\Yhg[[u^{-1}]]$ by
\[\xi_i(u)=1 + \hbar\sum_{r\geqslant 0} \xi_{i,r}u^{-r-1}\aand
x^{\pm}_i(u)=\hbar\sum_{r\geqslant 0} x_{i,r}^{\pm} u^{-r-1}.\]

According to \cite[Prop.~2.3]{sachin-valerio-2}, the relations (Y1)--(Y5) are 
then equivalent to the following identities in $\Yhsl{n}[u,v;u^{-1},v^{-1}]]$.
\begin{enumerate}
\item[(\Y1)] For any $i,j\in\bfI$
\[[\xi_i(u), \xi_j(v)]=0.\]
\item[(\Y2)] For any $i,j\in \bfI$, and $a = \hbar a_{ij}/2$
\[(u-v\mp a)\xi_i(u)x_j^{\pm}(v)=
(u-v\pm a)x_j^{\pm}(v)\xi_i(u)\mp 2a x_j^{\pm}(u\mp a)\xi_i(u).\]
\item[(\Y3)] For any $i,j\in \bfI$, and $a = \hbar a_{ij}/2$
\begin{multline*}
(u-v\mp a) x_i^{\pm}(u)x_j^{\pm}(v)\\
= (u-v\pm a)x_j^{\pm}(v)x_i^{\pm}(u)
+\hbar\lp [x_{i,0}^{\pm},x_j^{\pm}(v)] - [x_i^{\pm}(u),x_{j,0}^{\pm}]\rp.
\end{multline*}
\item[(\Y4)] For any $i,j\in \bfI$
\[(u-v)[x_i^+(u),x_j^-(v)]=-\delta_{ij}\hbar\left(\xi_i(u)-\xi_i(v)\right).\]
\end{enumerate}

We also recall that the relation (Y6) follows from (Y1)--(Y5) and
the special case of (Y6) when all $r_1=\cdots=r_m=s=0$
\cite{levendorskii}. 

\begin{lem}\label{lem:Y2prime}
The relation (\Y2) is equivalent to the following
\begin{equation}\label{eq:Y2prime}
\tag{\Y 2$'$}
\Ad(\xi_i(u))^{-1}(x_j^{\pm}(v)) = \frac{u-v\mp a}{u-v\pm a}x_j^{\pm}(v)
\pm \frac{2a}{u-v\pm a} x_j^{\pm}(u\pm a),
\end{equation}
where as before $a = a_{ij}\hbar/2$.
\end{lem}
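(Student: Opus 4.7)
The plan is to prove the equivalence by a direct rational manipulation of (\Y2), combined with a specialization argument that extracts the auxiliary identity needed to reconcile the two formulations.

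For the implication (\Y2) $\Rightarrow$ (\Y2'), I would first left-multiply (\Y2) by $\xi_i(u)^{-1}$, which is well-defined in $\Yhsl{n}[[u^{-1}]]$ since $\xi_i(u) = 1 + O(u^{-1})$, and rearrange to
\[
(u-v\pm a)\,\Ad(\xi_i(u))^{-1}\!\lp x_j^{\pm}(v)\rp
= (u-v\mp a)\,x_j^{\pm}(v) \pm 2a\,\Ad(\xi_i(u))^{-1}\!\lp x_j^{\pm}(u\mp a)\rp.
\]
After dividing through by $(u-v\pm a)$, this already has the shape of (\Y2'), save that the last term contains $\Ad(\xi_i(u))^{-1}(x_j^{\pm}(u\mp a))$ in place of the desired $x_j^{\pm}(u\pm a)$.

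The key observation is that the missing identity $\Ad(\xi_i(u))^{-1}(x_j^{\pm}(u\mp a)) = x_j^{\pm}(u\pm a)$ is itself forced by (\Y2): specializing $v = u\pm a$ kills the first term on the right (since $u-v\pm a = 0$) and leaves $\mp 2a\,\xi_i(u)\,x_j^{\pm}(u\pm a) = \mp 2a\,x_j^{\pm}(u\mp a)\,\xi_i(u)$, which rearranges to exactly the required swap. Substituting this back yields (\Y2').

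For the converse, I would reverse the same steps: multiplying (\Y2') by $(u-v\pm a)$ and then left-multiplying by $\xi_i(u)$ reproduces (\Y2) up to replacing $x_j^{\pm}(u\mp a)\,\xi_i(u)$ by $\xi_i(u)\,x_j^{\pm}(u\pm a)$ in the final term. The identification of these two expressions is extracted from (\Y2') itself by specializing at $v = u\mp a$: now the numerator $u-v\mp a$ vanishes while the denominator $u-v\pm a = \pm 2a$ is invertible, so the formula collapses directly to $\Ad(\xi_i(u))^{-1}(x_j^{\pm}(u\mp a)) = x_j^{\pm}(u\pm a)$. No step poses a real difficulty; the only care needed is that (\Y2) and (\Y2') are formal identities in $\Yhsl{n}[u,v;u^{-1},v^{-1}]]$, so the substitutions $v = u\pm a$ must be read as expansions of $v^{-k}$ into formal power series in $u^{-1}$, which is routine.
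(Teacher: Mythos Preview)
Your proposal is correct and follows essentially the same approach as the paper: both arguments extract the auxiliary identity $\Ad(\xi_i(u))\,x_j^{\pm}(u\pm a)=x_j^{\pm}(u\mp a)$ by specializing the spectral parameter (setting $v=u\pm a$ in (\Y2), respectively $v=u\mp a$ in (\Y2$'$)), and then use this swap to pass between the two formulations. Your write-up is simply a more explicit version of the paper's terse proof, including the careful remark on how to interpret the specialization in the formal series ring.
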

\begin{pf}
Setting $v=u\pm a$ in (\Y2) we obtain
\[
\Ad(\xi_i(u))x_j^{\pm}(u\pm a) = x_j^{\pm}(u\mp a).
\]
Note that this relation can be similarly obtained from \eqref{eq:Y2prime}.
Using this identity we can deduce (\Y2) from \eqref{eq:Y2prime} and
vice versa.
\end{pf}

\subsection{Some elementary equivalences among relations of $\Yhsl{n}$}\label{ssec:reduction}

The following proposition will be used to reduce the list of relations
to be verified in order to obtain an algebra homomorphism from
$\Yhsl{n}$ to $\Usl{n}[\hbar]$.

\begin{prop}\label{pr:reduction}\hfill
\begin{enumerate}
\item 
Assuming the relation ($\mathcal{Y}$1), (\Y2) follows from the following
\begin{itemize}
\item The $i=j$ case of (\Y2)
(or, equivalently \eqref{eq:Y2prime}).
\item For $i\neq j$, either the following special case of (\Y2):
\[
\Ad(\xi_i(u))(x_{j,0}^{\pm}) = x_{j,0}^{\pm} \pm a_{ij} x_j^{\pm}(u\mp a_{ij}\hbar/2),
\]
or the analogous special case of \eqref{eq:Y2prime}:
\[
\Ad(\xi_i(u))^{-1}(x_{j,0}^{\pm}) = x_{j,0}^{\pm} \mp a_{ij} x_j^{\pm}(u\pm a_{ij}\hbar/2).
\]
\end{itemize}

\item Assuming (\Y1) and (\Y2), the relation (\Y3) follows from
\begin{itemize}
\item The following special case of (\Y3), for each $i,j\in\bfI$ such 
that $i=j$, or $a_{ij}=0$.
\[
[x_{i,0}^{\pm},x_j^{\pm}(u)] - [x_i^{\pm}(u),x_{j,0}^{\pm}] =
\mp \frac{a_{ij}}{2} (x_i^{\pm}(u)x_j^{\pm}(u) + x_j^{\pm}(u)x_i^{\pm}(u)).
\]
\item The relation (\Y3) for $j=i+1$.
\end{itemize}

\item Again assuming (\Y1) and (\Y2), the relation (\Y4) follows from its special
case: for each $i,j\in\bfI$
\[
[x_i^+(u),x_{j,0}^-] = \delta_{ij}(\xi_i(u)-1).
\]
\end{enumerate}
\end{prop}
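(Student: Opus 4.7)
The plan is to expand the generating-series relations (\Y2), (\Y3), (\Y4) as formal Laurent series in $v^{-1}$ and compare them coefficient by coefficient with the hypothesized special cases. In each part the special case corresponds to a single extracted coefficient (typically the constant term in $v$), and the remaining task is to propagate this base case through all higher orders. A unifying device is the recursion
\[
x_{j,s+1}^{\pm} \;=\; \pm\tfrac{1}{2}\,[\xi_{j,1},\, x_{j,s}^{\pm}] \;-\; \tfrac{\hbar}{2}\,\{\xi_{j,0},\, x_{j,s}^{\pm}\},
\]
which is an algebraic consequence of the $i=j$ case of (\Y2), obtained by equating low-order coefficients in $u$. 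It expresses every higher mode $x_{j,s}^{\pm}$ in terms of $x_{j,0}^{\pm}$ and the commuting family $\{\xi_{j,r}\}_{r\geq 0}$.

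For Part (1), I would first verify that the coefficient of $v^0$ in (\Y2), after multiplying through by the $v$-linear factor, reproduces the displayed special case exactly. For higher coefficients, extracting the coefficient of $v^{-s-1}$ from (\Y2) yields an identity expressing $[\xi_i(u),\, x_{j,s+1}^{\pm}]$ in terms of $[\xi_i(u),\, x_{j,s}^{\pm}]$ and $\{\xi_i(u),\, x_{j,s}^{\pm}\}$. Separately, applying $\ad(\xi_i(u))$ to the displayed recursion and using (\Y1) to move $\xi_i(u)$ past $\xi_{j,0}, \xi_{j,1}$ (together with the Jacobi identity) produces a second recursion for the same commutator. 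An algebraic check --- a polynomial identity in $u$ --- shows that the two recursions agree, so the induction closes with the special case as base. The equivalent form \eqref{eq:Y2prime} is handled symmetrically.

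For Part (2), I would exploit the symmetry of (\Y3) under the involution $(i,u) \leftrightarrow (j,v)$: the $j = i+1$ hypothesis then also covers $j = i-1$. For $i = j$ and for $|i - j| \geq 2$ the $u = v$ special case is given; I would argue by induction on the $v^{-1}$-order of (\Y3), extracting coefficients and reducing each coefficient identity to the specialization $u = v$ via the displayed recursion together with (\Y1) and (\Y2). The case $|i-j|\geq 2$ is especially clean, since (\Y2) already forces $[\xi_i(u), x_j^{\pm}(v)] = 0$, turning the analysis into a bookkeeping exercise in the commuting subalgebra generated by the $\xi_{k,r}$.

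For Part (3), the special case gives $[x_i^{+}(u),\, x_{j,0}^{-}]$ directly. I would then compute $[x_i^{+}(u),\, x_{j,s}^{-}]$ for $s \geq 1$ by substituting the displayed recursion for $x_{j,s}^{-}$ and repeatedly invoking (\Y2) to commute $\xi_{j,0}, \xi_{j,1}$ past $x_i^{+}(u)$; assembling the result into a generating series in $v^{-1}$ and matching with (\Y4) completes the argument. The main technical hurdle common to all three parts is the compatibility of two recursions --- the primary one implicit in each target relation, and the auxiliary one derived from the $i=j$ case of (\Y2) --- which ultimately reduces to elementary polynomial identities in the formal variables.
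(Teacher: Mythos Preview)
Your plan is viable and would lead to a correct proof, but it takes a genuinely different and more laborious route than the paper. You propose to expand each target relation mode-by-mode in $v^{-1}$ and climb the tower of coefficients by induction on $s$, using the shift operator $\mathcal{L}\colon Y\mapsto \pm\tfrac{1}{2}[\xi_{j,1},Y]-\tfrac{\hbar}{2}\{\xi_{j,0},Y\}$ (derived from the $i=j$ case of (\Y2)) to raise the mode index. The ``compatibility of two recursions'' you flag as the main hurdle does collapse: since $\mathcal{L}$ commutes with left and right multiplication by $\xi_i(u)$ (by (\Y1)), the discrepancy $D_s$ between the two recursions satisfies $D_{s+1}=\mathcal{L}(D_s)$, so only the single identity $D_0=0$ needs checking, and it holds. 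The paper sidesteps all of this by staying at the generating-function level. For Part~(1) it computes $\Ad(\xi_i(u)\xi_j(v))\,x_{k,0}^{\pm}$ in two orders using (\Y1), obtaining a functional equation expressing $\Ad(\xi_i(u))(x_k^{\pm}(v'))$ directly in terms of $\Ad(\xi_j(v))(x_k^{\pm}(u'))$; taking $j=k$ then reduces the general case to the assumed $i=j$ case in one stroke. For Part~(3) (and, by the same argument, Part~(2)) the paper applies $\Ad(\xi_j(v))$ to the hypothesized special case and uses (\Y2) to evaluate the conjugation of each factor, producing the full relation as a closed-form identity with no induction. Your approach buys a mechanical, coefficient-by-coefficient verification that could in principle be automated; the paper's buys brevity and makes the role of the commutativity (\Y1) completely transparent.
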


\begin{pf}
We begin by proving (\Y2) assuming its special cases listed in (1) above hold. 
Let $i,j,k\in\bfI$ and assume
that we know the following relations from (S3)
\begin{align*}
\Ad(\xi_i(u))(x_{k,0}^{\pm}) &= x_{k,0}^{\pm} \pm a_{ik} x_k^{\pm}(u\mp a_{ik}\hbar/2),\\
\Ad(\xi_j(u))(x_{k,0}^{\pm}) &= x_{k,0}^{\pm} \pm a_{jk} x_k^{\pm}(u\mp a_{jk}\hbar/2).
\end{align*}
Now we compute $\Ad(\xi_i(u)(\xi_j(v))x_{k,0}^{\pm}$ in two different ways,
since we know $\xi_i(u)$ and $\xi_j(v)$ commute, from (\Y1).
\begin{align*}
\Ad(\xi_i(u)\xi_j(v))(x_{k,0}^{\pm}) &=
\Ad(\xi_i(u))\lp x_{k,0}^{\pm} \pm a_{jk}x_k^{\pm}(v\mp a_{jk}\hbar/2)\rp \\
&= x_{k,0}^{\pm}\pm a_{ik} x_k^{\pm}(u\mp a_{ik}\hbar/2)
\pm a_{jk}\Ad(\xi_i(u))(x_k^{\pm}(v\mp a_{jk}\hbar/2)).
\end{align*}
Similarly we get
\begin{align*}
\Ad(\xi_j(v)\xi_i(u))(x_{k,0}^{\pm}) &=
\Ad(\xi_j(v))\lp x_{k,0}^{\pm} \pm a_{ik}x_k^{\pm}(u\mp a_{ik}\hbar/2)\rp \\
&= x_{k,0}^{\pm}\pm a_{jk} x_k^{\pm}(v\mp a_{jk}\hbar/2)
\pm a_{ik}\Ad(\xi_j(v))(x_k^{\pm}(u\mp a_{ik}\hbar/2)).
\end{align*}
Combining we obtain the following equation
\[
a_{jk}((\Ad(\xi_i(u))-1)(x_k^{\pm}(v\mp a_{jk}\hbar/2))) = 
a_{ik}((\Ad(\xi_j(v))-1)(x_k^{\pm}(u\mp a_{ik}\hbar/2))).
\]
The conclusion is that if we know $\Ad(\xi_i(u))(x_k^{\pm}(v))$ for some $i\in\bfI$
so that $a_{ik}\neq 0$, then we can compute $\Ad(\xi_j(u))(x_k^{\pm}(v))$ for any $j\in\bfI$.
(1) asserts exactly that we know (\Y2) for one such pair and we are done.\\

The proof of the remaining relations uses (\Y2) which will be assumed. 
For instance, let us prove (\Y4) from its special cases given in (3). 
The proof of (2) is entirely analogous
and is skipped here.\\

Apply $\Ad(\xi_j(v))$ to both sides of 
\[
[x_i^+(u),x_{j,0}^-] = \delta_{ij} (\xi_i(u)-1).
\] 
Using (\Y1), the right--hand
side does not change, while the left hand side can be computed as follows (where
$a=a_{ij}\hbar/2$):
\begin{align*}
\Ad(\xi_j(v))([x_i^+(u),x_{j,0}^-]) &= \left[
\frac{v-u+a}{v-u-a}x_i^+(u) - \frac{2a}{v-u-a}x_i^+(v-a),
x_{j,0}^- - 2x_j^-(v+\hbar)\right].
\end{align*}

Now, for $i\neq j$ we get
\[
(v-u+a)[x_i^+(u), x_j^-(v+\hbar)] = 2a [x_i^+(v-a),x_j^-(v+\hbar)].
\]
Set $u=v+a$ in the equation above to see that its \rhs must be zero. Thus
so must be its \lhs and we obtain (\Y4).\\

Assuming $i=j$, we can drop the subscript $i$ and note that $a = \hbar$. We have
\begin{align*}
\Ad(\xi(v))([x^+(u),x_{0}^-]) &= 
\frac{v-u+\hbar}{v-u-\hbar}(\xi(u)-1) - \frac{2\hbar}{v-u-\hbar}(\xi(v-\hbar)-1) \\
&\phantom{=}
-2\frac{v-u+\hbar}{v-u-\hbar}[x^+(u),x^-(v+\hbar)]
+\frac{4\hbar}{v-u-\hbar}[x^+(v-\hbar),x^-(v+\hbar)].
\end{align*}
Setting this equal to $\xi(u)-1$ we get the following equation, after clearing
the denominator and cancelling a factor of $2$:
\begin{equation}\label{eq:pfred1}
(u-v-\hbar)[x^+(u),x^-(v+\hbar)]+2\hbar[x^+(v-\hbar),x^-(v+\hbar)]
= \hbar(\xi(v-\hbar)-\xi(u)).
\end{equation}
Set $u=v+\hbar$ to get $2\hbar[x^+(v-\hbar),x^-(v+\hbar)]=\hbar(\xi(v-\hbar)-\xi(v+\hbar))$.

Now replace the commutator $[x^+(v-\hbar),x^-(v+\hbar)]$
in \eqref{eq:pfred1} by this to get
\[
(u-v-\hbar)[x^+(u),x^-(v+\hbar)] = \hbar(\xi(v+\hbar)-\xi(u)),
\]
which is exactly (\Y4) for $i=j$.
\end{pf}

\subsection{Homomorphism $\Phi : \Uhsl{n}\to\wh{\Yhsl{n}}$}\label{ssec:homPhi}

Now let $\wh{\Yhsl{n}}$ be the completion of $\Yhsl{n}$ \wrt its $\N$--grading.
Again let $G^{\pm}(x)$ be two formal series in $1+x\C[\negthinspace[x]\negthinspace]$ satisfying
\eqref{eq:factorizationproblem}. \\

Following \cite[\S 2.9]{sachin-valerio-1}, we define for each $i\in\bfI$:
\begin{equation}\label{eq:Bti}
\begin{split}
t_i(u) &= \hbar\sum_{r\in\N} t_{i,r}u^{-r-1} := \log(\xi_i(u)),\\
B_i(v) &= \hbar\sum_{r\in\N} t_{i,r} \frac{v^r}{r!}.
\end{split}
\end{equation}

Let $Y^0$ be the subalgebra of $\Yhsl{n}$ generated by $\{\xi_{i,r}\}_{i\in\bfI,r\in\N}$.
Define $g_i^{\pm}(u) = \sum_{m\in\N} g^{\pm}_{i,m}u^m \in \wh{Y^0[u]}$ by 
\begin{equation}\label{eq:gpm}
g_i^{\pm}(u) := \frac{1}{G^+(\hbar)}\exp\lp B_i(-\partial_v)\cdot
\frac{d}{dv} \lp \log(G^{\pm}(v))\rp \rp.
\end{equation}

\begin{thm}\label{thm:main2}
The assignment $\Phi(H_i) = \xi_{i,0}$ and 
\[
\Phi(E_i) = \sum_{m\in\N} g_{i,m}^+ x_{i,m}^+ \aand
\Phi(F_i) = \sum_{m\in\N} g_{i,m}^- x_{i,m}^-
\]
defines an algebra homomorphism $\Phi : \Uhsl{n}\to \wh{\Yhsl{n}}$.
\end{thm}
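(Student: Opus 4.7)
The plan is to verify the four defining relations (QG1)--(QG4) of $\Uhsl{n}$ on the generators $\Phi(H_i), \Phi(E_i), \Phi(F_i)$. Since every coefficient $g^{\pm}_{i,m}$ lies in the commutative subalgebra $\wh{Y^0}$, relations (QG1) and (QG2) reduce immediately to the Yangian relations (Y1) and (Y2) evaluated at the zero mode: for instance,
\begin{align*}
[\xi_{i,0}, \Phi(E_j)] = \sum_{m\in\N} g^+_{j,m}\,[\xi_{i,0}, x^+_{j,m}] = a_{ij}\,\Phi(E_j),
\end{align*}
and analogously for $[\xi_{i,0}, \Phi(F_j)]$, using that the $g^\pm_{j,m}$ commute with $\xi_{i,0}$.

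The heart of the matter is (QG3). Using the commutativity of $\wh{Y^0}$ together with (Y5), the commutator is
\begin{align*}
[\Phi(E_i), \Phi(F_j)] \;=\; \delta_{ij}\sum_{m,n\in\N} g^+_{i,m}\, g^-_{i,n}\, \xi_{i,m+n}.
\end{align*}
For $i \neq j$ this vanishes, matching (QG3). For $i = j$, the convolution pairing above can be rewritten as the formal residue
\begin{align*}
\sum_{m,n\in\N} g^+_{i,m}\, g^-_{i,n}\, \xi_{i,m+n} \;=\; \Res{u=0}{g^+_i(u)\,g^-_i(u)\,\tfrac{\xi_i(u)-1}{\hbar}\,du}.
\end{align*}
Unpacking the definition \eqref{eq:gpm}, the product $g^+_i(u) g^-_i(u)$ is of the form $G^+(\hbar)^{-2}\exp\lp B_i(-\partial_u)\cdot \tfrac{d}{du}\log(G^+(u)G^-(u))\rp$, which by the factorization \eqref{eq:factorizationproblem} involves only the universal function $(e^{u/2}-e^{-u/2})/u$, independent of the choice of $G^\pm$. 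Combined with the exponential relation $\xi_i(u) = \exp t_i(u)$ and a Borel-transform argument, this identifies the residue with $(e^{\hbar\xi_{i,0}/2} - e^{-\hbar\xi_{i,0}/2})/(q-q^{-1})$, which is precisely $\Phi\lp(K_i - K_i^{-1})/(q - q^{-1})\rp$.

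For the $q$-Serre relations (QG4), the strategy is again to pass to generating functions: the $q$-commutator expression translates into an identity relating the currents $x^{\pm}_i(u)$ modulated by $g^{\pm}_i(u)$. Expanding the normal-ordered products with $(\mathcal{Y}3)$ converts the $q$-brackets into ordinary commutators, and the desired vanishing reduces to the classical Serre relation (Y6) in the Yangian, with the $\hbar$-correction factors supplied precisely by the modification $g^{\pm}_i(u)$.

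The main obstacle is the residue computation in (QG3) at $i = j$: one must show that the higher-mode contributions from $\xi_{i,r}$ with $r > 0$ cancel exactly, leaving only the exponential of the zero mode $\xi_{i,0}$. This non-trivial cancellation is precisely what the specific factorization $G^+(x)G^-(x) = (e^{x/2}-e^{-x/2})/x$ is engineered to produce; it constitutes the main technical content of the construction of $\Phi$ in \cite{sachin-valerio-1} that is being reviewed here.
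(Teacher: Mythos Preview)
Your argument has a genuine gap at the first non-trivial step. In your computation of (QG3) you write
\[
[\Phi(E_i), \Phi(F_j)] \;=\; \delta_{ij}\sum_{m,n\in\N} g^+_{i,m}\, g^-_{i,n}\, \xi_{i,m+n},
\]
justifying this by ``the commutativity of $\wh{Y^0}$ together with (Y5)''. But the coefficients $g^{\pm}_{i,m}$, while they lie in the commutative subalgebra $\wh{Y^0}$, do \emph{not} commute with the raising and lowering operators $x^{\mp}_{j,n}$: the relation (\Y2) (equivalently (Y3)) says precisely that $\xi_i(u)$ and $x^{\pm}_j(v)$ have a non-trivial commutation. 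Thus in $\Phi(E_i)\Phi(F_j)=\sum_{m,n} g^+_{i,m} x^+_{i,m} g^-_{j,n} x^-_{j,n}$ you cannot simply slide $g^-_{j,n}$ past $x^+_{i,m}$, and the displayed formula is false in general. The same issue contaminates your treatment of (QG4).

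The paper handles exactly this obstruction by invoking the machinery of \cite{sachin-valerio-1}: algebra homomorphisms $\lambda_i^{\pm}(u):Y^0\to Y^0[u]$ are introduced which encode the effect of commuting elements of $Y^0$ past $x^{\pm}_i$-type operators (see \eqref{eq:pf-main2-1}). The verification that $\Phi$ is a homomorphism then reduces to three compatibility conditions (A), ($\wt{B}$), (C) on the $g_i^{\pm}$ involving these $\lambda$-twists; in particular condition ($\wt{B}$) reads $g_i^+(v)\,\lambda_i^+(v)(g_i^-(v)) = \frac{\hbar}{q-q^{-1}}\exp\bigl(\cdots\bigr)$, not $g_i^+(v)g_i^-(v)=\cdots$ as your residue computation implicitly assumes. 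The factorization \eqref{eq:factorizationproblem} is indeed what makes these conditions hold, so your intuition about its role is correct, but the mechanism by which it enters is through the $\lambda$-twisted products, not through a naive product of generating series.
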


\begin{pf}
In \cite[Prop.~2.10]{sachin-valerio-1} certain algebra homomorphisms
$\lambda_i^{\pm}(u) = \sum_{r\in\N} \lambda^{\pm}_{i;r} u^r : Y^0
\to Y^0[u]$ are constructed so that
\begin{equation}\label{eq:pf-main2-1}
\lambda_i^{\pm}(v_1)B_j(v_2) = B_j(v_2) \mp \frac{e^{\hhalf a_{ij} v_2}
-e^{-\hhalf a_{ij}v_2}}{v_2} e^{v_1v_2}.
\end{equation}
Note that we are in a simply--laced case, so we don't need to introduce the symmetrizing
integers present in (7) of \cite[Prop.~2.10]{sachin-valerio-1}. The necessary and sufficient
conditions prescribed in \cite[Thm.~3.4, \S 4.7]{sachin-valerio-1} for $\Phi$ to be an
algebra homomorphism are:
\begin{itemize}
\item[(A)] For each $i,j\in\bfI$
\[
g_i^+(u)\lambda_i^+(u)(g_j^-(v)) = g_j^-(v)\lambda_j^-(v)(g_i^+(u)).
\]
\item[($\wt{B}$)] For every $i\in\bfI$, we have
\[
g_i^+(v)\lambda_i^+(v)(g_i^-(v)) = \frac{\hbar}{q-q^{-1}}\exp\lp
B_i(-\partial_v)\partial_v \cdot
\log\lp \frac{e^{v/2}-e^{-v/2}}{v}\rp \rp.
\]
\item[(C)] For each $i,j\in\bfI$, we have
\[
g_i^{\pm}(u)\lambda_i^{\pm}(u)(g_j^{\pm}(v))\frac{e^u-e^{v\pm a}}{u-v\mp a} = 
g_j^{\pm}(v)\lambda_j^{\pm}(v)(g_i^{\pm}(u))\frac{e^{u\pm a}-e^v}{u-v\pm a}. 
\]
\end{itemize}

Thus we need to compute $\lambda_i^{\epsilon_1}(u)(g_j^{\epsilon_2}(v))$ for each 
$i,j\in\bfI$ and $\epsilon_1,\epsilon_2\in\{\pm\}$. For this we have the following\\

\noindent {\bf Claim.} Let $a = \hhalf a_{ij}$. Then we have 
\[ 
\lambda_i^{\epsilon_1}(u)(g_j^{\epsilon_2}(v)) = 
g_j^{\epsilon_2}(v) \lp\frac{G^{\epsilon_2}(v-u-a)}{G^{\epsilon_2}(v-u+a)}\rp^{\epsilon_1 1}.
\]

Given the claim we can prove that the equations (A), ($\wt{B}$) and (C) hold, as follows.\\

\noindent {\em Proof of (A).} This equation becomes
\[
\frac{G^-(v-u-a)}{G^-(v-u+a)} = \frac{G^+(u-v-a)}{G^+(u-v+a)},
\]
which is true since $G^-(x) = G^+(-x)$ as per \eqref{eq:factorizationproblem}.\\

\noindent {\em Proof of ($\wt{B}$).} The \lhs of condition ($\wt{B}$) can be computed
using the claim above:
\begin{align*}
g_i^+(v)\lambda_i^+(v)(g_i^-(v)) &= g_i^+(v)g_i^-(v)\frac{G^+(\hbar)}{G^-(\hbar)} \\
&= \frac{1}{G^+(\hbar)G^-(\hbar)}\exp\lp B_i(-\partial_v)\partial_v\cdot
\log\lp G^+(x)G^-(x)\rp \rp \\
&= \frac{\hbar}{q-q^{-1}} \exp\lp B_i(-\partial_v)\partial_v\cdot
\log\lp \frac{e^{v/2}-e^{-v/2}}{v}\rp \rp,
\end{align*}
where we used that $G^+(x)G^-(x) = (e^{x/2}-e^{-x/2})/v$ as required in
\eqref{eq:factorizationproblem}.\\

\noindent {\em Proof of (C).} This condition (for the $+$ case) takes the following form:
\[
\frac{G^+(v-u-a)}{G^+(v-u+a)} \frac{e^u - e^{v+a}}{u-v-a}
= \frac{G^+(u-v-a)}{G^+(u-v+a)} \frac{e^{u+a} - e^v}{u-v+a},
\]
which again follows from \eqref{eq:factorizationproblem}.\\

It remains to prove the claim above. Let us take $\epsilon_1 = +$ and $\epsilon_2 = -$
for definiteness, and as usual let $a = \hhalf a_{ij}$. Then we get
\begin{align*}
\lambda_i^+(u)(g_j^-(v)) &= G^+(\hbar)^{-1} \exp \lp
\lp
B_j(-\partial_v)
- \frac{e^{-a\partial_v}-e^{a\partial_v}}{(-\partial_v)}
e^{-u\partial_v}
\rp\cdot
\partial_v \log(G^-(v))
\rp\\
&= g_j^-(v) \exp\lp \lp e^{-\partial_v(u+a)} - e^{-\partial_v(u-a)}\rp
\cdot \log(G^-(v))\rp \\
&= g_j^-(v)\frac{G^-(v-u-a)}{G^-(v-u+a)}
\end{align*}
as claimed.
\end{pf}

\subsection{Composition of a graded algebra homomorphism
with $\Phi$}\label{ssec:cor-main2}

Let us fix $i\in\bfI$ and consider the following situation.
Assume $A$ is an $\N$--graded, unital algebra over $\C[\hbar]$, and assume that we are given a homomorphism
of graded algebras
$\eta : \Yhsl{n} \to A$ such that
\begin{itemize}
\item $\eta(\xi_i(u))$ is expansion in $u^{-1}$ of a rational function of the form:
\[
\eta(\xi_i(u)) = \prod_{k=1}^N \frac{u-a_k}{u-b_k},
\]
where $a_k,b_k\in A$ are homogeneous elements of degree $1$, for $1\leqslant k\leqslant N$.

\item $\eta(x_i^{\pm}(u))$ are again rational functions of the form:
\[
\eta(x_i^{\pm}(u)) = \sum_{\ell=1}^M \frac{\hbar}{u-c^{\pm}_{\ell}} B_{\ell}^{\pm},
\]
where $c^{\pm}_{\ell} \in A$ are of degree $1$ and $B_{\ell}^{\pm}\in A$ are of degree 0.
\end{itemize}

\begin{cor}\label{cor:main2}
The composition $\eta\circ \Phi : \Uhsl{n} \to \wh{A}$ maps $E_i, F_i$ to the following:
\begin{align*}
E_i &\mapsto \frac{1}{G^+(\hbar)} \sum_{\ell=1}^M \lp\prod_{k=1}^N 
\frac{G^+(c^+_{\ell}-a_k)}{G^+(c^+_{\ell}-b_k)}\rp B_{\ell}^+,\\
F_i &\mapsto \frac{1}{G^+(\hbar)} \sum_{\ell=1}^M \lp\prod_{k=1}^N 
\frac{G^-(c^-_{\ell}-a_k)}{G^-(c^-_{\ell}-b_k)}\rp B_{\ell}^-.
\end{align*}
\end{cor}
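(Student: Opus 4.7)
The plan is to evaluate the two ingredients of $\Phi$, namely $g_i^{\pm}(u)$ and $x_i^{\pm}(u)$, separately under $\eta$, and then combine them by a simple interchange-of-summation / substitution argument. The central observation is that the infinite sum $\sum_{m}g_{i,m}^{\pm}x_{i,m}^{\pm}$ pairs the Taylor coefficients of the power series $g_i^{\pm}(u)$ with the Laurent coefficients at $\infty$ of the rational function $x_i^{\pm}(u)$, and so, after applying $\eta$, this pairing collapses into an evaluation of $\eta(g_i^{\pm})$ at each pole of $\eta(x_i^{\pm})$.

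First, I would compute $\eta(t_i(u))=\log\eta(\xi_i(u))=\sum_{k}\bigl(\log(u-a_k)-\log(u-b_k)\bigr)$. Reading off coefficients in the expansion $\log(u-c)=\log u-\sum_{s\geqslant 1}\frac{c^s}{su^s}$ gives $\hbar\,\eta(t_{i,r})=\sum_k\frac{b_k^{r+1}-a_k^{r+1}}{r+1}$, and then repackaging this into the generating series $B_i(v)$ yields the clean identity
\[
\eta(B_i(v))=\sum_k\frac{e^{b_k v}-e^{a_k v}}{v}.
\]
Substituting $v\mapsto -\partial_v$ turns $(e^{b_kv}-e^{a_kv})/v$ into the formal operator $(e^{-b_k\partial_v}-e^{-a_k\partial_v})/(-\partial_v)$. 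Applied to $\frac{d}{dv}\log G^{\pm}(v)$ and using that $\partial_v^{-1}$ is antidifferentiation while $e^{-c\partial_v}$ is the shift $v\mapsto v-c$, a one-line calculation gives
\[
\eta(B_i(-\partial_v))\cdot\tfrac{d}{dv}\log G^{\pm}(v)=\sum_k\log\frac{G^{\pm}(v-a_k)}{G^{\pm}(v-b_k)}.
\]
Exponentiating and using \eqref{eq:gpm}, one obtains (after renaming $v$ to $u$)
\[
\eta(g_i^{\pm}(u))=\frac{1}{G^+(\hbar)}\prod_{k=1}^{N}\frac{G^{\pm}(u-a_k)}{G^{\pm}(u-b_k)}.
\]

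Next, expanding $\eta(x_i^{\pm}(u))=\sum_\ell\frac{\hbar B_\ell^{\pm}}{u-c_\ell^{\pm}}$ as a Laurent series at $u=\infty$ gives $\eta(x_{i,r}^{\pm})=\sum_{\ell}(c_\ell^{\pm})^r B_\ell^{\pm}$. Plugging into $\Phi(E_i)$ and $\Phi(F_i)$,
\[
\eta(\Phi(E_i))=\sum_m\eta(g_{i,m}^+)\,\eta(x_{i,m}^+)=\sum_\ell\Bigl(\sum_m\eta(g_{i,m}^+)(c_\ell^+)^m\Bigr)B_\ell^+=\sum_\ell\eta(g_i^+)(c_\ell^+)\,B_\ell^+,
\]
and likewise for $F_i$. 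Substituting $u=c_\ell^{\pm}$ into the product formula above gives the two asserted identities.

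The only subtlety—rather than a real obstacle—will be justifying the formal manipulations. The operator $\eta(B_i(-\partial_v))$ has to be interpreted as acting term by term on power series in $v$ (each fixed power of $\partial_v$ acting in a well-defined way), and the evaluation $u\mapsto c_\ell^{\pm}$ has to be performed inside the completion $\widehat{A}$ with respect to the $\N$-grading: since $c_\ell^{\pm}$ is homogeneous of degree $1$ and $\eta(g_{i,m}^{\pm})$ has degree $m$, the substituted series converges in $\widehat{A}$. Once this bookkeeping is in place the computation is essentially the manipulation above.
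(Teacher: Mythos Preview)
Your proof is correct and follows essentially the same route as the paper's own argument: compute $\eta(t_{i,r})$ from the factored form of $\eta(\xi_i(u))$, feed this into the definition of $g_i^{\pm}$ via the shift-operator identity $e^{-c\partial_v}f(v)=f(v-c)$ to obtain $\eta(g_i^{\pm}(u))=G^+(\hbar)^{-1}\prod_k G^{\pm}(u-a_k)/G^{\pm}(u-b_k)$, read off $\eta(x_{i,m}^{\pm})=\sum_\ell (c_\ell^{\pm})^m B_\ell^{\pm}$ from the partial-fraction form, and then resum. The only cosmetic difference is that you pass through the closed form $\eta(B_i(v))=\sum_k(e^{b_kv}-e^{a_kv})/v$ before substituting $v\mapsto -\partial_v$, whereas the paper substitutes the coefficients $\eta(t_{i,r})$ directly into the exponential; the computations are otherwise identical.
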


\begin{pf}
The proof follows a computation similar to the one given in \cite[Section 4.6]{sachin-valerio-1}. Since
$\ds \eta(\xi_i(u)) = \prod_{k=1}^N \frac{u-a_k}{u-b_k}$, we get
\[
\eta(t_i(u)) = \sum_{k=1}^N \log(1-a_ku^{-1}) - \log(1-b_ku^{-1}) = \sum_{k=1}^N 
\lp \sum_{r\geqslant 0} \frac{b_k^{r+1}-a_k^{r+1}}{r+1} u^{-r-1}\rp.
\]
Thus $\hbar\eta(t_{i,r}) = \ds \sum_{k=1}^N \frac{b_k^{r+1}-a_k^{r+1}}{r+1}$. This implies
\begin{align*}
\eta(g_i^{\pm}(u)) &= G^+(\hbar)^{-1} \exp\lp
\sum_{r\geqslant 0} (-1)^r \lp \sum_{k=1}^N \frac{b_k^{r+1}-a_k^{r+1}}{(r+1)!} \rp \partial_u^{r+1} \log(G^{\pm}(u))
\rp\\
&= G^+(\hbar)^{-1} \exp\lp \lp\sum_{k=1}^N e^{-a_k\partial_u} - e^{-b_k\partial_u}\rp \log(G^{\pm}(u))\rp \\
&= G^+(\hbar)^{-1} \exp\lp \sum_{k=1}^N \log(G^{\pm}(u-a_k)) - \log(G^{\pm}(u-b_k))\rp \\
&= G^+(\hbar)^{-1} \prod_{k=1}^N \frac{G^{\pm}(u-a_k)}{G^{\pm}(u-b_k)}.
\end{align*}
Finally from the expression of $\eta(x_i^{\pm}(u)$ we get that 
$\eta(x_{i,m}^{\pm}) = \ds \sum_{\ell=1}^M (c_{\ell}^{\pm})^m B_{\ell}^{\pm}$.
Substituting this in the formula for $\Phi(E_i)$ and $\Phi(F_i)$ 
given in Theorem \ref{thm:main2}, we get
\begin{align*}
\sum_{m\geqslant 0} g_{i,m}^{\pm}x_{i,m}^{\pm} &= \sum_{\ell=1}^M \lp\sum_{m\geqslant 0} g_{i,m}^{\pm}(c_{\ell}^{\pm})^m\rp B_{\ell}^{\pm} 
= \sum_{\ell=1}^M g_i^{\pm}(c_{\ell}^{\pm}) B_{\ell}^{\pm} \\
&= G^+(\hbar)^{-1} \sum_{\ell=1}^M \lp \prod_{k=1}^N \frac{G^{\pm}(c_\ell^\pm-a_k)}{G^{\pm}(c_\ell^\pm-b_k)}\rp B_\ell^\pm
\end{align*}
as claimed.
\end{pf}


\section{$\RTT$ relations and determinant identities}\label{sec:qminsl}

In this section, we study the algebraic properties of the matrix $\Tsl(u)$.
We show that it satisfies the $RTT$ relations and obtain commutation
relations between quantum minors of $\Tsl(u)$. In particular, we prove
the \emph{Capelli identity}
for $\sl_n$, \ie the coefficients of the quantum--determinant of $\Tsl(u)$ 
generate the center of $U\sl_n$.

\subsection{$RTT$ relations}\label{ssec:RTT-sl}

Let $\Tsl(u)$ be the $n\times n$ matrix with coefficients
from $\Usl{n}[\hbar,u]$ as defined in Section \ref{ssec:transfer}.

We view this matrix as an element of $\aux{n}\ten\Usl{n}[u,\hbar]$ as follows.
Let $\{\bs{i}\}_{1\leqslant i\leqslant n}$ be the standard basis of $\C^n$ and let
$\elm{i}{j}$ be the elementary matrix defined as:
$\elm{i}{j}\bs{k} = \delta_{jk}\bs{i}$. Then
\begin{equation}
\Tsl(u)=\sum_{i,j}\elm{i}{j}\ten \Tsl_{ij}(u).
\end{equation}

Thus, we have $\Tsl(u)\bs{j}=\sum_i \bs{i}\ten \Tsl_{ij}(u)$. Let $P\in\auxaux{n}$ 
be the flip of the tensor factors, and let $R(u)=u\Id + \hbar P$ be the Yang's $R$--matrix.
The following (Yang--Baxter) equation holds in $\haux{n}{3}[\hbar,u,v]$:
\begin{equation}\label{eq:ybe}
\tag{YBE}
R_{12}(u)R_{13}(u+v)R_{23}(v) = R_{23}(v)R_{13}(u+v)R_{12}(u),
\end{equation}
where, as usual, the subscripts indicate which tensor factors $R$ acts on.

\begin{prop}\label{pr:rtt}
Set 
\[\Tsl_1(u)=\sum_{i,j} \elm{i}{j}\ten 1\ten \Tsl_{ij}(u)
\aand \Tsl_2(v)=\sum_{i,j}1\ten \elm{i}{j}\ten \Tsl_{ij}(v)
\]
in $\auxaux{n}\ten\Usl{n}[\hbar,u,v]$. Then,
\begin{equation}\label{eq:RXX}
R(u-v)\Tsl_1(u)\Tsl_2(v)=\Tsl_2(v)\Tsl_1(u)R(u-v).
\end{equation}
\end{prop}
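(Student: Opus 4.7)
The plan is to expand both sides of \eqref{eq:RXX} as polynomials in $\hbar$, using $R(u-v) = (u-v)\Id + \hbar P$ and $\Tsl_a(w) = w\Id - \hbar\Tsl_a$, and to verify the equation order by order. The $\hbar^0$ and $\hbar^1$ coefficients are scalar combinations of $\Id$ and $P$ and match on both sides trivially. The $\hbar^3$ coefficient yields the identity $P\Tsl_1\Tsl_2 = \Tsl_2\Tsl_1 P$, which is an immediate consequence of the elementary swap relations $P\Tsl_1 = \Tsl_2 P$ and $P\Tsl_2 = \Tsl_1 P$; these hold for any matrix of the form $\sum_{i,j} E_{ij}\otimes X_{ij}$ regardless of commutativity of the entries $X_{ij}$.

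Using those same swap relations to rewrite the terms $P\Tsl_a$ appearing at order $\hbar^2$, a short calculation shows that the $\hbar^2$ coefficient of the difference between the two sides of \eqref{eq:RXX} equals
\[
(u-v)\bigl([\Tsl_1,\Tsl_2] - (\Tsl_1 - \Tsl_2)P\bigr).
\]
Hence the proposition reduces to the single matrix identity $[\Tsl_1,\Tsl_2] = (\Tsl_1 - \Tsl_2)P$, which, after expansion in the basis $\{E_{ij}\otimes E_{kl}\}$ of $\End(\C^n)^{\otimes 2}$, is equivalent to the scalar relations
\[
[\Tsl_{ij},\Tsl_{kl}] = \delta_{jk}\,\Tsl_{il} - \delta_{il}\,\Tsl_{kj}, \qquad 1\leq i,j,k,l\leq n.
\]

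These are the standard $\gl_n$-type commutator relations for the matrix $(\Tsl_{ij})$. When $i\neq j$ and $k\neq l$ they reduce to the familiar $\sl_n$ relations $[e_{ij},e_{kl}] = \delta_{jk}e_{il} - \delta_{il}e_{kj}$ recalled in \S\ref{ssec:usl}; the mixed and purely diagonal cases are handled using \eqref{eq:X-diagonal}, i.e.\ $\Tsl_{ii}-\Tsl_{jj} = h_i+\cdots+h_{j-1}$, together with the Cartan action on the root vectors obtained from the explicit expression of $\varpi_i-\varpi_{i-1}$ in terms of the $h_j$'s. The main (and essentially only) obstacle is this case-by-case bookkeeping: one must verify that the $\sl_n$-normalised choice $\Tsl_{ii} = \varpi_i - \varpi_{i-1}$ is compatible with the uniform $\gl_n$-style commutator formula, which after unpacking is a finite and entirely mechanical check.
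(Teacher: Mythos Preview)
Your proof is correct and follows essentially the same route as the paper: both arguments reduce the RTT identity to the $\gl_n$--type commutator relation $[\Tsl_{ij},\Tsl_{kl}]=\delta_{jk}\Tsl_{il}-\delta_{il}\Tsl_{kj}$ (equation \eqref{eq:X-entry-rel}), which the paper simply records and you spell out case by case. The only organisational difference is that the paper extracts matrix entries immediately and verifies the resulting scalar relation \eqref{eq:Xu-entry-rel} directly, whereas you first organise by powers of $\hbar$ and use the swap identities $P\Tsl_1=\Tsl_2 P$ to isolate the single matrix identity $[\Tsl_1,\Tsl_2]=(\Tsl_1-\Tsl_2)P$ before passing to entries; the content is the same.
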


\begin{pf}
We apply both sides of \eqref{eq:RXX} to an arbitrary basis vector $\bs{j}\ten \bs{l}\in\C^n\ten\C^n$.
For the left--hand side, we get
\[
(u-v)
\sum_{i,k}\bs{i}\ten \bs{k}\ten \Tsl_{ij}(u)\Tsl_{kl}(v)
+
\hbar\sum_{i,k}\bs{k}\ten \bs{i}\ten \Tsl_{ij}(u)\Tsl_{kl}(v),
\]
while the right--hand side gives
\[
(u-v)
\sum_{i,k}\bs{i}\ten \bs{k}\ten \Tsl_{kl}(v)\Tsl_{ij}(u)
+
\hbar\sum_{i,k}\bs{i}\ten \bs{k}\ten \Tsl_{kj}(v)\Tsl_{il}(u).
\]
Thus, we have to prove the following equation for each $i,j,k,l$:
\begin{equation}\label{eq:Xu-entry-rel}
(u-v)[\Tsl_{ij}(u), \Tsl_{kl}(v)]=\hbar
\left(\Tsl_{kj}(v)\Tsl_{il}(u)-\Tsl_{kj}(u)\Tsl_{il}(v)\right).
\end{equation}
Switching the roles of $u\leftrightarrow v$, $ij \leftrightarrow kl$,
the equation above is equivalent to
\begin{equation}\label{eq:Xu-entry-rel-eq}
(u-v)[\Tsl_{ij}(u), \Tsl_{kl}(v)]=\hbar
\left(\Tsl_{il}(u)\Tsl_{kj}(v)-\Tsl_{il}(v)\Tsl_{kj}(u)\right).
\end{equation}
Note that the entries of the matrix $\Tsl$ satisfy the following relation
\begin{equation}\label{eq:X-entry-rel}
[\Tsl_{ij}, \Tsl_{kl}]=\delta_{jk}\Tsl_{il}-\delta_{il}\Tsl_{kj}.
\end{equation}
From this it is easy to deduce \eqref{eq:Xu-entry-rel} as follows:
\begin{align*}
(u-v)[\Tsl_{ij}(u), \Tsl_{kl}(v)] &= 
 \hbar^2(u-v)[\Tsl_{ij}, \Tsl_{kl}] \\
&=\hbar^2(u-v)(\delta_{kj}\Tsl_{il}-\delta_{il}\Tsl_{kj})\\
&= \hbar((\delta_{il}u-\hbar \Tsl_{il})(\delta_{kj}v-\hbar \Tsl_{kj})- \\
& \phantom{=}(\delta_{il}v-\hbar \Tsl_{il})(\delta_{kj}u-\hbar \Tsl_{kj}))\\
&= \hbar\left(\Tsl_{il}(u)\Tsl_{kj}(v)-\Tsl_{il}(v)\Tsl_{kj}(u)\right).
\end{align*}
This finishes the proof of the proposition.
\end{pf}

\subsection{Several variables generalization}\label{ssec:higher-RTT}

For $N\geqslant 2$, consider the following element of $\haux{n}{N}$, depending on $u_1,\cdots,u_N$:
\begin{multline*}
R(u_1, \dots, u_N) := R_{N-1,N}\cdot(R_{N-2,N}R_{N-2,N-1})\cdots(R_{1N}\cdots R_{12})=
\\
= (R_{12}\cdots R_{1N})\cdot (R_{N-2,N-1}R_{N-2,N})\cdots R_{N-1,N}
\end{multline*}
where $R_{ij} = R_{ij}(u_i-u_j)$ acts on $i^{\scriptstyle{\text{th}}}$ and
$j^{\scriptstyle{\text{th}}}$ tensor factor. The equality of the two expressions
given above follows by a repeated application of the Yang--Baxter equation
\eqref{eq:ybe} (see also the proof of the following proposition).

\begin{prop} The matrix $\Tsl(u)$ satisfies
\begin{equation}\label{eq:gen-RXX}
R(u_1,\dots, u_N)\Tsl_1(u_1)\cdots \Tsl_N(u_N)=\Tsl_N(u_N)\cdots \Tsl_1(u_1)R(u_1, \dots, u_N).
\end{equation}
\end{prop}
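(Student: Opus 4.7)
The proof naturally proceeds by induction on $N$, with the base case $N=2$ being precisely Proposition \ref{pr:rtt}. For the inductive step, note that the first of the two expressions given for $R(u_1,\ldots,u_N)$ immediately factorizes as
$$R(u_1,\ldots,u_N) \,=\, R(u_2,\ldots,u_N)\cdot \bigl(R_{1N}R_{1,N-1}\cdots R_{12}\bigr),$$
since the initial segment $R_{N-1,N}(R_{N-2,N}R_{N-2,N-1})\cdots(R_{2N}\cdots R_{23})$ is exactly $R(u_2,\ldots,u_N)$, acting trivially on the first tensor component.

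The heart of the argument is the intermediate identity
$$\bigl(R_{1N}\cdots R_{12}\bigr)\,\Tsl_1(u_1)\Tsl_2(u_2)\cdots \Tsl_N(u_N) \,=\, \Tsl_2(u_2)\cdots \Tsl_N(u_N)\,\Tsl_1(u_1)\,\bigl(R_{1N}\cdots R_{12}\bigr),$$
which I would establish by moving $\Tsl_1(u_1)$ past each $\Tsl_k(u_k)$ for $k=2,\ldots,N$ in succession. At the $k$-th stage, I apply Proposition \ref{pr:rtt} in the form $R_{1k}(u_1-u_k)\Tsl_1(u_1)\Tsl_k(u_k)=\Tsl_k(u_k)\Tsl_1(u_1)R_{1k}(u_1-u_k)$ to swap the two $\Tsl$-factors, and then transport the released $R_{1k}$ rightward through $\Tsl_{k+1}(u_{k+1}),\ldots,\Tsl_N(u_N)$: this is free of charge because $R_{1k}$ is supported on matrix indices $\{1,k\}$ while each $\Tsl_\ell(u_\ell)$ with $\ell>k$ is supported on index $\ell$, and these sets are disjoint. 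The same disjoint-support observation ensures that the remaining factors $R_{1j}$ with $j>k$, still to be used, commute past the $\Tsl_k(u_k)$ that has just been moved to the left.

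With the intermediate identity in hand, the induction closes by the chain of equalities
\begin{align*}
R(u_1,\ldots,u_N)\,\Tsl_1(u_1)\cdots \Tsl_N(u_N)
&= R(u_2,\ldots,u_N)\bigl(R_{1N}\cdots R_{12}\bigr)\Tsl_1(u_1)\Tsl_2(u_2)\cdots \Tsl_N(u_N) \\
&= R(u_2,\ldots,u_N)\,\Tsl_2(u_2)\cdots \Tsl_N(u_N)\,\Tsl_1(u_1)\bigl(R_{1N}\cdots R_{12}\bigr) \\
&= \Tsl_N(u_N)\cdots \Tsl_2(u_2)\,R(u_2,\ldots,u_N)\,\Tsl_1(u_1)\bigl(R_{1N}\cdots R_{12}\bigr) \\
&= \Tsl_N(u_N)\cdots \Tsl_2(u_2)\,\Tsl_1(u_1)\,R(u_1,\ldots,u_N),
\end{align*}
where the third equality applies the inductive hypothesis to the $N-1$ factors $\Tsl_2,\ldots,\Tsl_N$, and the fourth uses that $R(u_2,\ldots,u_N)$, being supported on tensor factors $2,\ldots,N$, commutes with $\Tsl_1(u_1)$, after which the factorization is recombined.

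The only substantive input beyond Proposition \ref{pr:rtt} is the bookkeeping of disjoint-support commutations; the Yang--Baxter equation \eqref{eq:ybe} enters only in justifying the equivalence of the two expressions for $R(u_1,\ldots,u_N)$, not in the induction itself. I expect the main obstacle to be precisely this bookkeeping: writing out the intermediate identity in a way that clearly tracks, at each of the $N-1$ substeps, which $R$-factors slide past which $\Tsl$-factors, without accumulating stray commutators.
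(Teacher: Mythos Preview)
Your proof is correct and follows essentially the same approach as the paper: induction on $N$ with base case Proposition~\ref{pr:rtt}, the factorization $R(u_1,\ldots,u_N)=R(u_2,\ldots,u_N)(R_{1N}\cdots R_{12})$, the intermediate identity moving $\Tsl_1$ past $\Tsl_2,\ldots,\Tsl_N$ via repeated two-site $\RTT$ relations and disjoint-support commutations, and the closing chain of equalities. The paper's version is terser, but the logic is identical.
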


\begin{pf}
We proceed by induction. For $N=2$, one has \eqref{eq:RXX}.
For $N>2$, one has
\begin{align*}
(R_{1N}\cdots R_{13})R_{12}\Tsl_1\Tsl_2&(\Tsl_3\cdots \Tsl_N)=\\
=&(R_{1N}\cdots R_{13})\Tsl_2\Tsl_1R_{12}(\Tsl_3\cdots \Tsl_N)=\\
=&\Tsl_2(R_{1N}\cdots R_{14})R_{13}\Tsl_1\Tsl_3(\Tsl_4\cdots \Tsl_N)R_{12}=\\
=&(\Tsl_2\cdots \Tsl_N)\Tsl_1(R_{1N}\cdots R_{12}).
\end{align*}
Since $R(u_1,\dots, u_N)=R(u_2,\dots, u_N)(R_{1N}\cdots R_{12})$, we get
\begin{align*}
R(u_1,\dots, u_N)&\Tsl_1\cdots \Tsl_N=\\
=&R(u_2,\dots, u_N)(R_{1N}\cdots R_{12})\Tsl_1\cdots \Tsl_N=\\
=&R(u_2,\dots, u_N)(\Tsl_2\cdots \Tsl_N)\Tsl_1(R_{1N}\cdots R_{12})
\end{align*}
and the result follows by induction.
\end{pf}

\subsection{Specialization}\label{ssec:RTT-antisym}

Let $A_N$ be the antisymmetriser operator $A_N=\sum_{\sigma\in\mathfrak{S}_N}(-1)^{\sigma}\sigma$.

\begin{prop}
If $u_i-u_{i+1}=-\hbar$ for all $i=1,\dots, N-1$, then
\begin{equation*}
R(u_1,\dots, u_N) = c_N A_N,
\end{equation*}
where $c_N \in \C[\hbar]$ is a scalar. Explicitly, $c_N = (-\hbar)^{N(N-1)/2} (N-1)! \cdots 1!$.
\end{prop}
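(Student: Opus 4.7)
The plan is to induct on $N$. The base case $N=2$ is immediate: $R_{12}(-\hbar) = -\hbar\Id + \hbar P = -\hbar(\Id - P) = -\hbar A_2$, giving $c_2 = -\hbar$, in agreement with the claimed formula.

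For the inductive step, I would exploit the factorization
\[
R(u_1,\ldots,u_N) = R(u_2,\ldots,u_N) \cdot \lp R_{1N}(u_1-u_N)\, R_{1,N-1}(u_1-u_{N-1})\cdots R_{12}(u_1-u_2)\rp,
\]
visible from the first form of the defining expression in Section \ref{ssec:higher-RTT}. At the specialization, the inductive hypothesis identifies $R(u_2,\ldots,u_N)$ with $c_{N-1}A'_N$, where $A'_N$ is the antisymmetrizer on tensor factors $2,\ldots,N$, and $u_1 - u_k = -(k-1)\hbar$ turns the second factor into $\hbar^{N-1}\prod_{k=N}^{2}(P_{1k} - (k-1)\Id)$. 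The proposition thus reduces to the combinatorial identity
\[
A'_N \cdot \prod_{k=N}^{2}\lp P_{1k} - (k-1)\Id\rp = (-1)^{N-1}(N-1)!\, A_N
\]
in the group algebra $\C[\fkS_N]$, where $P_{ij}$ is identified with the transposition $(ij)$.

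To prove this identity, I would expand the product directly. Each term is indexed by a subset $T = \{k_1 > \cdots > k_r\} \subseteq \{2,\ldots,N\}$ and contributes $(-1)^{N-1-r}\prod_{k \in \{2,\ldots,N\}\setminus T}(k-1)$ times $A'_N \cdot P_{1,k_1}\cdots P_{1,k_r}$. A direct check shows that $P_{1,k_1}\cdots P_{1,k_r}$ equals the $(r+1)$-cycle $\tau_T = (1, k_r, k_{r-1},\ldots, k_1)$ with sign $(-1)^r$, so the signs combine to the uniform factor $(-1)^{N-1}$. Moreover, as $\mu$ ranges over $\fkS(\{2,\ldots,N\})$, the product $\mu\tau_T$ parameterizes exactly those $\sigma \in \fkS_N$ with $\sigma^{-1}(1) = \max(T)$ (or $1$ if $T = \emptyset$). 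Grouping terms by $\ell := \sigma^{-1}(1)$, the elementary identity $\sum_{T'\subseteq S}\prod_{k \in S\setminus T'}a_k = \prod_{k\in S}(1+a_k)$, applied with $S = \{2,\ldots,\ell-1\}$ and $a_k = k-1$, yields $\sum_{T:\, \max(T)=\ell}\prod_{k \notin T}(k-1) = (N-1)!$, uniformly in $\ell$. The total is therefore $(-1)^{N-1}(N-1)!\sum_{\ell=1}^N \sum_{\sigma:\, \sigma^{-1}(1)=\ell}\text{sgn}(\sigma)\sigma = (-1)^{N-1}(N-1)!\, A_N$, as claimed.

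Combining the pieces, $c_N = c_{N-1}\cdot(-\hbar)^{N-1}(N-1)!$, and a telescoping computation confirms $c_N = (-\hbar)^{N(N-1)/2}\prod_{k=1}^{N-1}k!$. The main technical point is the combinatorial identity; its proof via direct expansion as above is essentially bookkeeping, but a cleaner conceptual alternative, closer in spirit to \cite[Chapter 1]{molev-yangian}, is to interpret $R(u_1,\ldots,u_N)|_{\text{spec}}$ as the fused $R$-matrix projecting $(\C^n)^{\otimes N}$ onto its $N$-th antisymmetric tensor power, from which proportionality to $A_N$ follows automatically and the scalar $c_N$ is then determined by a single coefficient extraction.
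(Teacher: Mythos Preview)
Your proof is correct and follows the same inductive scaffold as the paper: the base case $N=2$, the factorization $R(u_1,\ldots,u_N)=R(u_2,\ldots,u_N)(R_{1N}\cdots R_{12})$, and the reduction to a combinatorial identity in $\C[\fkS_N]$ involving $\wt{A}_{N-1}$ and the product of the $(P_{1k}-(k-1))$.

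The only point of divergence is how that identity is verified. The paper factors out the scalars to rewrite the product as $(-1)^{N-1}(N-1)!\,\wt{A}_{N-1}\bigl(1-\tfrac{1}{N-1}P_{1N}\bigr)\cdots(1-P_{12})$ and then collapses this to $\wt{A}_{N-1}(1-P_{12}-\cdots-P_{1N})=A_N$; implicitly this uses the absorption $\wt{A}_{N-1}P_{1j}P_{1k}=-\wt{A}_{N-1}P_{1j}$ for distinct $j,k\in\{2,\ldots,N\}$, which makes the cross terms telescope. Your route instead expands the product over all subsets $T$, identifies each monomial as a cycle $\tau_T$, and regroups by $\ell=\sigma^{-1}(1)$, reducing to the binomial-type identity $\sum_{T'\subseteq S}\prod_{k\in S\setminus T'}(k-1)=\prod_{k\in S}k$. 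Both arguments are short; the paper's is slicker but leans on an unstated (though standard) simplification, while yours is a self-contained piece of bookkeeping.
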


\begin{pf}
For $N=2$, $R(-\hbar)=(-\hbar)(1-P) = -\hbar A_2$. For $N>2$, one has
\[
R(u_1,\dots, u_N)=c_{N-1}\wt{A}_{N-1}(R_{1N}\cdots R_{12}),
\]
where $\wt{A}_{N-1}$ is the antisymmetriser operator on $\{2,\dots, N\}$. Then
\begin{align*}
R(u_1,\dots, u_N)=&c_{N-1}\wt{A}_{N-1}(R_{1N}\cdots R_{12})=\\
=&c_{N-1} (-\hbar)^{N-1}(N-1)!  \wt{A}_{N-1}\left(1-\frac{1}{N-1}P_{1N}\right)\cdots \left(1-P_{12}\right)=\\
=&c_N\wt{A}_{N-1}(1-P_{12}-\cdots -P_{1N})=c_N{A}_N
\end{align*}
as desired.
\end{pf}

For future reference, we will write the equation given by the proposition above
as $R(u_1,\ldots,u_N) \sim A_N$.

\begin{cor}\label{cor:antisym}
Set $u_i=u+\hbar(i-1)$. Then
\[
A_N\Tsl_1(u_1)\cdots \Tsl_N(u_N)=\Tsl_N(u_N)\cdots \Tsl_1(u_1)A_N.
\]
\end{cor}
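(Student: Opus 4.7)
The plan is to deduce the corollary directly from the two immediately preceding results. With the substitution $u_i = u + \hbar(i-1)$, one has $u_i - u_{i+1} = -\hbar$ for every $i=1,\ldots,N-1$, which is precisely the specialization hypothesis. Hence, by the specialization proposition in \S\ref{ssec:RTT-antisym}, the operator $R(u_1,\ldots,u_N)$ collapses to the scalar multiple
\[
R(u_1,\ldots,u_N) = c_N A_N, \qquad c_N = (-\hbar)^{N(N-1)/2}(N-1)!\cdots 1! .
\]

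Substituting this identity into the generalized $\RTT$ relation \eqref{eq:gen-RXX} from \S\ref{ssec:higher-RTT} yields
\[
c_N A_N\, \Tsl_1(u_1)\cdots\Tsl_N(u_N) = c_N\,\Tsl_N(u_N)\cdots\Tsl_1(u_1)\, A_N .
\]
The coefficient $c_N$ is a nonzero element of $\C[\hbar]$, and both sides lie in the free $\C[\hbar,u]$-module $\haux{n}{N}\otimes\Usl{n}[\hbar,u]$. Since $\C[\hbar,u]$ is an integral domain, $c_N$ may be cancelled, yielding the claimed identity.

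There is no real obstacle here: the whole content is already contained in the two preceding propositions, and the only small check is that the arithmetic progression $u_i = u+\hbar(i-1)$ matches the specialization $u_i - u_{i+1} = -\hbar$, together with the observation that one can divide by $c_N$ in the polynomial ring.
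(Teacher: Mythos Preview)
Your proof is correct and matches the paper's intended argument exactly: the corollary is stated without proof in the paper because it follows immediately by substituting the specialization $R(u_1,\ldots,u_N)=c_NA_N$ into the generalized $\RTT$ relation \eqref{eq:gen-RXX} and cancelling the nonzero scalar $c_N$.
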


\subsection{Quantum determinants}\label{ssec:qdet}

The quantum determinant of the matrix $\Tsl(u)$ is the element $\qdet(\Tsl(u))$
defined by the relation
\begin{equation}\label{eq:qdet}
A_n\qdet(\Tsl(u))=A_n\Tsl_1(u_1)\cdots \Tsl_n(u_n),
\end{equation}
where $u_i=u+\hbar(i-1)$ for $i=1,\dots, n$.

\begin{prop}
For every $\mu\in\fkS_n$,
\[
\qdet(\Tsl(u))=(-1)^{\mu}\sum_{\sigma\in\fkS_n}(-1)^{\sigma}\Tsl_{\sigma(1),\mu(1)}(u_1)
\cdots \Tsl_{\sigma(n),\mu(n)}(u_n).
\]
In particular,
\[
\qdet(\Tsl(u))=\sum_{\sigma\in\fkS_n}(-1)^{\sigma}\Tsl_{\sigma(1),1}(u_1)\cdots \Tsl_{\sigma(n),n}(u_n).
\]
\end{prop}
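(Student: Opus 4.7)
The plan is to extract both claims directly from the defining relation $A_n \qdet(\Tsl(u)) = A_n\Tsl_1(u_1)\cdots\Tsl_n(u_n)$ by evaluating both sides of this identity (viewed in $\End((\C^n)^{\otimes n})\otimes \Usl{n}[u,\hbar]$) on the basis vector $\bs{\mu(1)}\ten\cdots\ten\bs{\mu(n)}$, and then comparing coefficients in the free $\Usl{n}[u,\hbar]$-module $(\C^n)^{\otimes n}\otimes \Usl{n}[u,\hbar]$.

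First I would expand the right-hand side. Since $\Tsl_k(u_k)\bs{\mu(k)}=\sum_{i_k}\bs{i_k}\ten \Tsl_{i_k,\mu(k)}(u_k)$, the ordered product gives
\[
\Tsl_1(u_1)\cdots \Tsl_n(u_n)\bigl(\bs{\mu(1)}\ten\cdots\ten\bs{\mu(n)}\bigr)
=\sum_{i_1,\dots,i_n}\bs{i_1}\ten\cdots\ten\bs{i_n}\ten \Tsl_{i_1,\mu(1)}(u_1)\cdots \Tsl_{i_n,\mu(n)}(u_n).
\]
Applying the antisymmetriser kills every term except those where $(i_1,\dots,i_n)=(\sigma(1),\dots,\sigma(n))$ for some $\sigma\in\fkS_n$, and one has $A_n(\bs{\sigma(1)}\ten\cdots\ten\bs{\sigma(n)})=(-1)^{\sigma}\,e_0$ where $e_0:=\sum_{\tau\in\fkS_n}(-1)^{\tau}\bs{\tau(1)}\ten\cdots\ten\bs{\tau(n)}\neq 0$. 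The right-hand side therefore becomes
\[
e_0\ten\sum_{\sigma\in\fkS_n}(-1)^{\sigma}\Tsl_{\sigma(1),\mu(1)}(u_1)\cdots \Tsl_{\sigma(n),\mu(n)}(u_n).
\]

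For the left-hand side, since $\qdet(\Tsl(u))$ is a scalar in $\Usl{n}[u,\hbar]$, it commutes past the action on $(\C^n)^{\otimes n}$, giving
\[
A_n\qdet(\Tsl(u))\bigl(\bs{\mu(1)}\ten\cdots\ten\bs{\mu(n)}\bigr)
=A_n\bigl(\bs{\mu(1)}\ten\cdots\ten\bs{\mu(n)}\bigr)\ten\qdet(\Tsl(u))
=(-1)^{\mu}e_0\ten\qdet(\Tsl(u)).
\]
Comparing with the previous expression, the freeness of the $\Usl{n}[u,\hbar]$-module (so that $e_0$ may be cancelled) yields
\[
(-1)^{\mu}\qdet(\Tsl(u))=\sum_{\sigma\in\fkS_n}(-1)^{\sigma}\Tsl_{\sigma(1),\mu(1)}(u_1)\cdots \Tsl_{\sigma(n),\mu(n)}(u_n),
\]
which is the first assertion. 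Taking $\mu=\id$ gives the ``in particular'' statement.

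The only delicate point is a bookkeeping one: to keep the sign conventions consistent, I need to fix carefully how $\fkS_n$ acts on $(\C^n)^{\otimes n}$ (namely $\pi\cdot(\bs{i_1}\ten\cdots\ten\bs{i_n})=\bs{i_{\pi^{-1}(1)}}\ten\cdots\ten\bs{i_{\pi^{-1}(n)}}$), so that the identity $A_n(\bs{\sigma(1)}\ten\cdots\ten\bs{\sigma(n)})=(-1)^{\sigma}e_0$ holds as written. Beyond that the argument is purely formal; well-definedness of $\qdet(\Tsl(u))$ as a scalar is guaranteed by Corollary \ref{cor:antisym}, since that corollary, together with $A_n^2=n!A_n$, forces $A_n\Tsl_1(u_1)\cdots \Tsl_n(u_n)$ to be proportional to $A_n$ with a scalar in $\Usl{n}[u,\hbar]$.
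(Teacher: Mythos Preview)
Your argument is correct and is exactly the paper's approach: the paper's entire proof is the single sentence ``It is enough to apply both sides of \eqref{eq:qdet} to the vector $\bs{\mu(1)}\ten\cdots\ten \bs{\mu(n)}$ in $(\C^{n})^{\ten n}$,'' and you have simply written out that computation in full.
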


\begin{proof}
It is enough to apply both sides of \eqref{eq:qdet} to the vector
$\bs{\mu(1)}\ten\cdots\ten \bs{\mu(n)}$ in $(\C^{n})^{\ten n}$.
\end{proof}

Similarly, from Corollary \ref{cor:antisym} with $N=n$, one has the relation
\[
A_n\qdet(\Tsl(u))=\Tsl_n(u_n)\cdots \Tsl_1(u_1)A_n,
\]
providing a description of $\qdet(\Tsl(u))$ as a row--determinant.

\subsection{Quantum minors}\label{ssec:q-minors}

The quantum minors of $\Tsl(u)$ are also defined by the relation derived
in Corollary \ref{cor:antisym}. Let $N\leqslant n$ and let 
$\ul{a},\ul{b}\in\{1,\dots, n\}^N$. For convenience, we write 
$\bs{\ul{a}}$ for the basis vector $\bs{a_1}\otimes\cdots\otimes\bs{a_N}$.
Let $\mathcal{A}_N(u)$ be the operator given in Corollary \ref{cor:antisym}.
Then we define $\qmin{\ul{a}}{\ul{b}}{u}$ as the following matrix coefficient
of $\mathcal{A}_N(u)$:
\begin{equation}\label{eq:q-minors}
\mathcal{A}_N\bs{\ul{b}}=\sum_{\ul{a}} \bs{\ul{a}} \otimes \qmin{\ul{a}}{\ul{b}}{u}.
\end{equation}

The following is an obvious generalisation of \ref{ssec:qdet}.
For any $\ul{a}\in\{1,\dots, n\}^N$, we denote by $\ul{a}\setminus a_i$ the tuple obtained from
$\ul{a}$ by removing the $i^{\scriptstyle{\text{th}}}$ entry $a_i$.

\begin{lem}\label{lem:q-minors}
Let $u_j = u+\hbar(j-1)$ as before. Then we have the following:
\begin{enumerate}
\item For any tuples $\ul{a}=(a_1,\dots, a_N)$, $\ul{b}=(b_1,\dots, b_N)$, 
\begin{align*}
\qmin{\ul{a}}{\ul{b}}{u}
=&\sum_{\sigma\in\fkS_N}(-1)^{\sigma}\Tsl_{a_{\sigma(1)}, b_1}(u_1)\cdots \Tsl_{a_{\sigma(N)}, b_N}(u_N)\\
=&\sum_{\sigma\in\fkS_N}(-1)^{\sigma}\Tsl_{a_1, b_{\sigma(1)}}(u_N)\cdots \Tsl_{a_N, b_{\sigma(N)}}(u_1).
\end{align*}
\item For every $\sigma\in\fkS_N$,
\begin{align*}
\qmin{\sigma(\ul{a})}{\ul{b}}{u}=(-1)^{\sigma}\qmin{\ul{a}}{\ul{b}}{u}
=\qmin{\ul{a}}{\sigma(\ul{b})}{u}.
\end{align*}
\item
\begin{align*}
\qmin{\ul{a}}{\ul{b}}{u}
=&\sum_{k=1}^N(-1)^{N-k}\qmin{\ul{a}\setminus{a_k}}{\ul{b}\setminus b_{N}}{u}\cdot \Tsl_{a_kb_N}(u+\hbar(N-1))\\
=&\sum_{k=1}^N(-1)^{N-k}\qmin{\ul{a}\setminus{a_N}}{\ul{b}\setminus b_{k}}{u+\hbar}\cdot \Tsl_{a_Nb_k}(u)\\
=&\sum_{k=1}^N(-1)^{k-1}\Tsl_{a_kb_1}(u)\cdot \qmin{\ul{a}\setminus{a_k}}{\ul{b}\setminus b_{1}}{u+\hbar}\\
=&\sum_{k=1}^N(-1)^{k-1}
\Tsl_{a_1b_k}(u+\hbar(N-1))\cdot \qmin{\ul{a}\setminus{a_1}}{\ul{b}\setminus b_{k}}{u}.
\end{align*}
\end{enumerate}
\end{lem}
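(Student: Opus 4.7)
To prove part (1), the plan is to extract the two expressions from the two equivalent descriptions of the operator $\mathcal{A}_N(u)$ provided by Corollary \ref{cor:antisym}. The first equality comes directly from the definition: applying $\Tsl_1(u_1)\cdots\Tsl_N(u_N)$ to $\bs{\ul b}$ yields $\sum_{\ul i} \bs{\ul i}\otimes \Tsl_{i_1,b_1}(u_1)\cdots\Tsl_{i_N,b_N}(u_N)$, and acting by $A_N=\sum_\sigma(-1)^\sigma\sigma$ while reindexing via $\sigma\mapsto\sigma^{-1}$ produces the first stated formula. For the second equality, I will use the alternative form $\mathcal{A}_N=\Tsl_N(u_N)\cdots\Tsl_1(u_1)A_N$: a direct computation of the coefficient of $\bs{\ul a}$ in $\Tsl_N(u_N)\cdots\Tsl_1(u_1)A_N\bs{\ul b}$ gives
\[
\qmin{\ul a}{\ul b}{u} = \sum_{\tau\in\fkS_N}(-1)^\tau \Tsl_{a_N,b_{\tau(N)}}(u_N)\cdots \Tsl_{a_1,b_{\tau(1)}}(u_1),
\]
which has the same shape as the stated second formula, but with the product taken in the opposite algebra order. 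To pass to the stated form I will apply the row-antisymmetry of part (2) to the tuple $\pi(\ul a)$, where $\pi\in\fkS_N$ is the order-reversal, and reindex the sum via $\tau = \sigma\pi$, using that $(-1)^{\sigma\pi} = (-1)^\sigma(-1)^\pi$.

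Part (2) is then essentially immediate. Antisymmetry of $\qmin{\ul a}{\ul b}{u}$ in $\ul a$ follows at once from the first formula by the substitution $\sigma\mapsto\pi^{-1}\sigma$, while antisymmetry in $\ul b$ follows from the second formula by the substitution $\sigma\mapsto\sigma\pi^{-1}$. For part (3), each of the four identities is obtained by grouping the sum over $\fkS_N$ by the value of $\sigma$ on a single index. Starting from the first formula of part (1) and splitting by $k=\sigma(N)$, the permutations with this constraint are in bijection with bijections $\{1,\dots,N-1\}\to\{1,\dots,\widehat k,\dots,N\}$ with an overall sign factor $(-1)^{N-k}$; the remaining sum reassembles into $\qmin{\ul a\setminus a_k}{\ul b\setminus b_N}{u}$, yielding the first displayed identity. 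Splitting instead by $k=\sigma(1)$ gives the third. The second and fourth identities are obtained in the same way starting from the second formula of part (1) and grouping by $\sigma(N)$ and $\sigma(1)$ respectively; the shift by $\hbar$ in the resulting reduced minors reflects the omission of the outermost variable.

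The main subtlety will be the derivation of the second formula of part (1) from the reverse-ordered intermediate expression: the two sums involve products of non-commuting factors taken in nearly opposite orders, and their equality relies on the row-antisymmetry of part (2), which itself rests only on the first formula. This forces the logical order of the argument to be (1)-first, then (2)-in-$\ul a$, then (1)-second, then (2)-in-$\ul b$, then (3). A more symmetric alternative would be to obtain the second formula directly by conjugating the identity of Corollary \ref{cor:antisym} by the reversal permutation of the tensor factors, bypassing the appeal to antisymmetry.
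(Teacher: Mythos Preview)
Your argument is correct, and is essentially the elaboration the paper has in mind: the paper itself gives no proof beyond the remark that the lemma ``is an obvious generalisation of \S\ref{ssec:qdet}'', i.e.\ of the computation for $\qdet$, which amounts to reading off matrix coefficients of $\mathcal{A}_N(u)$ from the two forms in Corollary~\ref{cor:antisym}. Your derivation of the first formula in (1), of the intermediate reverse-ordered expression for the second, and of the four cofactor expansions in (3) by grouping on $\sigma(1)$ or $\sigma(N)$ are all correct as written.

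One small remark on organisation. You resolve the passage from the reverse-ordered intermediate to the stated second formula via row-antisymmetry, which forces the logical order you describe. A slightly cleaner route, closer in spirit to the paper's treatment of $\qdet$, is to observe that both halves of (2) follow \emph{directly} from Corollary~\ref{cor:antisym} without any explicit formula: since $\mathcal{A}_N = A_N\,\Tsl_1\cdots\Tsl_N$, its matrix entries $\qmin{\ul a}{\ul b}{u}$ are automatically alternating in $\ul a$, and since $\mathcal{A}_N = \Tsl_N\cdots\Tsl_1\,A_N$ they are alternating in $\ul b$. With (2) established first in this way, both formulas in (1) follow independently, and your identification $\Delta^{\ul a}_{\ul b} = (-1)^{\pi}\,S(\pi\ul a,\ul b)$ together with $\Delta^{\pi\ul a}_{\ul b} = (-1)^{\pi}\Delta^{\ul a}_{\ul b}$ finishes (1) exactly as you indicate. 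Either ordering is fine; yours is not wrong, just slightly less symmetric than the alternative you mention at the end.
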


\subsection{Commutation relations with quantum minors}\label{ssec:qmin-comm}

For any $\ul{a}\in\{1,\dots, n\}^N$, we denote by $\rho_{i,x}(\ul{a})$ the tuple obtained from
$\ul{a}$ by replacing the $i^{\scriptstyle{\text{th}}}$ entry $a_i$ with $x$.

\begin{prop}\label{pr:minor-comm} 
For every $1\leqslant k,l\leqslant n$ and $\ul{a},\ul{b}\in\{1,\dots, n\}^N$, we have
\[
(u-v)[\Tsl_{kl}(u), \qmin{\ul{a}}{\ul{b}}{v}] =
\hbar\sum_{i=1}^N\left( \qmin{\ul{a}}{\rho_{i,l}(\ul{b})}{v}\cdot \Tsl_{kb_i}(u)
-\Tsl_{a_il}(u)\cdot \qmin{\rho_{i,k}(\ul{a})}{\ul{b}}{v} \right),
\]
\begin{multline*}
(u-v-\hbar(N-1))[\Tsl_{kl}(u), \qmin{\ul{a}}{\ul{b}}{v}] =
\hbar\sum_{i=1}^N\left( \Tsl_{kb_i}(u)\cdot \qmin{\ul{a}}{\rho_{i,l}(\ul{b})}{v}\right.\\
\left.-\qmin{\rho_{i,k}(\ul{a})}{\ul{b}}{v}\cdot \Tsl_{a_il}(u) \right).
\end{multline*}
\end{prop}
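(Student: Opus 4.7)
The plan is to extend the RTT technique of Proposition \ref{pr:rtt} to an $(N+1)$-fold auxiliary tensor product. Work in $\End\lp\C^n \otimes (\C^n)^{\otimes N}\rp \otimes \Usl{n}[\hbar,u,v]$, labelling the first tensor slot by $0$ and letting it carry $\Tsl_0(u)$, while the remaining $N$ slots carry $\Tsl_i(v_i)$ for $v_i := v+\hbar(i-1)$. Iterating the pairwise RTT relation \eqref{eq:RXX} between slot $0$ and each of the slots $1,\ldots,N$ yields
\[
\mathcal{R}_0\,\Tsl_0(u)\Tsl_1(v_1)\cdots\Tsl_N(v_N) = \Tsl_1(v_1)\cdots\Tsl_N(v_N)\Tsl_0(u)\,\mathcal{R}_0,
\]
where $\mathcal{R}_0 := R_{0N}(u-v_N)\cdots R_{01}(u-v_1)$. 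Left-multiplying by the antisymmetriser $A_N$ on slots $1,\ldots,N$ (which commutes with $\Tsl_0(u)$), and applying Corollary \ref{cor:antisym} on the right to reverse the order of $\Tsl_1,\ldots,\Tsl_N$, one obtains
\[
A_N\,\mathcal{R}_0\,\Tsl_0(u)\Tsl_1(v_1)\cdots\Tsl_N(v_N) = \Tsl_N(v_N)\cdots\Tsl_1(v_1)\Tsl_0(u)\,A_N\,\mathcal{R}_0.
\]

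The first identity of the proposition is read off from the $\la \bs{k}\otimes \bs{\ul{a}}|\,\cdot\,|\bs{l}\otimes \bs{\ul{b}}\ra$ matrix coefficient of both sides. By \eqref{eq:q-minors}, the actions of $A_N\Tsl_1\cdots \Tsl_N$ and of $\Tsl_N\cdots \Tsl_1 A_N$ produce the quantum minors $\qmin{\ul{a}}{\ul{b}}{v}$, paired respectively with $\Tsl_{kl}(u)$ on the right and on the left of the minor. The key remaining computation is the simplification of the matrix coefficients of $A_N\,\mathcal{R}_0$. Expanding $R_{0i}(u-v_i) = (u-v-\hbar(i-1))\,\id+\hbar P_{0i}$ in the definition of $\mathcal{R}_0$ gives a sum over subsets $S\subseteq\{1,\ldots,N\}$. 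The $|S|=0$ summand contributes the scalar $\prod_{i=1}^{N}(u-v-\hbar(i-1))$; each $|S|=\{i\}$ summand contributes, after extracting the matrix coefficient, an index substitution $b_i\mapsto l$ in $\ul{b}$ (respectively $a_i \mapsto k$ in $\ul{a}$), producing the summands $\qmin{\ul{a}}{\rho_{i,l}(\ul{b})}{v}\,\Tsl_{kb_i}(u)$ and $\Tsl_{a_il}(u)\,\qmin{\rho_{i,k}(\ul{a})}{\ul{b}}{v}$ appearing in the statement; the summands with $|S|\geqslant 2$ vanish, by virtue of the relation $A_N P_{ij} = -A_N$ for $i,j\in\{1,\ldots,N\}$ combined with the rewriting of $P_{0i}P_{0j}$ as a product involving $P_{ij}$. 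Cancelling the common factor $\prod_{i=2}^{N}(u-v-\hbar(i-1))$ from both sides then yields the first identity.

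The second identity is proved by running the same argument with the reversed product $\mathcal{R}_0' := R_{01}(u-v_1)\cdots R_{0N}(u-v_N)$ in place of $\mathcal{R}_0$, using the alternative form \eqref{eq:Xu-entry-rel-eq} of the pairwise RTT commutator when shuttling $\Tsl_0(u)$ across the $\Tsl_i(v_i)$. This interchanges the relative order of $\Tsl_{kl}(u)$ and the quantum minors on the right-hand side and changes the common scalar factor to $\prod_{i=1}^{N-1}(u-v-\hbar(i-1))$, so that the surviving factor on the left becomes $(u-v-\hbar(N-1))$. The main technical obstacle is the combinatorial bookkeeping involved in showing that the multi-$P_{0i}$ terms vanish after antisymmetrisation; should this approach become unwieldy, a fallback is induction on $N$, based on the Laplace expansion of Lemma \ref{lem:q-minors}(3), the base case \eqref{eq:Xu-entry-rel}, and the Leibniz rule applied to $[\Tsl_{kl}(u),\,\cdot\,]$.
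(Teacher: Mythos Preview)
Your approach is essentially the same as the paper's: the paper invokes the generalised $\RTT$ relation \eqref{eq:gen-RXX} for $N+1$ slots with parameters $(u,v,v+\hbar,\ldots,v+\hbar(N-1))$, which factorises as $R(u_1,\ldots,u_N)\cdot(R_{0N}\cdots R_{01}) \sim A_N\cdot\mathcal{R}_0$, exactly your operator. It then reads off the $\la \bs{k}\otimes\bs{\ul{a}}|\cdot|\bs{l}\otimes\bs{\ul{b}}\ra$ matrix coefficient, just as you do.

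There is, however, a slip in your combinatorial step. The multi-$P$ terms in the expansion of $A_N\mathcal{R}_0$ do \emph{not} vanish. Using $P_{0j}P_{0i}=P_{ij}P_{0j}$ and $A_NP_{ij}=-A_N$ one gets $A_NP_{0j}P_{0i}=-A_NP_{0j}$, which is nonzero; it merely shifts the coefficient of the single-$P$ term $A_NP_{0j}$. What actually happens is a telescoping: by induction on $N$ (mirroring the computation in \S\ref{ssec:RTT-antisym}) one obtains
\[
A_N\,R_{0N}(u-v_N)\cdots R_{01}(u-v_1)
=\prod_{i=2}^{N}(u-v-\hbar(i-1))\cdot A_N\Bigl((u-v)\,\id+\hbar\sum_{i=1}^N P_{0i}\Bigr),
\]
which is the paper's statement $R(u,v,\ldots)\sim A_N+\frac{\hbar}{u-v}\sum_i A_N P_{0i}$. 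This is precisely the common factor you correctly anticipate cancelling, so your overall argument goes through once the ``$|S|\geqslant 2$ vanish'' claim is replaced by this identity. The second formula follows the same way from the alternative factorisation $R(u,v,\ldots)\sim (R_{01}\cdots R_{0N})\cdot A_N$, giving the shifted prefactor $(u-v-\hbar(N-1))$ and the opposite ordering of $\Tsl_{kl}(u)$ and the minors.
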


\begin{pf}
Consider the generalised $RTT$ relation
\begin{align*}
R(u,v,v+\hbar,&\dots, v+\hbar(N-1))\Tsl_0(u)\Tsl_1(v)\cdots \Tsl_N(v+\hbar(N-1)) =\\
=&\Tsl_N(v+\hbar(N-1))\cdots \Tsl_1(v)\Tsl_0(u)R(u,v,v+\hbar,\dots, v+\hbar(N-1)).
\end{align*}

From the definition given in Section \ref{ssec:higher-RTT}, we get
\[
R(u,v,v+\hbar,\dots, v+\hbar(N-1)) \sim A_N+\frac{\hbar}{u-v}\sum_{i=1}^NA_NP_{0i}.
\]
The first equation then follows by applying the identity above to the vector $\bs{l}\ten \bs{\ul{b}}$ and
computing the coefficient of $\bs{k}\ten \bs{\ul{a}}$. The proof of the second one
is analogous.
\end{pf}

\begin{cor}\label{cor:minor-comm}
For $\ul{a},\ul{b}\in\{1,\dots, n\}^N$ and $1\leqslant i,j\leqslant N$,
$$[\Tsl_{a_ib_j}(u), \qmin{a_1\dots a_N}{b_1\dots b_N}{v}]=0.$$
\end{cor}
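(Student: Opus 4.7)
The plan is to deduce this directly from Proposition \ref{pr:minor-comm} by specializing the free indices and exploiting the antisymmetry of quantum minors. I would apply the first identity of Proposition \ref{pr:minor-comm} with the choice $k=a_i$ and $l=b_j$, writing the resulting sum with a fresh summation variable $r\in\{1,\ldots,N\}$ (to avoid clashing with the fixed indices $i,j$ of the corollary). This expresses $(u-v)[\Tsl_{a_ib_j}(u),\qmin{\ul{a}}{\ul{b}}{v}]$ as $\hbar$ times a sum of products involving $\qmin{\ul{a}}{\rho_{r,b_j}(\ul{b})}{v}$ and $\qmin{\rho_{r,a_i}(\ul{a})}{\ul{b}}{v}$.

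Next, I would argue that almost every term in the sum vanishes. For $r\neq j$, the tuple $\rho_{r,b_j}(\ul{b})$ carries $b_j$ in both position $r$ (by construction) and position $j$ (already present in $\ul{b}$); transposing these two equal entries and invoking the antisymmetry in column indices from Lemma \ref{lem:q-minors}(2) shows that $\qmin{\ul{a}}{\rho_{r,b_j}(\ul{b})}{v}$ equals its own negative, hence vanishes. By the same reasoning applied to the row indices, $\qmin{\rho_{r,a_i}(\ul{a})}{\ul{b}}{v}=0$ for every $r\neq i$. The only contributions that survive are therefore $r=j$ in the first sum and $r=i$ in the second, and for these indices the modified tuples reduce to $\ul{b}$ and $\ul{a}$ respectively.

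Collecting the two surviving contributions, the identity collapses to
\[
(u-v)[\Tsl_{a_ib_j}(u),\qmin{\ul{a}}{\ul{b}}{v}]
= \hbar\lp \qmin{\ul{a}}{\ul{b}}{v}\cdot\Tsl_{a_ib_j}(u) - \Tsl_{a_ib_j}(u)\cdot\qmin{\ul{a}}{\ul{b}}{v} \rp
= -\hbar[\Tsl_{a_ib_j}(u),\qmin{\ul{a}}{\ul{b}}{v}],
\]
which rearranges to $(u-v+\hbar)[\Tsl_{a_ib_j}(u),\qmin{\ul{a}}{\ul{b}}{v}]=0$. Since the commutator is a polynomial in the formal variables $u,v$ with coefficients in $\Usl{n}[\hbar]$, and multiplication by the nonzero polynomial $u-v+\hbar$ in $\Usl{n}[\hbar,u,v]$ is injective, the commutator itself must vanish. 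There is no genuine obstacle here: the whole argument is careful bookkeeping of which summands of the minor commutation relation survive once the indices of $\Tsl_{kl}(u)$ are aligned with entries of $\ul{a}$ and $\ul{b}$.
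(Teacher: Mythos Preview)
Your argument is correct and is precisely the intended derivation: the paper states the corollary without proof immediately after Proposition~\ref{pr:minor-comm}, and the standard way to extract it is exactly your specialization $k=a_i$, $l=b_j$ together with the antisymmetry of Lemma~\ref{lem:q-minors}(2) to kill the off-diagonal summands, followed by cancellation of the nonzero factor $u-v+\hbar$. There is nothing to add.
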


\subsection{Center of $\Usl{n}$}\label{ssec:bethe}

An easy application of Corollary \ref{cor:minor-comm} is that the coefficients
of the principal minors of $\Tsl(u)$ commute with each other. We record this
observation and the well--known fact about the center of $\Usl{n}$ below.
Recall that we defined in \eqref{eq:polyP}:
\[
\Psl_k(u) = \qmin{1,\ldots,k}{1,\ldots,k}{u-\hhalf (k-1)} \qquad
\text{for each } 1\leqslant k\leqslant n.
\] 
Note that $\Psl_k(u)$ is a (monic in $u$) homogeneous polynomial of degree $k$ in $\Usl{n}[\hbar,u]$,
where the grading is understood to be $0$ for elements of $\Usl{n}$ and $1$
for $u$ and $\hbar$. Let us denote its coefficients as:
\[
\Psl_k(u) = u^k + \sum_{j=0}^{k-1} \commelt{k}{j} \hbar^{k-j} u^{j}.
\]
We observe that $\commelt{n}{n-1} = \Tr{}\lp\Tsl\rp = 0$.


\begin{prop}\label{pr:bethe}
\hfill
\begin{enumerate}
\item The elements $\{\commelt{k}{j}\}_{1\leqslant k\leqslant n, 0\leqslant j\leqslant k-1}$ form a commutative
subalgebra of $\Usl{n}$.

\item The elements $\{\commelt{n}{j}\}_{0\leqslant j\leqslant n-2}$ are algebraically independent
and generate the center of $\Usl{n}$.
\[
\mathcal{Z}(\Usl{n}) = \C[\commelt{n}{0},\ldots,\commelt{n}{n-2}].
\]
\item The roots of $\Psl_k(u)$ are distinct.
\end{enumerate}
\end{prop}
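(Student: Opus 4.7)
The plan breaks into the three parts, unified by the $RTT$ formalism of Section \ref{sec:qminsl} and a symbol/associated-graded argument. For part (1), I would use the extended $RTT$ relation \eqref{eq:gen-RXX}. Applied to the product $\Tsl_1(u_1)\cdots \Tsl_k(u_k)\Tsl_{k+1}(v_1)\cdots \Tsl_{k+l}(v_l)$ with $u_i = u + \hbar(i-1)$ and $v_j = v + \hbar(j-1)$, the $R$-matrix $R(u_1,\ldots,u_k,v_1,\ldots,v_l)$ factors, up to a scalar and via the Yang--Baxter equation \eqref{eq:ybe}, as $A_k \otimes A_l$ (acting on positions $\{1,\ldots,k\}$ and $\{k+1,\ldots,k+l\}$) composed with a product of cross-block factors $R(u_i - v_j)$ that are invertible over $\C((u-v))[\hbar]$. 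Sandwiching the $RTT$ identity between basis vectors that extract principal quantum minors, the two antisymmetrizers project onto $\Psl_k(u)\Psl_l(v)$ on the left and $\Psl_l(v)\Psl_k(u)$ on the right, while the cross-block factors contribute an invertible scalar. Clearing this scalar yields $[\Psl_k(u),\Psl_l(v)] = 0$, and therefore $[\commelt{k}{i},\commelt{l}{j}] = 0$ for all indices.

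For part (2), I would first establish centrality of the coefficients $\commelt{n}{j}$ via Proposition \ref{pr:minor-comm}. Specializing to $N = n$ and $\ul{a} = \ul{b} = (1,\ldots,n)$, the tuple $\rho_{i,l}(\ul{b})$ is a permutation of $(1,\ldots,n)$ only for $i = l$, and for $i \neq l$ the corresponding minor vanishes by the antisymmetry of Lemma \ref{lem:q-minors}(2); a symmetric argument handles the second sum. The commutator formula then collapses to $(u - v + \hbar)[\Tsl_{kl}(u), \Psl_n(v + \hbar(n-1)/2)] = 0$, so the $\commelt{n}{j}$ commute with every $\Tsl_{kl}$, and hence with all of $\Usl{n}$. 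For algebraic independence and generation of the center, I would then pass to the associated graded with respect to the PBW filtration (combined with $\deg(\hbar) = \deg(u) = 1$): the symbol of $\Psl_n(u)$ is the classical characteristic polynomial $\det(u\Id - \hbar\mathsf{T})$ of the tautological traceless matrix $\mathsf{T}$ in $\Sym(\sl_n)$, whose non-trivial coefficients freely generate the invariant ring $\Sym(\sl_n)^{SL_n}$ by Chevalley's restriction theorem. Together with the Harish--Chandra identification $\gr \mathcal{Z}(\Usl{n}) \cong \Sym(\sl_n)^{SL_n}$, this lifts to algebraic independence and free generation of the center by $\commelt{n}{0},\ldots,\commelt{n}{n-2}$.

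For part (3), the same symbol argument applied to the discriminant does the job. By part (1), $\operatorname{disc}_u\Psl_k(u)$ is a well-defined element of $\Usl{n}$, whose PBW symbol in $\Sym(\sl_n)$ is the discriminant of the classical characteristic polynomial of the top-left $k \times k$ submatrix $\mathsf{T}_{[k]}$. Since the off-diagonal $e_{ij}$ (with $i \neq j \in \{1,\ldots,k\}$) together with the $k$ diagonal combinations $\cowt_i - \cowt_{i-1}$ are linearly independent in $\sl_n$ for $k \leq n-1$ (and span the traceless $k\times k$ matrices for $k = n$), $\mathsf{T}_{[k]}$ can be freely specialized to a matrix with $k$ distinct eigenvalues, so its classical discriminant is a non-zero element of $\Sym(\sl_n)$. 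Hence $\operatorname{disc}_u \Psl_k(u) \neq 0$ in $\Usl{n}$, and the roots of $\Psl_k(u)$ are distinct in any splitting extension. The main obstacle is the careful $RTT$ bookkeeping in part (1), namely tracking how the block antisymmetrizers interact with the iterated Yang--Baxter identity so that the cross-block contribution really reduces to an invertible scalar; after that, parts (2) and (3) follow cleanly from classical invariant theory.
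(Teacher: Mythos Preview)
Your approach to part (1) has a real gap. The cross-block factors $\prod R_{ij}(u_i-v_j)=\prod\bigl((u_i-v_j)\Id+\hbar P_{ij}\bigr)$ do \emph{not} act as an invertible scalar on the antisymmetric subspace: already for $k=2$, $l=1$ one checks that $R_{13}R_{23}$ applied to $A_2\bs{12}\otimes\bs{1}=\bs{121}-\bs{211}$ produces a term in $\bs{112}$ and leaves the coefficients of $\bs{121}$ and $-\bs{211}$ unequal. What is true is that, after pairing with the principal basis vectors on both sides, an index--multiset conservation argument together with the vanishing of minors with repeated rows forces only the permutations of $(1,\ldots,k)$ and $(1,\ldots,l)$ to contribute; one must then still check that the resulting scalar prefactors on the two sides coincide. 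None of this is addressed, and carrying it out amounts to reproving Proposition~\ref{pr:minor-comm}. The paper bypasses all of it: Corollary~\ref{cor:minor-comm} says that every entry $\Tsl_{ab}(u)$ with $a,b\in\{1,\ldots,k\}$ commutes with $\qmin{1,\ldots,k}{1,\ldots,k}{v}$, so for $l\leqslant k$ every factor appearing in $\Psl_l(u)$ commutes with $\Psl_k(v)$, and (1) is immediate.

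Your part (2) matches the paper's argument, just phrased via $\Sym(\sl_n)^{SL_n}$ and Chevalley restriction rather than $\C[\h]^{\fkS_n}$ and power sums; these are equivalent through the Harish--Chandra isomorphism. Your part (3), on the other hand, is more complete than the paper's sketch. The paper deduces $\operatorname{Disc}(\Psl_k)\neq 0$ from algebraic independence of the coefficients, but part (2) only establishes that independence for $k=n$. Your symbol argument---observing that the entries $\cowt_i-\cowt_{i-1}$ and $e_{ij}$ with $i,j\leqslant k$ are linearly independent in $\sl_n$, so the top-left $k\times k$ block specialises to a matrix with distinct eigenvalues and hence has non-vanishing classical discriminant---supplies the missing justification for $k<n$.
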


\begin{pf}
As remarked earlier, (1) follows directly from Corollary \ref{cor:minor-comm}.
We briefly sketch the proof of (2) which is otherwise well--known. 
One identifies the center $\mathcal{Z}(\Usl{n})$ with the algebra of invariants
$\C[\h]^{\fkS_n}$ (see, for instance, \cite[\S 7.4]{dixmier}). The latter
is a polynomial ring in $n-1$ variables, $p_2,\ldots,p_{n}$ (power sum
symmetric functions). The reader can easily check that under these
identifications $\commelt{n}{j} = p_{j+2}$ which proves the claimed assertion.\\

(3) follows from (2) using the following standard argument. 
Let $P(u)$ be a monic polynomial with
coefficients from a (unital) commutative ring $A$. 
Define $\operatorname{Disc}(P) = \prod_{i\neq j} (a_i-a_j)$ where $\{a_i\}$ are the roots
of $P(u)$. Note that the expression of $\operatorname{Disc}(P)$ is symmetric in $\{a_i\}$
and hence it is a polynomial in the coefficients of $P(u)$. By definition
$\operatorname{Disc}(P)=0$ if, and only if $P(u)$ has some root with multiplicity $>1$.
In this case one obtains a non--trivial algebraic relation among the coefficients
of $P(u)$.
\end{pf}

\subsection{$\psi$--operators}\label{ssec:psi-operator}

Let $0\leqslant k\leqslant n-2$ and $m=n-k$. For each $i,j\in\{1,\ldots, m\}$ define
\begin{equation}\label{eq:psi-operator}
\psiop{k}{\Tsl}_{ij}(u) := \qmin{1,\ldots,k}{1,\ldots,k}{u-\hhalf k}^{-1}
\qmin{1,\ldots,k,k+i}{1,\ldots,k,k+j}{u-\hhalf k}.
\end{equation}

We will skip the dependence on $\Tsl$ from the notation when no confusion is possible.
We view this $\psiopw{k}_{ij}(u)$ as an element of $\Usl{n}[\hbar][u;u^{-1}]]$.

\begin{prop}\label{pr:psi-operator}\hfill
\begin{enumerate}
\item The $m\times m$ matrix $\psiopw{k}$ satisfies the $\RTT$ relations
with $R(u) = u\Id + \hbar P \in \auxaux{m}[\hbar,u]$. More explicitly, the following
relations hold for $a,b,c,d\in\{1,\ldots,m\}$
\begin{equation}
(u-v)[\psiopw{k}_{ab}(u),\psiopw{k}_{cd}(v)] = \hbar
\lp
\psiopw{k}_{ad}(u)\psiopw{k}_{cb}(v) - 
\psiopw{k}_{ad}(v)\psiopw{k}_{cb}(u) 
\rp.
\end{equation}

\item The following iteration relation holds for the $\psi$--operator
\[
\psiop{k}{\psiop{l}{\Tsl}} = \psiop{k+l}{\Tsl}.
\]
\end{enumerate}
\end{prop}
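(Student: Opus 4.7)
The plan is to establish both parts via a quantum Sylvester-type identity expressing the quantum minors of $\psiop{k}{\Tsl}$ as normalised quantum minors of $\Tsl$ itself. Set
\[
D(u) := \qmin{1,\ldots,k}{1,\ldots,k}{u-\hhalf k}
\aand
M_{ij}(u) := \qmin{1,\ldots,k,k+i}{1,\ldots,k,k+j}{u-\hhalf k},
\]
so that, by \eqref{eq:psi-operator}, $\psiopw{k}_{ij}(u) = D(u)^{-1}M_{ij}(u)$. The first step I would carry out is to compute the basic commutators $[D(u),D(v)]=0$ (a consequence of Proposition \ref{pr:bethe}(1)) together with $[D(u),M_{cd}(v)]$ and $[M_{ab}(u),M_{cd}(v)]$ via Proposition \ref{pr:minor-comm}. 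The essential simplification in both of the latter is that whenever the row- or column-replacement $\rho_{i,x}$ duplicates one of the first $k$ indices, the resulting quantum minor vanishes by the antisymmetry of Lemma \ref{lem:q-minors}(2), so only a controlled number of terms survive.

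For part (1), the $\RTT$ relation for $(\psiopw{k}_{ij}(u))_{i,j=1}^m$ then follows by expanding
\[
(u-v)\,[\,D(u)^{-1}M_{ab}(u),\,D(v)^{-1}M_{cd}(v)\,]
\]
using the commutators computed above and rearranging; after the cancellations produced by the antisymmetry-vanishing terms, the combinations $\psiopw{k}_{ad}(u)\psiopw{k}_{cb}(v)-\psiopw{k}_{ad}(v)\psiopw{k}_{cb}(u)$ appear on the right-hand side, and the $D(u)^{-1}D(v)^{-1}$ prefactor reassembles the $\psiopw{k}$-entries.

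For part (2), I would prove by induction on $l$ a quantum Sylvester identity of the schematic form
\[
\qminpsi{k}{\ul{a}}{\ul{b}}{u}
\;=\;
\lp\prod_{j=0}^{l-2} D\lp u+\hbar j-\hhalf(l-1)\rp\rp^{-1}
\cdot
\qmin{1,\ldots,k,k+a_1,\ldots,k+a_l}{1,\ldots,k,k+b_1,\ldots,k+b_l}{u-\hhalf(k+l-1)}
\]
(with the precise $D$-shifts to be pinned down in the proof) valid for all $\ul{a},\ul{b}\in\{1,\ldots,m\}^l$. The base case $l=1$ is \eqref{eq:psi-operator}, and the inductive step is obtained by expanding the right-hand side via the Laplace-type expansion of Lemma \ref{lem:q-minors}(3) and then using the commutation relations from Step~1 to collect $D^{-1}$ factors on the left. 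Applying this identity both to $\ul{a}=\ul{b}=(1,\ldots,l)$ and to $\ul{a}=(1,\ldots,l,l+a)$, $\ul{b}=(1,\ldots,l,l+b)$, the ratio
\[
\psiop{l}{\psiop{k}{\Tsl}}_{ab}(u)
\;=\;
\qminpsi{k}{1,\ldots,l}{1,\ldots,l}{u-\hhalf l}^{-1}\,
\qminpsi{k}{1,\ldots,l,l+a}{1,\ldots,l,l+b}{u-\hhalf l}
\]
telescopes after cancellation of the $D$-factors to exactly $\psiop{k+l}{\Tsl}_{ab}(u)$.

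The main obstacle is the careful bookkeeping in the Sylvester identity: one must simultaneously track three kinds of spectral shifts---the $-\hhalf k$ from the definition of $\psiopw{k}$, the internal $\hbar(j-1)$ shifts inside each quantum minor, and the new overall $-\hhalf(k+l-1)$ shift arising on the $\Tsl$-side---and verify that, together with the antisymmetry-driven cancellations, they combine exactly to $\psiop{k+l}{\Tsl}$ with no residual scalar factor. This is the sense in which Proposition \ref{pr:psi-operator} realises a ``reduction to rank 1 and 2'': via iterated application of the operator $\psiopw{1}$, the manipulation of arbitrary-size quantum minors of $\Tsl$ is effectively replaced by manipulations on the smaller matrices $\psiop{k}{\Tsl}$, where the $\RTT$ relation is eventually verified on $2\times 2$ principal blocks by direct computation.
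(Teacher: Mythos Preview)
Your overall strategy differs from the paper's, and while part~(2) along the lines you sketch is viable, part~(1) has a genuine gap.

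\textbf{The gap in (1).} You plan to compute $[M_{ab}(u),M_{cd}(v)]$ ``via Proposition~\ref{pr:minor-comm}'', but that proposition only gives the commutator of a \emph{single entry} $\Tsl_{kl}(u)$ with a quantum minor, not the commutator of two $(k{+}1)\times(k{+}1)$ minors. To extract minor--minor commutators you would need to iterate Proposition~\ref{pr:minor-comm} through a Laplace expansion of $M_{ab}(u)$, or (equivalently) run the generalised $\RTT$ relation~\eqref{eq:gen-RXX} with $N=2(k{+}1)$ auxiliary factors and project with two antisymmetrisers. Neither is impossible, but it is a substantial computation you have not set up; the cancellations you anticipate from antisymmetry are real but not sufficient on their own to close the argument. (Incidentally, $[D(u),M_{cd}(v)]=0$ always: every $\Tsl_{ij}(u)$ with $i,j\leqslant k$ lies among the row/column indices of $M_{cd}(v)$, so Corollary~\ref{cor:minor-comm} applies termwise to $D(u)$.)

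\textbf{Comparison with the paper.} The paper reverses your order: it proves (2) first, reducing to $k=1$ (since $\psiop{k}{\cdot}=\psiop{1}{\cdot}^{\circ k}$), and establishes the resulting Jacobi-type identity~\eqref{eq:pfpsiop1} by repeated Laplace expansion from Lemma~\ref{lem:q-minors}(3). Then (1) is reduced via (2) to $k=1$, and that base case is handled not by commutator algebra but by evaluating the four-factor operator $R(u,u+\hbar,v,v+\hbar)\phi_1\phi_2\phi_3\phi_4$ on $\bs{1c1d}$ and reading off the coefficient of $\bs{1a1b}$, using the two equivalent orderings of the $R$-factors coming from \eqref{eq:ybe}. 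This sidesteps the minor--minor commutator entirely.

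\textbf{Your approach to (2).} The quantum Sylvester identity you describe does hold (this is classical in the $\Yhgl{n}$ literature, cf.\ Molev or Brundan--Kleshchev) and, once proved, yields (2) by the telescoping ratio you indicate. So your route to (2) is a legitimate alternative. Note, however, that the inductive step for the Sylvester identity itself already requires knowing that the $\psiopw{k}$-matrix satisfies $\RTT$ (to manipulate its minors), so unless you prove the Sylvester identity directly in terms of $\Tsl$-minors, there is a circularity risk with your ordering. The paper's ordering---(2) by a bare determinant identity, then (1) reduced to $k=1$---avoids this.
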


\begin{pf}
We prove (2) first. For that it is enough to prove the assertion for $k=1$ (the general
case follows from repeated application of $k=1$ case). 
Let us assume that we have a matrix $\phi(u) \in \aux{m}\otimes 
\Usl{n}[\hbar][u;u^{-1}]]$ satisfying the $\RTT$ relations.
The reader can verify
easily that the equation $\psiop{1}{\psiop{l}{\phi}} = \psiop{l+1}{\phi}$,
is equivalent to the following determinant identity:
\begin{multline}\label{eq:pfpsiop1}
\tqmin{1,\ldots,l,l+1,a}{1,\ldots,l,l+1,b}{u}
\tqmin{1,\ldots,l}{1,\ldots,l}{u+\hbar} = 
\tqmin{1,\ldots,l,l+1}{1,\ldots,l,l+1}{u}
\tqmin{1,\ldots,l,a}{1,\ldots,l,b}{u+\hbar} -\\
\tqmin{1,\ldots,l,a}{1,\ldots,l,l+1}{u}
\tqmin{1,\ldots,l,l+1}{1,\ldots,l,b}{u+\hbar},
\end{multline}
where $a,b\geqslant l+2$. The proof of this identity uses Lemma \ref{lem:q-minors}.
For notational convenience we will write $\ul{l}$ for the sequence $1,\ldots,l$
and $\ul{l}\setminus i$ for the sequence $1,\ldots,\wh{i},\ldots,l$, for any
$1\leqslant i\leqslant l$. As before, let $u_j = u+j\hbar$. Then we have the following
column expansion from the second equation of Lemma \ref{lem:q-minors} (3).
\begin{multline*}
\tqmin{\ul{l};l+1,a}{\ul{l};l+1,b}{u_0} = 
\tqmin{\ul{l};l+1}{\ul{l};l+1}{u_0}\phi_{a,b}(u_{l+1})
-\tqmin{\ul{l};a}{\ul{l};l+1}{u_0}\phi_{l+1,b}(u_{l+1}) \\
+ \sum_{i=1}^l (-1)^{l+i}\tqmin{\ul{l}\setminus i; l+1,a}{\ul{l};l+1}{u_0}
\phi_{i,b}(u_{l+1}).
\end{multline*}
Substituting this expression in \eqref{eq:pfpsiop1} gives us
\begin{multline}\label{eq:pfpsiop2}
\lp\sum_{i=1}^l (-1)^{l+i}\tqmin{\ul{l}\setminus i; l+1,a}{\ul{l};l+1}{u_0}
\phi_{i,b}(u_{l+1}) \rp\cdot \tqmin{\ul{l}}{\ul{l}}{u_1} = \\
\tqmin{\ul{l+1}}{\ul{l+1}}{u_0}
\lp \tqmin{\ul{l};a}{\ul{l};b}{u_1} - \phi_{a,b}(u_{l+1})\tqmin{\ul{l}}{\ul{l}}{u_1}\rp
\\
-\tqmin{\ul{l};a}{\ul{l+1}}{u_0}
\lp \tqmin{\ul{l+1}}{\ul{l};b}{u_1} - \phi_{l+1,b}(u_{l+1})
\tqmin{\ul{l}}{\ul{l}}{u_1}\rp.
\end{multline}
Now we use (2) of Lemma \ref{lem:q-minors} and the row expansion formula
(the fourth equality of Lemma \ref{lem:q-minors} (3)):
\begin{multline*}
\tqmin{\ul{l};\alpha}{\ul{l};\beta}{w} = \tqmin{\alpha;\ul{l}}{\beta;\ul{l}}{w}
= \phi_{\alpha,\beta}(w_l)\tqmin{\ul{l}}{\ul{l}}{w} + \\
\sum_{j=1}^l (-1)^{l+j+1}\phi_{\alpha,j}(w_l)
\tqmin{\ul{l}}{\ul{l}\setminus j; \beta}{w}
\end{multline*}
to rewrite the \rhs of \eqref{eq:pfpsiop2} as
\begin{align*}
\text{R.H.S.} &= \tqmin{\ul{l+1}}{\ul{l+1}}{u_0}\lp
\sum_{j=1}^l (-1)^{l+j+1}\phi_{a,j}(u_{l+1})\tqmin{\ul{l}}{\ul{l}\setminus j; b}{u_1}
\rp \\
&\phantom{=} - \tqmin{\ul{l};a}{\ul{l+1}}{u_0}\lp
\sum_{j=1}^l (-1)^{l+j+1}\phi_{l+1,j}(u_{l+1})\tqmin{\ul{l}}{\ul{l}\setminus j; b}{u_1}
\rp \\
&= \sum_{j=1}^l (-1)^{l+j+1}
\lp
\tqmin{\ul{l+1}}{\ul{l+1}}{u_0}\phi_{a,j}(u_{l+1}) 
\right.\\ & \qquad\qquad\left. - 
\tqmin{\ul{l};a}{\ul{l+1}}{u_0}\phi_{l+1,j}(u_{l+1})
\rp
\tqmin{\ul{l}}{\ul{l}\setminus j; b}{u_1}\\
&= \sum_{i,j=1}^l (-1)^{i+j}
\tqmin{\ul{l}\setminus i; l+1,a}{\ul{l};l+1}{u_0}
\phi_{ij}(u_{l+1})
\tqmin{\ul{l}}{\ul{l}\setminus j; b}{u_1},
\end{align*}
where in the last equality we used the column expansion of a matrix with a repeated
column:
\begin{multline*}
0 = \tqmin{\ul{l};l+1,a}{\ul{l};l+1,j}{w} = 
\tqmin{\ul{l};l+1}{\ul{l};l+1}{w}\phi_{a,j}(w_{l+1})
- \tqmin{\ul{l};a}{\ul{l};l+1}{w}\phi_{l+1,j}(w_{l+1}) \\
+ \sum_{i=1}^l (-1)^{l+i}\tqmin{\ul{l}\setminus i; l+1,a}{\ul{l};l+1}{w}
\phi_{ij}(w_{l+1}).
\end{multline*}
This turns the equation \eqref{eq:pfpsiop2} that we need to verify into the following
\begin{multline}\label{eq:pfpsiop3}
\sum_{i=1}^l (-1)^i \tqmin{\ul{l}\setminus i; l+1,a}{\ul{l};l+1}{u_0}\cdot \\
\lp
\sum_{j=1}^l (-1)^j \phi_{ij}(u_{l+1})\tqmin{\ul{l}}{\ul{l}\setminus j; b}{u_1}
-(-1)^l\phi_{i,b}(u_{l+1})
\tqmin{\ul{l}}{\ul{l}}{u_1}
\rp = 0.
\end{multline}
Now we only need to observe that each $i^{\scriptstyle{\text{th}}}$ term in the
equation above is the row expansion of $\tqmin{i;\ul{l}}{\ul{l};b}{u_1}$
which is zero since $i\in \{1,\ldots,l\}$. This finishes the proof of (2).\\

In order to prove (1) we observe that an easy induction argument, using (2),
reduces it to the case of $k=1$. Again we revert back to a more general set up
where we are given a matrix $\phi(u) \in \aux{m}\otimes \Usl{n}[\hbar][u;u^{-1}]]$
satisfying the $\RTT$ relation. We need to prove the following equation
(see \eqref{eq:Xu-entry-rel}):
\begin{equation}\label{eq:pfpsiop3-1}
(u-v)[\psi_{ac}(u),\psi_{bd}(v)] = \hbar (
\psi_{bc}(v)\psi_{ad}(u) - \psi_{bc}(u)\psi_{ad}(v)),
\end{equation}
where $\psi_{ij}(u) = \tqmin{1i}{1j}{u}$ for any $i,j\in \{2,\ldots,m\}$. Our
proof is based on the idea behind Proposition \ref{pr:minor-comm}. Namely,
we take the $R$--matrix $R(u,u+\hbar,v,v+\hbar)$ in $\haux{m}{4}$ and use 
the generalisation of the $\RTT$ relations given in \eqref{eq:gen-RXX} for $N=4$:
\begin{multline*}
R(u,u+\hbar,v,v+\hbar)\phi_1(u)\phi_2(u+\hbar)\phi_3(v)\phi_4(v+\hbar) = \\
\phi_4(v+\hbar)\phi_3(v)\phi_2(u+\hbar)\phi_1(u)R(u,u+\hbar,v,v+\hbar).
\end{multline*}
We will apply this operator to the basis vector $\bs{1c1d}$ and compute
the coefficient of $\bs{1a1b}$. For this, we rewrite $R(u,u+\hbar,v,v+\hbar)$
using the Yang--Baxter equation \eqref{eq:ybe}, where $x=u-v$:
\begin{align*}
R(u,u+\hbar,v,v+\hbar) &= \lp R_{34}(-\hbar)R_{12}(-\hbar)\rp
R_{14}(x-\hbar)R_{24}(x)R_{13}(x)R_{23}(x+\hbar) \\
&= R_{23}(x+\hbar)R_{24}(x)R_{13}(x)R_{14}(x-\hbar)
\lp R_{34}(-\hbar)R_{12}(-\hbar)\rp
\end{align*}

Thus the operator we are interested in, say $\mathcal{T}(u,v)$, takes the following form.
\begin{align*}
\mathcal{T}(u,v) &= \lb \phi_4(v+\hbar)\phi_3(v)R_{34}(-\hbar)\rb \cdot 
\lb \phi_2(u+\hbar)\phi_1(u)R_{12}(-\hbar)\rb \cdot \\
&\phantom{=}\qquad\qquad 
R_{14}(x-\hbar)R_{24}(x)R_{13}(x)R_{23}(x+\hbar)\\
&= R_{23}(x+\hbar)R_{24}(x)R_{13}(x)R_{14}(x-\hbar)\cdot \\
&\phantom{=}\qquad\qquad
\lb R_{12}(-\hbar)\phi_1(u)\phi_2(u+\hbar)\rb\cdot
\lb R_{34}(-\hbar)\phi_3(v)\phi_4(v+\hbar)\rb
\end{align*}

Thus the coefficient of $\bs{1a1b}$ in $\mathcal{T}(u,v)\bs{1c1d}$ computed using the first
expression of $\mathcal{T}(u,v)$ gives the following answer, using the definition
of quantum minors given in \eqref{eq:q-minors}:
\begin{multline}\label{eq:pfpsiop4}
\left\langle 1a1b\right| \mathcal{T}(u,v) \bs{1c1d} = (\hbar x)^2 (x^2-\hbar^2)
\lp\frac{x+2\hbar}{x+\hbar}\rp\lp
\tqmin{1b}{1d}{v}\tqmin{1a}{1c}{u} \right.\\
\left.
+ \frac{\hbar}{x} \tqmin{1b}{1c}{v}\tqmin{1a}{1d}{u}
\rp.
\end{multline}

Similarly, using the second expression of $\mathcal{T}(u,v)$, the same coefficient
turns out to be:
\begin{multline}\label{eq:pfpsiop5}
\left\langle 1a1b\right| \mathcal{T}(u,v) \bs{1c1d} = (\hbar x)^2 (x^2-\hbar^2)
\lp\frac{x+2\hbar}{x+\hbar}\rp\lp
\tqmin{1a}{1c}{u}\tqmin{1b}{1d}{v} \right.\\
\left.
+ \frac{\hbar}{x} \tqmin{1b}{1c}{u}\tqmin{1a}{1d}{v}
\rp.
\end{multline}

Combining equations \eqref{eq:pfpsiop4} and \eqref{eq:pfpsiop5} we get the desired equation
\eqref{eq:pfpsiop3-1}.
\end{pf}


\section{The evaluation homomorphism}\label{sec:eval}

In this section, we complete the proof of Theorem \ref{thm:main1} by describing the evaluation
homomorphism from the Yangian $\Yhsl{n}$ to ${U\sl_n}[\hbar]$ with respect to the loop generators
of the Yangian.

\subsection{Evaluation homomorphism}\label{ssec:eval}

Recall the definition of the generating series $\{\xi_k(u),x_k^{\pm}(u)\}_{k\in\bfI}$
from Section \ref{ssec:yfields}. Let $\Psl_k(u)$ be given by \eqref{eq:polyP}, \ie
\[
\Psl_k(u) = \qmin{1,\ldots,k}{1,\ldots,k}{u-\hhalf(k-1)}.
\]
and define
\begin{align}
\ev(\xi_k(u)) &= \frac{\Psl_{k-1}(u)\Psl_{k+1}(u)}{\Psl_k\lp u+\hhalf\rp \Psl_k\lp u-\hhalf\rp}
\label{eq:evxi}\\
\ev(x_k^+(u)) &= \Psl_k\lp u+\hhalf\rp^{-1}\left[e_{k,k+1},\Psl_k\lp u+\hhalf\rp\right] 
\label{eq:evx+}\\
\ev(x_k^-(u)) &= \Psl_k\lp u-\hhalf\rp^{-1}\left[\Psl_k\lp u-\hhalf\rp, e_{k+1,k}\right] 
\label{eq:evx-}
\end{align}

\begin{thm}\label{thm:evishom}
The formulae above define an algebra homomorphism $$\ev \colon \Yhsl{n} \to \Usl{n}[\hbar].$$
\end{thm}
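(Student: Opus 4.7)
By Proposition~\ref{pr:reduction}, the list of relations to verify reduces substantially: we need (\Y1); the $i = j$ case of (\Y2) together with one simple-root case for $i \neq j$; the $i = j$ and $j = i+1$ cases of (\Y3); the special case $[x_i^+(u), x_{j,0}^-] = \delta_{ij}(\xi_i(u) - 1)$ of (\Y4); and (Y5)--(Y6). By Levendorskii's theorem \cite{levendorskii}, (Y6) reduces to its $r_i = s = 0$ specialization, which in turn follows from the Serre relations in $U\sl_n$ once one reads off from \eqref{eq:evx+}--\eqref{eq:evx-} that the images of $x_{k, 0}^\pm$ are proportional to $e_{k, k \pm 1}$.

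The engine for the remaining relations is the $\psi$-operator of \S\ref{ssec:psi-operator}. For each $k \in \bfI$, the matrix $\psiop{k-1}{\Tsl}(u)$ is of size $(n-k+1)\times(n-k+1)$ and satisfies the $RTT$ relation for Yang's $R$-matrix by Proposition~\ref{pr:psi-operator}(1). Using Lemma~\ref{lem:q-minors}(3) together with the iteration $\psiop{1}{\psiop{k-1}{\Tsl}} = \psiop{k}{\Tsl}$ from Proposition~\ref{pr:psi-operator}(2), one rewrites \eqref{eq:evxi}--\eqref{eq:evx-} in terms of entries and principal minors of the top-left $2 \times 2$ block of $\psiop{k-1}{\Tsl}(u)$: $\ev(\xi_k(u))$ becomes a ratio of the $(1,1)$-entry and the principal $2 \times 2$ quantum minor (with appropriate $\hhalf$-shifts), and $\ev(x_k^\pm(u))$ are the off-diagonal Gauss-decomposition combinations. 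The single-index Yangian relations then reduce to the standard Gauss-decomposition computation relating the $RTT$ and Drinfeld-new presentations of $Y(\gl_2)$, applied to this $2 \times 2$ block.

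Cross-index relations are handled by passing to a larger $\psi$-operator. For indices $i < j$, the matrix $\psiop{i-1}{\Tsl}(u)$ simultaneously captures the Drinfeld generators of indices $i$ and $j$: the former through its top-left $2 \times 2$ block as above, the latter through its sub-block at rows and columns $\{j-i+1, j-i+2\}$ (recovered via the iteration property). When $|i - j| \geq 2$, the two sub-blocks are non-overlapping, and the needed commutativity follows from Proposition~\ref{pr:minor-comm} applied to $\psiop{i-1}{\Tsl}(u)$, in the same way that well-separated generators of $Y(\gl_m)$ commute. For the adjacent case $j = i + 1$, one works inside the top-left $3 \times 3$ block of $\psiop{i-1}{\Tsl}(u)$, where the mixed relations reduce to the analogous $Y(\gl_3)$-level calculation.

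The main obstacle will be the adjacent case $j = i + 1$: matching the spectral shifts by $\pm\hhalf$ and $\pm\hbar$ on the two sides of (\Y2)--(\Y4) requires precise bookkeeping of how these shifts arise from the quantum-minor definition and from the $\psi$-operator iteration. The column-expansion identities of Lemma~\ref{lem:q-minors}(3), combined with the iteration property of Proposition~\ref{pr:psi-operator}(2), supply the technical scaffolding for this matching.
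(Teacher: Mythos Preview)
Your proposal is correct and follows essentially the same route as the paper: reduce via Proposition~\ref{pr:reduction} and \cite{levendorskii}, then use the $\psi$-operator of \S\ref{ssec:psi-operator} together with the recursive rewriting of $\ev$ in terms of $\psi^{(k-1)}$ (recorded in the paper as Lemma~\ref{lem:recursive-rel}) to collapse the single-index relations to a $2\times 2$ $RTT$ computation and the adjacent case of $(\mathcal{Y}3)$ to a $3\times 3$ one. The one place where the paper is more direct than your sketch is the cross-index special cases of $(\mathcal{Y}2)$ and $(\mathcal{Y}4)$: instead of embedding both nodes into a larger $\psi^{(i-1)}$, it simply invokes Corollary~\ref{cor:minor-comm} to see that $e_{j,j+1}$ and $e_{j+1,j}$ commute with $\Psl_i$ whenever $j\neq i$, which immediately dispatches $[\xi_i(u),x_{j,0}^\pm]$ and $[x_i^+(u),x_{j,0}^-]=0$. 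Your $\psi$-embedding would also work, but the phrase ``non-overlapping sub-blocks'' is not quite right: via iteration the $j$-th generators are built from the full top-left $(j{-}i{+}1)\times(j{-}i{+}1)$ minor of $\psi^{(i-1)}$, which overlaps the $2\times 2$ block; what actually makes the argument go through is that all principal minors of $\psi^{(i-1)}$ commute (Corollary~\ref{cor:minor-comm} again), while the relevant off-diagonal entries sit at disjoint positions.
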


\subsection*{Remark}
We are grateful to Maxim Nazarov for pointing out that the defining formulae of the morphism
$\ev$ are similar to those appearing in 
\cite{brundan-kleshev, drinfeld-yangian-qaffine, molev-yangian, nazarov-tarasov-rep-gt}, 
which define an embedding $\iota\colon\Yhsl{n}\to\Yhgl{n}$ and describe the Drinfeld
generators of $\Yhsl{n}$ in terms of quantum minors in $\Yhgl{n}$.
The relation with the homomorphism $\ev$ is easily explained. Since the matrix 
$\Tsl(u)$ satisfies the $RTT$ relation \eqref{eq:RXX}, there is an induced algebra 
homomorphism $\ev_{\Tsl}\colon\Yhgl{n}\to\Usl{n}[\hbar]$. Then, $\ev=\ev_{\Tsl}\circ\iota$.\\

 
\begin{pf}
We note that the coefficients of the polynomials $\{\Psl_k(u)\}_{1\leqslant k\leqslant n}$ commute,
because of Corollary \ref{cor:minor-comm}. Thus we get $[\xi_i(u),\xi_j(v)]=0$ for
every $i,j\in\bfI$ which is the relation ($\mathcal{Y}$1) of Section \ref{ssec:yfields}.\\

Comparing the coefficients of $\hbar u^{-1}$ in the definition of $\ev$,
and observing that
\[
\Psl_k(u) = u^k -\hbar\cowt_{k} u^{k-1} + \cdots
\]
it follows that $\ev(\xi_{k,0}) = 2\cowt_{k}-\cowt_{k-1}-\cowt_{k+1} = h_k$,
$\ev(x_{k,0}^+) = e_{k,k+1}$ and $\ev(x_{k,0}^-) = e_{k+1,k}$. Thus the relation (Y6)
of Section \ref{ssec:ysln} holds for $r_1=\cdots=r_m=s=0$ from the Serre
relations defining $\Usl{n}$. In turn this special case of (Y6) implies the
general case (see remark preceding Lemma \ref{lem:Y2prime}).\\

Our proof of the rest of the relations uses the $\psi$--operator introduced
in Section \ref{ssec:psi-operator} and the expression of $\ev$ in using $\psi$,
as proved below in Section \ref{ssec:evisrecursive}.\\

The proofs of ($\mathcal{Y}$2), ($\mathcal{Y}$3) and
($\mathcal{Y}$4) are given below in Sections \ref{ssec:pfev-y2}, \ref{ssec:pfev-y3}
and \ref{ssec:pfev-y4} respectively.
\end{pf}

\subsection{$\psi$--operators and evaluation homomorphism}\label{ssec:evisrecursive}

We begin by making the observation that the definition of the evaluation
map $\ev$ is obtained {\em recursively} using the $\psi$--operator introduced
in Section \ref{ssec:psi-operator}. To state this precisely, we begin by 
rewriting \eqref{eq:evx+} and \eqref{eq:evx-} using the following easily
verified identities:
\begin{eqnarray*}
\left[e_{k,k+1}, \Psl_{k}\lp u+\frac{\hbar}{2}\rp\right] &=&
-\qmin{1,\ldots, k}{1,\ldots, \wh{k},k+1}{ u-\frac{\hbar}{2}(k-1)
+\frac{\hbar}{2}},\\
\left[\Psl_{k}\lp u-\frac{\hbar}{2}\rp, e_{k+1,k}\right] &=& 
-\qmin{1,\ldots, \wh{k},k+1}{1,\ldots, k}{u-\frac{\hbar}{2}(k-1)
-\frac{\hbar}{2}}.
\end{eqnarray*}

Note that the formulae \eqref{eq:evxi}, \eqref{eq:evx+} and \eqref{eq:evx-} for $k=1$
take the following form:
\begin{align}
\ev(\xi_1(u)) &= \Tsl_{11}\lp u+\hhalf\rp^{-1}\Tsl_{11}\lp u-\hhalf\rp^{-1}
\qmin{12}{12}{u-\hhalf} \label{eq:evxi1},\\
\ev(x_1^+(u)) &= -\Tsl_{11}\lp u+\hhalf\rp^{-1} \Tsl_{12}\lp u+\hhalf\rp 
\label{eq:evx+1},\\
\ev(x_1^-(u)) &= -\Tsl_{11}\lp u-\hhalf\rp^{-1} \Tsl_{21}\lp u-\hhalf\rp 
\label{eq:evx-1}.
\end{align}

\begin{lem}\label{lem:recursive-rel}
For each $k\geqslant 1$, we have
\begin{align}
\ev(\xi_k(u)) &= \psiopw{k-1}_{11}\lp u+\hhalf\rp^{-1}\psiopw{k-1}_{11}\lp u-\hhalf\rp^{-1}
\qminpsi{k-1}{12}{12}{u-\hhalf} \label{eq:evxik},\\
\ev(x_k^+(u)) &= -\psiopw{k-1}_{11}\lp u+\hhalf\rp^{-1} \psiopw{k-1}_{12}\lp u+\hhalf\rp 
\label{eq:evx+k},\\
\ev(x_k^-(u)) &= -\psiopw{k-1}_{11}\lp u-\hhalf\rp^{-1} \psiopw{k-1}_{21}\lp u-\hhalf\rp 
\label{eq:evx-k}.
\end{align}
\end{lem}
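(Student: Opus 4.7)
The plan is to reduce \eqref{eq:evxik}--\eqref{eq:evx-k} to the definitions \eqref{eq:evxi}--\eqref{eq:evx-} by writing each entry of $\psi^{(k-1)}$ as a product of principal minors of $\Tsl$ with a (near-principal) quantum-minor numerator. Unfolding \eqref{eq:psi-operator} at level $k-1$ and tracking the shifts encoded in $\Psl_{k}(u)=\qmin{1,\ldots,k}{1,\ldots,k}{u-\hhalf(k-1)}$, I would first record
\begin{align*}
\psiopw{k-1}_{11}(u) &= \Psl_{k-1}\lp u-\hhalf\rp^{-1}\Psl_{k}(u),\\
\psiopw{k-1}_{12}(u) &= \Psl_{k-1}\lp u-\hhalf\rp^{-1}\qmin{1,\ldots,k}{1,\ldots,\wh{k},k+1}{u-\hhalf(k-1)},\\
\psiopw{k-1}_{21}(u) &= \Psl_{k-1}\lp u-\hhalf\rp^{-1}\qmin{1,\ldots,\wh{k},k+1}{1,\ldots,k}{u-\hhalf(k-1)}.
\end{align*}

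For \eqref{eq:evx+k} and \eqref{eq:evx-k}, substituting these into the right-hand side makes the two factors $\Psl_{k-1}(u)^{\pm 1}$ cancel without any use of commutation, and the resulting expression matches \eqref{eq:evx+} and \eqref{eq:evx-} via the two bracket identities stated immediately before the lemma, which rewrite $[e_{k,k+1},\Psl_k(u+\hhalf)]$ and $[\Psl_k(u-\hhalf),e_{k+1,k}]$ as the row/column-swapped $k\times k$ minors of $\Tsl$.

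For \eqref{eq:evxik}, the extra ingredient is the iteration relation of Proposition \ref{pr:psi-operator}(2) applied to the matrix $\psi^{(k-1)}$, namely
\[
\psiopw{k}_{11}(u)=\psiopw{k-1}_{11}\lp u-\hhalf\rp^{-1}\qminpsi{k-1}{12}{12}{u-\hhalf},
\]
combined with the level-$k$ form of the first display, $\psiopw{k}_{11}(u)=\Psl_k\lp u-\hhalf\rp^{-1}\Psl_{k+1}(u)$. Together these force $\qminpsi{k-1}{12}{12}{u-\hhalf}=\Psl_{k-1}(u-\hbar)^{-1}\Psl_{k+1}(u)$. Plugging this into the right-hand side of \eqref{eq:evxik} and cancelling the pair $\Psl_{k-1}(u-\hbar)^{\pm 1}$, the remaining principal factors need only be reordered to reproduce \eqref{eq:evxi}. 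This final reordering is the only genuinely nontrivial step; it is justified by the commutativity of the coefficients of $\{\Psl_j(u)\}_{1\leqslant j\leqslant n}$ from Proposition \ref{pr:bethe}(1) (equivalently Corollary \ref{cor:minor-comm}). Beyond this, the argument is bookkeeping of spectral shifts.
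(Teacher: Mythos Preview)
Your proposal is correct and follows essentially the same route as the paper's proof: both unfold the entries of $\psi^{(k-1)}$ as ratios of principal minors of $\Tsl$, cancel the adjacent $\Psl_{k-1}$ factors for $x_k^\pm$, and for $\xi_k$ use the Sylvester--type identity $\qminpsi{k-1}{12}{12}{u-\hhalf}=\Psl_{k-1}(u-\hbar)^{-1}\Psl_{k+1}(u)$ together with commutativity of the principal minors. The only difference is that you make the dependence on Proposition~\ref{pr:psi-operator}(2) and on Proposition~\ref{pr:bethe}(1) explicit, whereas the paper writes down the expanded product directly and treats these as understood.
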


\begin{pf}
The proof of this lemma is a direct verification, which we carry out below.
Let us start with the assertion for $x_k^+(u)$ from \eqref{eq:evx+k}. We 
expand the \rhs of this equation:
\begin{align*}
\text{R.H.S.} 
& = -\qmin{1,\ldots, k}{1,\ldots, k}{u-\hhalf(k-1)+\hhalf}^{-1}\cdot
\qmin{1,\ldots, k-1}{1,\ldots, k-1}{u-\hhalf(k-1)+\hhalf}\cdot \\
&\cdot\qmin{1,\ldots, k-1}{1,\ldots, k-1}{u-\hhalf(k-1)+\hhalf}^{-1}
\cdot\qmin{1,\ldots, k}{1,\ldots,\wh{k},k+1}{u-\hhalf(k-1)+\hhalf}\\
& = \Psl_{k}\lp u+\hhalf\rp^{-1}\cdot
\left[e_{k,k+1}, \Psl_{k}\lp u+\frac{\hbar}{2}\rp\right]=\ev(x^+_k(u)).
\end{align*}

Now consider the \rhs of \eqref{eq:evx-k}:
\begin{align*}
\text{R.H.S.} 
& = -\qmin{1,\ldots, k}{1,\ldots, k}{u-\hhalf(k-1)-\hhalf}^{-1}
\cdot\qmin{1,\ldots, k-1}{1,\ldots, k-1}{u-\hhalf(k-1)-\hhalf}\cdot \\
&\cdot\qmin{1,\ldots, k-1}{1,\ldots, k-1}{u-\hhalf(k-1)-\hhalf}^{-1}
\cdot\qmin{1,\ldots, k}{1,\ldots,\wh{k},k+1}{u-\hhalf(k-1)-\hhalf}\\
& = \Psl_{k}\lp u-\hhalf\rp^{-1}\cdot\left[\Psl_{k}\left(u-\hhalf\right), e_{k+1,k}\right]=
\ev(x_k^-(u)).
\end{align*}

Finally for $\xi_k(u)$, the \rhs of \eqref{eq:evxik} can be expanded as below:
\begin{align*}
\text{R.H.S.} 
&=\qmin{1,\ldots, k-1}{1,\ldots, k-1}{u-\hhalf(k-1)+\hhalf}
\cdot\qmin{1,\ldots, k}{1,\ldots, k}{u-\hhalf(k-1)+\hhalf}^{-1}\cdot\\
&\cdot\qmin{1,\ldots, k-1}{1,\ldots, k-1}{u-\hhalf(k-1)-\hhalf}
\cdot\qmin{1,\ldots, k}{1,\ldots, k}{u-\hhalf(k-1)-\hhalf}^{-1}\cdot\\
&\cdot\qmin{1,\ldots, k-1}{1,\ldots, k-1}{u-\hhalf(k-1)-\hhalf}^{-1}
\cdot\qmin{1,\ldots, k+1}{1,\ldots, k+1}{u-\hhalf(k-1)-\hhalf}\\
& = 
\frac
{\qmin{1,\ldots, k-1}{1,\ldots, k-1}{u-\hhalf(k-2)}\cdot
\qmin{1,\ldots, k+1}{1,\ldots, k+1}{u-\hhalf k}}
{\qmin{1,\ldots, k}{1,\ldots, k}{u-\hhalf(k-1)+\hhalf}\cdot
\qmin{1,\ldots, k}{1,\ldots, k}{u-\hhalf(k-1)-\hhalf}}\\
& =
\frac
{\Psl_{k-1}(u)\Psl_{k+1}(u)}
{\Psl_{k}\left(u+\hhalf\right)\Psl_{k}\left(u-\hhalf\right)}
=\ev\lp\xi_k(u)\rp.
\end{align*}
\end{pf}

\subsection{Proof of ($\mathcal{Y}$2)}\label{ssec:pfev-y2}

Using Proposition \ref{pr:reduction}, it suffices to prove the following
two relations:
\begin{itemize}
\item For each $i\in\bfI$
\begin{equation}\label{eq:pfev-y2-1}
\Ad(\xi_i(u))^{-1}(x_i^{\pm}(v)) = \frac{u-v\mp \hbar}{u-v\pm \hbar}
x_i^{\pm}(v) \pm \frac{2\hbar}{u-v\pm \hbar} x_j^{\pm}(u\pm \hbar).
\end{equation}

\item For each $i\neq j\in\bfI$
\begin{equation}\label{eq:pfev-y2-2}
\lb \xi_i(u), x_{j,0}^{\pm}\rb = \pm a_{ij} \xi_i(u)x_j^{\pm}\lp u\pm\hhalf a_{ij}\rp.
\end{equation}

\end{itemize}

Below we prove these for the $+$ case, for definiteness. The $-$ case is entirely
analogous.\\

The equation \eqref{eq:pfev-y2-2} for $j\not\in \{i-1, i+1\}$ holds,
since in that case $e_{j,j+1}$ commutes with $\{\Psl_{i-1}, \Psl_i, \Psl_{i+1}\}$
defining $\ev(\xi_i(u))$. For $j=i+1$ one only has to observe that
$e_{i+1,i+2}$ commutes with $\Psl_{i-1}$ and $\Psl_i$. Similarly the case
$j=i-1$.\\

Thus we are left with proving \eqref{eq:pfev-y2-1} for any $i\in\bfI$. Using
Lemma \ref{lem:recursive-rel}, we can write $\ev(\xi_i(u))$ and $\ev(x_i^+(u))$
in terms of the $\psi$--operator. Below we omit the superscript $(i-1)$ from 
$\psiopw{i-1}(u)$.
\begin{align*}
\ev(\xi_i(u)) &= \psiopww_{11}\lp u+\hhalf\rp^{-1} \psiopww_{11}\lp u-\hhalf\rp^{-1}
\qminpsiw{12}{12}{u-\hhalf},\\
\ev(x_i^+(u)) &= -\psiopww_{11}\lp u+\hhalf\rp^{-1} \psiopww_{12}\lp u+\hhalf\rp.
\end{align*}

\noindent Note that $\lb\psiopww(u),\psiopww(v)\rb = 0$ and  we have the following relation 
between $\psiopww_{11}$ and $\psiopww_{12}$
\begin{equation}\label{eq:pfev-y2-3}
\Ad\lp\psiopww_{11}(u)\rp\cdot \psiopww_{12}(v) = 
\frac{u-v-\hbar}{u-v} \psiopww_{12}(v) + \frac{\hbar}{u-v}\psiopww_{12}(u)
\psiopww_{11}(v)\psiopww_{11}(u)^{-1}.
\end{equation}

Setting $u=v+\hbar$ in this equation gives the following:
\begin{equation}\label{eq:pfev-y2-4}
\psiopww_{12}(v)\psiopww_{11}(v)^{-1} = 
\psiopww_{11}(v+\hbar)^{-1}\psiopww_{12}(v+\hbar).
\end{equation}

Combining these observation, the equation we need to verify, namely
\eqref{eq:pfev-y2-1} takes the following form, after using
\eqref{eq:pfev-y2-4} and renaming variables
$u,v \mapsto u+\hhalf, v+\hhalf$:
\begin{multline*}
\Ad\lp\psiopww_{11}(u-\hbar)\psiopww_{11}(u)\rp\cdot \psiopww_{12}(v-\hbar) = 
\frac{u-v-\hbar}{u-v+\hbar}\psiopww_{12}(v-\hbar) + \\
\frac{2\hbar}{u-v+\hbar}\psiopww_{12}(u)\psiopww_{11}(u)^{-1}
\psiopww_{11}(v-\hbar),
\end{multline*}
which is a direct consequence of repeated application of
\eqref{eq:pfev-y2-3}.

\subsection{Proof of ($\mathcal{Y}$3)}\label{ssec:pfev-y3}

Recall that we need to prove the following relation for every pair $i,j\in\bfI$.
\begin{multline}\label{eq:pfev-y3-0}
(u-v\mp a)x_i^{\pm}(u)x_j^{\pm}(v) = (u-v\pm a)x_j^{\pm}(v)x_i^{\pm}(u)
\\ +\hbar \lp \lb x_{i,0}^{\pm}, x_j^{\pm}(v)\rb - \lb x_i^{\pm}(u),
x_{j,0}^{\pm}\rb \rp.
\end{multline}
For $i=j$ or for a pair with $a_{ij}=0$, it suffices to prove the
following special case (see Proposition \ref{pr:reduction}).
\begin{equation}\label{eq:pfev-y3-1}
[x_{i,0}^{\pm},x_j^{\pm}(u)] - [x_i^{\pm}(u), x_{j,0}^{\pm}]
= \mp \frac{a_{ij}}{2} (x_i^{\pm}(u)x_j^{\pm}(u) + x_j^{\pm}(u)x_i^{\pm}(u)).
\end{equation}

Let us prove this relation for the $+$ case only. Note that for
$i,j\in\bfI$ such that $a_{ij}=0$, this relation follows from $[x_i^+(u),x_{j,0}^+]=0$
which is true since $e_{j,j+1}$ commutes with $e_{i,i+1}$ and $\Psl_i$.\\

Next, let us assume $i=j$. In this case we need to show that
$[x_{i,0}^+,x_i^+(u)] = -x_i^+(u)^2$. Below, we will use the fact that $e_{i,i+1}$
commutes with the commutator $[e_{i,i+1},\Psl_i(u)]$. This is because
this commutator can be written as a quantum--minor:
\[
[e_{i,i+1},\Psl_i(u)] = -\qmin{1,\ldots,i}{1,\ldots,i-1,i+1}{u-\hhalf(i-1)},
\]
and $e_{i,i+1}$ is an entry of the indicated submatrix, and we can use
Corollary \ref{cor:minor-comm} to conclude that it commutes with the quantum--minor.
Thus we have the following computation, with $\wt{u} = u-\hhalf$ for convenience:
\begin{align*}
[x_{i,0}^+,x_i^+(\wt{u})] &= \lb e_{i,i+1}, \Psl_i(u)^{-1}[e_{i,i+1},\Psl_i(u)]\rb \\
&= -\Psl_i(u)^{-1}[e_{i,i+1},\Psl_i(u)]\Psl_i(u)^{-1}[e_{i,i+1},\Psl_i(u)] \\
&= -x_i^+(\wt{u})^2
\end{align*}
as intended. Note that we used the identity $[\alpha,\beta^{-1}] = 
-\beta^{-1}[\alpha,\beta]\beta^{-1}$ in the calculation above.\\

Finally, we are left with the case $j=i+1$. We will reduce this case to rank $2$
using the $\psi$--operator. To do this, we need to rewrite the commutators
on the \rhs of \eqref{eq:pfev-y3-0} as follows. For convenience, below we
write $\wt{u} = u-\hhalf$ and $\wt{v} = v-\hhalf$. Using the definition,
and the fact that $e_{i,i+1}$ commutes with $\Psl_{i+1}$ (see Corollary
\ref{cor:minor-comm}) we get
\begin{align*}
[x_{i,0}^+, x_{i+1}^+(\wt{v})] &= \Psl_{i+1}(v)^{-1}
\lb e_{i,i+2},\Psl_{i+1}(v)\rb \\
&= \qmin{1,\ldots,i+1}{1,\ldots,i+1}{v-\hhalf i}^{-1} \cdot
\qmin{1,\ldots,i+1}{1,\ldots,i-1,i+1,i+2}{v-\hhalf i}.
\end{align*}
Similarly we get
\[
[x_i^+(\wt{u}),x_{i+1,0}^+] = - \qmin{1,\ldots,i}{1,\ldots,i}{u-\hhalf(i-1)}^{-1}\cdot
\qmin{1,\ldots,i}{1,\ldots,i-1,i+2}{u-\hhalf(i-1)}.
\]

Clearing the inverses of the principal quantum--minors from both sides of
\eqref{eq:pfev-y3-0} and using the properties of the $\psi$--operator from
Proposition \ref{pr:psi-operator}, we get the following version of \eqref{eq:pfev-y3-0}:
\begin{multline*}
\lp u-v+\hhalf\rp \psiopww_{12}(u)\qminpsiw{12}{13}{v-\hhalf} - 
\lp u-v-\hhalf\rp \qminpsiw{12}{13}{v-\hhalf}\psiopww_{12}(u) = \\
\hbar\lp
\psiopww_{11}(u)\qminpsiw{12}{23}{v-\hhalf} + 
\qminpsiw{12}{12}{v-\hhalf}\psiopww_{13}(v)
\rp.
\end{multline*}

Rearranging the terms of this equation, and replacing $v-\hhalf$ by $v$, we get
the following equation that we need to verify:
\begin{multline}\label{eq:pfev-y3-2}
(u-v-\hbar)\lb \psiopww_{12}(u), \qminpsiw{12}{13}{v}\rb = 
\\
\hbar\lp
\psiopww_{11}(u)\qminpsiw{12}{23}{v} + \qminpsiw{12}{12}{v}\psiopww_{13}(u)
-\psiopww_{12}(u)\qminpsiw{12}{13}{v}
\rp.
\end{multline}

Since the $\psi$--matrix also satisfies the $\RTT$ relations (see Proposition
\ref{pr:psi-operator}) we can use the commutation relations derived in 
Proposition \ref{pr:minor-comm}. Using the second identity given there, with
$N=2$ and $k=1,l=2,a_1=1,a_2=2,b_1=1,b_2=3$, we get
\begin{multline*}
(u-v-\hbar)\lb \psiopww_{12}(u), \qminpsiw{12}{13}{v}\rb = 
\\
\hbar\lp
\psiopww_{11}(u)\qminpsiw{12}{23}{v} + \psiopww_{13}(u)\qminpsiw{12}{12}{v}
-\qminpsiw{12}{13}{v}\psiopww_{12}(u)
\rp.
\end{multline*}
Thus the required relation follows from the following claim:\\

\noindent {\bf Claim.} The following equation holds:
\begin{equation}\label{eq:pfev-y3-3}
\lb \psiopww_{13}(u),\qminpsiw{12}{12}{v}\rb + 
\lb \psiopww_{12}(u),\qminpsiw{12}{13}{v}\rb  = 0.
\end{equation}

\noindent {\em Proof of the claim.} Multiply the \lhs by $(u-v)$
and use the first relation given in Proposition \ref{pr:minor-comm} to get
\begin{align*}
\lb \psiopww_{13}(u),\qminpsiw{12}{12}{v}\rb &= \hbar\lp
-\qminpsiw{12}{23}{v}\psiopww_{11}(u) + \qminpsiw{12}{13}{v}\psiopww_{12}(u)
\right. \\ & \qquad\qquad \left.
-\psiopww_{13}(u)\qminpsiw{12}{12}{v}\rp,\\
\lb \psiopww_{12}(u),\qminpsiw{12}{13}{v}\rb &= \hbar\lp
\qminpsiw{12}{23}{v}\psiopww_{11}(u) + \psiopww_{12}(u)\qminpsiw{12}{13}{v}
\right. \\ & \qquad\qquad \left.
-\qminpsiw{12}{12}{v}\psiopww_{13}(u)\rp.
\end{align*}

Adding the two, we get that
\begin{multline*}
(u-v)\lp\lb \psiopww_{13}(u),\qminpsiw{12}{12}{v}\rb + 
\lb \psiopww_{12}(u),\qminpsiw{12}{13}{v}\rb\rp = 
\\
-\hbar\lp \lb \psiopww_{13}(u),\qminpsiw{12}{12}{v}\rb + 
\lb \psiopww_{12}(u),\qminpsiw{12}{13}{v}\rb\rp.
\end{multline*}
This prove the claim and the relation ($\mathcal{Y}$3).

\subsection{Proof of ($\mathcal{Y}$4)}\label{ssec:pfev-y4}

Again using Proposition \ref{pr:reduction}, it is sufficient to prove the
following two versions of ($\mathcal{Y}$4).
\begin{itemize}
\item For each $i\in\bfI$, we have
\begin{equation}\label{eq:pfev-y4-1}
(u-v)[x_i^+(u),x_i^-(v)] = \hbar (\xi_i(v)-\xi_i(u)).
\end{equation}
\item For $i\neq j$, we have 
\begin{equation}\label{eq:pfev-y4-2}
[x_i^+(u),x_{j,0}^-] = 0.
\end{equation}
\end{itemize}

Note that \eqref{eq:pfev-y4-2} follows easily since $e_{j+1,j}$ commutes
with $\Psl_i$ for $i\neq j$. We will now prove \eqref{eq:pfev-y4-1} using
the $\psi$--operator as before. Recall that by definition, we have
(again we omit the superscript $(i-1)$ from $\psiopw{i-1}(u)$).
\begin{align*}
\ev(x_i^+(u)) &= -\psiopww_{11}\lp u+\hhalf\rp^{-1}\psiopww_{12}\lp u+\hhalf\rp, \\
\ev(x_i^-(u)) &= -\psiopww_{11}\lp u-\hhalf\rp^{-1}\psiopww_{21}\lp u-\hhalf\rp.
\end{align*}

In order to carry out the proof, we will need to use the following
relations:
\begin{align}
\Ad\lp\psiopww_{11}(u)\rp^{-1}\cdot \psiopww_{12}(v) &= 
\frac{u-v+\hbar}{u-v} \psiopww_{12}(v)
-\frac{\hbar}{u-v} \psiopww_{11}(u)^{-1}\psiopww_{11}(v)\psiopww_{12}(u),
\label{eq:pfev-y4-3}\\
\Ad\lp\psiopww_{11}(u)\rp^{-1}\cdot \psiopww_{21}(v) &= 
\frac{u-v-\hbar}{u-v} \psiopww_{21}(v)
+\frac{\hbar}{u-v} \psiopww_{11}(u)^{-1}\psiopww_{11}(v)\psiopww_{21}(u),
\label{eq:pfev-y4-4}
\end{align}
\begin{align}
\psiopww_{12}(v)\psiopww_{11}(v)^{-1} &= 
\psiopww_{11}(v+\hbar)^{-1}\psiopww_{12}(v+\hbar)
\label{eq:pfev-y4-5},\\
\psiopww_{21}(v)\psiopww_{11}(v)^{-1} &= 
\psiopww_{11}(v-\hbar)^{-1}\psiopww_{21}(v-\hbar)
\label{eq:pfev-y4-6},\\
(u-v)\lb \psiopww_{12}(u), \psiopww_{21}(v)\rb &= 
\hbar (\psiopww_{11}(u)\psiopww_{22}(v) - \psiopww_{11}(v)\psiopww_{22}(v)).
\label{eq:pfev-y4-7}
\end{align}
%
\Omit{
\vspace{-0.1in}
\begin{equation}\label{eq:pfev-y4-7}
(u-v)\lb \psiopww_{12}(u), \psiopww_{21}(v)\rb = 
\hbar (\psiopww_{11}(u)\psiopww_{22}(v) - \psiopww_{11}(v)\psiopww_{22}(v)).
\end{equation}}
%
By definition, we have
\begin{multline*}
[x_i^+(u),x_i^-(v)] = 
\psiopww_{11}\lp u+\hhalf\rp^{-1}\psiopww_{12}\lp u+\hhalf\rp \psiopww_{11}\lp v-\hhalf\rp^{-1}
\psiopww_{21}\lp v-\hhalf\rp \\ 
-\psiopww_{11}\lp v-\hhalf\rp^{-1}\psiopww_{21}\lp v-\hhalf\rp \psiopww_{11}\lp u+\hhalf\rp^{-1}
\psiopww_{12}\lp u+\hhalf\rp.
\end{multline*}
To make the computation less cumbersome, let us write the equation above
as $(u-v)[x_i^+(u),x_i^-(v)] = \mathcal{T}_1(u,v) - \mathcal{T}_2(u,v)$. 
The two terms on the \rhs can be simplified 
using \eqref{eq:pfev-y4-3} and \eqref{eq:pfev-y4-4}.
\begin{align*}
\mathcal{T}_1 &= 
\psiopww_{11}\lp u+\hhalf\rp^{-1}\psiopww_{11}\lp v-\hhalf\rp^{-1}\cdot \\
& \phantom{=}
\cdot \lp
(u-v+\hbar)\psiopww_{12}\lp u+\hhalf\rp - \hbar \psiopww_{11}\lp u+\hhalf\rp
\psiopww_{12}\lp v-\hhalf\rp \psiopww_{11}\lp v-\hhalf\rp^{-1}
\rp\cdot \\
&\phantom{=} \cdot \psiopww_{21}\lp v-\hhalf\rp ,\\
\mathcal{T}_2 &= 
\psiopww_{11}\lp u+\hhalf\rp^{-1}\psiopww_{11}\lp v-\hhalf\rp^{-1}\cdot \\
& \phantom{=}
\cdot \lp
(u-v+\hbar)\psiopww_{21}\lp v-\hhalf\rp - \hbar \psiopww_{11}\lp v-\hhalf\rp
\psiopww_{21}\lp u+\hhalf\rp \psiopww_{11}\lp u+\hhalf\rp^{-1}
\rp\cdot \\
&\phantom{=} \cdot \psiopww_{12}\lp u+\hhalf\rp .
\end{align*}

Thus we get using \eqref{eq:pfev-y4-5}, \eqref{eq:pfev-y4-6} and
\eqref{eq:pfev-y4-7}, that \eqref{eq:pfev-y4-1} holds, upon carrying out
the simplification of its \lhs as follows:
\begin{align*}
\text{L.H.S.}&= 
\psiopww_{11}\lp u+\hhalf\rp^{-1}\psiopww_{11}\lp v-\hhalf\rp^{-1}\cdot
(u-v+\hbar)\lb \psiopww_{12}\lp u+\hhalf\rp, \psiopww_{21}\lp v-\hhalf\rp\rb \\
& \phantom{=} -\hbar \psiopww_{11}\lp v+\hhalf\rp^{-1}\psiopww_{11}\lp v-\hhalf\rp^{-1}
\psiopww_{12}\lp v+\hhalf\rp \psiopww_{21}\lp v-\hhalf \rp\\ 
&\phantom{=} +\hbar \psiopww_{11}\lp u+\hhalf\rp^{-1}\psiopww_{11}\lp u-\hhalf\rp^{-1}
\psiopww_{21}\lp u-\hhalf\rp \psiopww_{12}\lp u+\hhalf \rp \\
\phantom{\text{L.H.S.}}
&= \hbar \psiopww_{11}\lp v+\hhalf\rp^{-1}\psiopww_{11}\lp v-\hhalf\rp^{-1}\cdot
\lp
\psiopww_{11}\lp v+\hhalf\rp \psiopww_{22}\lp v-\hhalf\rp - \right. 
\end{align*}
\begin{align*}
\phantom{\text{L.H.S.}}
& \qquad\qquad
\left.\psiopww_{12}\lp v+\hhalf\rp \psiopww_{21}\lp v-\hhalf\rp
\rp \\
&\phantom{=} -\hbar \psiopww_{11}\lp u+\hhalf\rp^{-1}\psiopww_{11}\lp u-\hhalf\rp^{-1}\cdot
\lp
\psiopww_{11}\lp u-\hhalf\rp \psiopww_{22}\lp u+\hhalf\rp - \right. \\
& \qquad\qquad
\left.\psiopww_{21}\lp u-\hhalf\rp \psiopww_{12}\lp u+\hhalf\rp
\rp\\
&= \hbar (\xi_i(v) - \xi_i(u)).
\end{align*}

\subsection{Partial fractions}\label{ssec:partialfractions}

The following proposition is needed to compute the composition $\ev\circ\Phi$,
where $\Phi: \Uhsl{n}\to \wh{\Yhsl{n}}$ is the algebra homomorphism from 
Section \ref{ssec:homPhi}. For this, let us recall that $\{\roots{k}{1},
\ldots, \roots{k}{k}\}$ are the roots of the polynomial $\Psl_k(u)$.

\begin{prop}\label{pr:partialfractions}
For each $k\in\bfI$, we have
\begin{align*}
\ev(x_k^+(u)) &= \sum_{i=1}^k \frac{\hbar}{u+\hhalf-\roots{k}{i}} \cdot \lp
\sum_{j=1}^k (-1)^{k+j} \frac{\qmin{1,\ldots,\wh{j},\ldots,k}{1,\ldots,k-1}{\roots{k}{i}-\hhalf (k-1)}}
{\prod_{c\neq i} (\roots{k}{i}-\roots{k}{c})}
e_{j,k+1}
\rp,\\
\ev(x_k^-(u)) &= \sum_{i=1}^k \frac{\hbar}{u-\hhalf-\roots{k}{i}} \cdot \lp
\sum_{j=1}^k (-1)^{k+j} \frac{\qmin{1,\ldots,k-1}{1,\ldots,\wh{j},\ldots,k}{\roots{k}{i}-\hhalf (k-3)}}
{\prod_{c\neq i} (\roots{k}{i}-\roots{k}{c})}
e_{k+1,j}
\rp.
\end{align*}
\end{prop}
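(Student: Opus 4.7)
The plan is to derive both formulas directly from the definitions \eqref{eq:evx+}--\eqref{eq:evx-} by combining the column/row expansions of quantum minors from Lemma \ref{lem:q-minors}(3) with the standard partial fraction expansion of $\Psl_k(u\pm\hhalf)^{-1}$ in the splitting extension of $\Psl_k(u)$.

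First I would use the identity $\lb e_{k,k+1}, \Psl_k(u+\hhalf)\rb = -\qmin{1,\ldots,k}{1,\ldots,k-1,k+1}{u-\hhalf(k-2)}$ noted in \S\ref{ssec:evisrecursive} to rewrite
\[
\ev(x_k^+(u)) = -\Psl_k(u+\hhalf)^{-1}\cdot \qmin{1,\ldots,k}{1,\ldots,k-1,k+1}{u-\hhalf(k-2)},
\]
and then apply the last--column expansion from Lemma \ref{lem:q-minors}(3). Since $j\leqslant k<k+1$, each entry $\Tsl_{j,k+1}(w)=-\hbar\,e_{j,k+1}$ is off--diagonal and independent of $w$, and so
\[
\ev(x_k^+(u)) = \hbar \sum_{j=1}^k (-1)^{k+j}\,\Psl_k(u+\hhalf)^{-1}\,M_j(u)\,e_{j,k+1},
\]
where $M_j(u):=\qmin{1,\ldots,\wh{j},\ldots,k}{1,\ldots,k-1}{u-\hhalf(k-2)}$ is a polynomial in $u$ of degree at most $k-1$.

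The key observation is that $M_j(u)$ commutes with $\Psl_k(v)$ for every $v$. Expanding $M_j(u)$ as a signed sum of products of $\Tsl_{ab}(w)$, every pair $(a,b)$ appearing satisfies $a\in\{1,\ldots,k\}\setminus\{j\}\subset\{1,\ldots,k\}$ and $b\in\{1,\ldots,k-1\}\subset\{1,\ldots,k\}$, so each $\Tsl_{ab}(w)$ commutes with $\Psl_k(v)=\qmin{1,\ldots,k}{1,\ldots,k}{v-\hhalf(k-1)}$ by Corollary \ref{cor:minor-comm}. In particular the coefficients of $M_j$ commute with the roots $\roots{k}{1},\ldots,\roots{k}{k}$, which reduces the non--commutative partial fractioning to a purely scalar calculation. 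Since $\Psl_k(u+\hhalf)=\prod_{i=1}^k (u+\hhalf-\roots{k}{i})$ and $\deg M_j<\deg \Psl_k$, the standard residue computation gives
\[
\Psl_k(u+\hhalf)^{-1}\,M_j(u) = \sum_{i=1}^k \frac{M_j(\roots{k}{i}-\hhalf)}{(u+\hhalf-\roots{k}{i})\,\prod_{c\neq i}(\roots{k}{i}-\roots{k}{c})}.
\]
Evaluating $M_j(\roots{k}{i}-\hhalf)=\qmin{1,\ldots,\wh{j},\ldots,k}{1,\ldots,k-1}{\roots{k}{i}-\hhalf(k-1)}$ and exchanging the order of the summations over $i$ and $j$ gives exactly the claimed formula for $\ev(x_k^+(u))$.

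The proof for $\ev(x_k^-(u))$ is entirely analogous, starting from $\lb\Psl_k(u-\hhalf),e_{k+1,k}\rb=-\qmin{1,\ldots,k-1,k+1}{1,\ldots,k}{u-\hhalf k}$ and applying the last--row expansion (along the row indexed by $k+1$). The spectral shifts then produce the argument $\roots{k}{i}-\hhalf(k-3)$ in place of $\roots{k}{i}-\hhalf(k-1)$. I do not anticipate any serious obstacle: the only real subtlety is justifying partial fractions in a non--commutative setting, which is ensured by the commutation between the reduced minor and $\Psl_k$ coming from Corollary \ref{cor:minor-comm}.
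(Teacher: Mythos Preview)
Your proposal is correct and follows essentially the same route as the paper's own proof: rewrite $\ev(x_k^\pm(u))$ via the identities in \S\ref{ssec:evisrecursive}, expand the resulting quantum minor along the last column (resp.\ last row) using Lemma~\ref{lem:q-minors}(3), use that the off--diagonal entries $\Tsl_{j,k+1}$, $\Tsl_{k+1,j}$ equal $-\hbar e_{j,k+1}$, $-\hbar e_{k+1,j}$, invoke Corollary~\ref{cor:minor-comm} for the commutation with $\Psl_k$, and finish with the scalar partial--fraction identity (valid since the roots of $\Psl_k$ are distinct by Proposition~\ref{pr:bethe}(3)). The only point you leave implicit is the distinctness of the roots, which the paper cites explicitly; otherwise the arguments match.
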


\begin{pf}
From the definition \eqref{eq:evx+}, \eqref{eq:evx-}, and the observation made
in Section \ref{ssec:evisrecursive}, we have the following:
\begin{align*}
\ev(x_k^+(u)) &= -\Psl_k\lp u+\hhalf\rp^{-1} \qmin{1,\ldots,k}{1,\ldots,k-1,k+1}{u+\hhalf
-\hhalf(k-1)},\\
\ev(x_k^-(u)) &= -\Psl_k\lp u-\hhalf\rp^{-1} \qmin{1,\ldots,k-1,k+1}{1,\ldots,k}{u-\hhalf
-\hhalf(k-1)}.
\end{align*}

Using the column and row expansions of quantum minors (first and third equations of 
Lemma \ref{lem:q-minors} (3)), we get
\begin{align*}
\qmin{1,\ldots,k}{1,\ldots,k-1,k+1}{w}&= \sum_{j=1}^k (-1)^{k+j} 
\qmin{1,\ldots,\wh{j},\ldots,k}{1,\ldots,k-1}{w} \Tsl_{j,k+1}(w+\hbar(k-1)),\\
\qmin{1,\ldots,k-1,k+1}{1,\ldots,k}{w}&= \sum_{j=1}^k (-1)^{k+j}
\qmin{1,\ldots,k-1}{1,\ldots,\wh{j},\ldots,k}{w+\hbar} \Tsl_{k+1,j}(w).
\end{align*}

Note that, for $j\in\{1,\ldots,k\}$, $\Tsl_{j,k+1}(w) = -\hbar e_{j,k+1}$ and $\Tsl_{k+1,j}(w)
=-\hbar e_{k+1,j}$. Combining these, we arrive at the following expressions:
\begin{align*}
\ev(x_k^+(u)) &= \hbar\Psl_k\lp u+\hhalf\rp^{-1} \sum_{j=1}^k (-1)^{k+j} 
\qmin{1,\ldots,\wh{j},\ldots,k}{1,\ldots,k-1}{u+\hhalf-\hhalf(k-1)} e_{j,k+1}, \\
\ev(x_k^-(u)) &= \hbar\Psl_k\lp u-\hhalf\rp^{-1} \sum_{j=1}^k (-1)^{k+j}
\qmin{1,\ldots,k-1}{1,\ldots,\wh{j},\ldots,k}{u+\hhalf-\hhalf(k-1)} e_{k+1,j}.
\end{align*}

Now $\Psl_k(w)$ commutes with the quantum minors involved in the expressions above.
Moreover, the degree of each quantum minor in the \rhs of the equations is strictly
less than that of $\Psl_k$. Thus, if $\{\roots{k}{1},\ldots,\roots{k}{k}\}$ are the
roots of $\Psl_k(u)$, then we have the following partial fraction decomposition.
\begin{gather*}
\frac{Q_j\lp u+\hhalf-\hhalf(k-1)\rp}{\Psl_k\lp u+\hhalf\rp} = 
\sum_{i=1}^k \frac{1}{u+\hhalf-\roots{k}{i}} \frac{Q_j\lp \roots{k}{i}-\hhalf (k-1)\rp}
{\prod_{c\not= i} (\roots{k}{i}-\roots{k}{c})},\\
\frac{R_j\lp u+\hhalf-\hhalf(k-1)\rp}{\Psl_k\lp u-\hhalf\rp} = 
\sum_{i=1}^k \frac{1}{u-\hhalf-\roots{k}{i}} \frac{R_j\lp \roots{k}{i}+\hbar-\hhalf (k-1)\rp}
{\prod_{c\not= i} (\roots{k}{i}-\roots{k}{c})},
\end{gather*}
where
\begin{gather*}
Q_j(w) = \qmin{1,\ldots,\wh{j},\ldots,k}{1,\ldots,k-1}{w},\\
R_j(w) = \qmin{1,\ldots,k-1}{1,\ldots,\wh{j},\ldots,k}{w}.
\end{gather*}
Note that we have used Proposition \ref{pr:bethe} (3) here, and the following
well--known identity for a rational function vanishing at $\infty$ and whose
denominator has distinct roots ($\deg(p) < r$ in the equation below):
\[
\frac{p(x)}{\prod_{i=1}^r (x-a_i)} = \sum_{i=1}^r \frac{1}{x-a_i} \frac{p(a_i)}{\prod_{j\neq i}
(a_i-a_j)}.
\]
This proves the proposition.
\end{pf}

\subsection{Evaluation homomorphism in $J$--presentation}\label{ssec:compareCP}

It is perhaps worth pointing out that our homomorphism $\ev$ is the evaluation homomorphism
at $0$ from \cite[Prop.~12.1.15]{chari-pressley}, denoted below by $\evcp$. The significant
difference being that $\evcp$ is explicitly given in the $J$--presentation of the
Yangian.\\ 

To see that $\ev = \evcp$ one begins by making the observation that $\Yhsl{n}$
is generated by $\{\xi_{i,0},x_{i,0}^{\pm}\}_{i\in\bfI}$ and $t_{1,1}$ defined as
$t_{1,1} := \xi_{1,1} - \hhalf \xi_{i,0}^2$.
This is because we have the following relations
\[
[t_{1,1},x_{1,r}^{\pm}] = \pm 2 x_{1,r+1}^{\pm} \aand
[t_{1,1},x_{2,r}^{\pm}] = \mp   x_{2,r+1}^{\pm}.
\]

Thus, we can get $\{x_{j,r}^{\pm}\}_{j=1,2}$ from $\{x_{j,0}^{\pm}\}_{j=1,2}$ and $t_{1,1}$.
In turn, using $[x_{2,r}^+,x_{2,s}^-] = \xi_{2,r+s}$ we can obtain $t_{2,1}$. Continuing
in this fashion, we see that every element from $\{x_{i,r}^{\pm}, \xi_{i,r}\}_{i\in\bfI, r\in\N}$
can be written in terms of $\{x_{i,0}^{\pm},\xi_{i,0}\}_{i\in\bfI}$ and $t_{1,1}$.\\

Using the argument given above, and the fact that both $\ev$ and $\evcp$ map
$x_{i,0}^+\mapsto e_{i,i+1}$ and $x_{i,0}^-\mapsto e_{i+1,i}$, we are left with
checking that $\ev(t_{1,1}) = \evcp(t_{1,1})$.\\

\noindent {\em Computation of $\ev(t_{1,1})$.} Recall that we have the following
formula for $\ev(\xi_1(u))$, from \eqref{eq:evxi1} (see also the $n=2$ example from 
Section \ref{ssec:n=2}). Below $C_1 = e_{12}e_{21}+e_{21}e_{12} + \frac{h_1^2}{2}$
is the Casimir of $\sl_2$ corresponding to the node $1$.
\begin{align*}
\ev(\xi_1(u)) &= \frac{\lp u-\hhalf\cowt_2\rp^2 - \frac{\hbar^2}{4}(2C_1+1)}
{\lp u+\hhalf-\hbar\cowt_1\rp \lp u-\hhalf-\hbar\cowt_1\rp}\\
&= \lp 1 - \hbar\cowt_2 u^{-1} - \frac{\hbar^2}{4} (2C_1+1-(\cowt_2)^2 u^{-2}\rp \cdot \\
& \qquad \cdot \lp 1-\hbar\lp \cowt_1-\frac{1}{2}\rp u^{-1}\rp^{-1}
\cdot \lp 1-\hbar\lp \cowt_1-\frac{1}{2}\rp u^{-1}\rp^{-1}.
\end{align*}

Recall that $\xi_{1,1}$ is the coefficient of $\hbar u^{-2}$ in $\xi_1(u)$. A straightforward
computation gives the following answer:

\begin{equation}\label{eq:evt11}
\ev(t_{1,1}) = \hhalf \lp \cowt_2 h_1 - e_{12}e_{21} - e_{21}e_{12}\rp.
\end{equation}

\noindent {\em Computation of $\evcp(t_{1,1})$.}
Combining the expression of $\evcp$ given in \cite[Prop.~12.1.15]{chari-pressley} with
the isomorphism between the $J$--presentation and the loop presentation of $\Yhsl{n}$
from \cite[Thm.~12.1.3]{chari-pressley} (see also \cite{drinfeld-yangian-qaffine}),
we get the following:
\begin{equation}\label{eq:evcpt11}
\evcp(t_{1,1})\! =\! \frac{\hbar}{4} \lp
\sum_{\lambda,\mu} \Tr{}(h_1(I_{\lambda}I_{\mu}\! +\! I_{\mu}I_{\lambda})) I_{\lambda}I_{\mu}
- \sum_{\beta>0} (\beta,\alpha_1) (x^+_{\beta}x^-_{\beta}\! +\! x_{\beta}^-x_{\beta}^+)
\rp
\end{equation}
where
\begin{itemize}
\item $\{I_{\lambda}\}_{\lambda}$ is an orthonormal basis of $\sl_n$ (\wrt the inner
product $(X,Y) = \Tr{}(XY)$ when $X,Y\in\sl_n$ are viewed as $n\times n$ matrices).

\item In the first term, $h_1, I_{\lambda}, I_{\mu}$ are to be multiplied as
$n\times n$ matrices and $\Tr{}$ is the trace of the resulting matrix.

\item $\beta>0$ refers to the set of positive roots of $\sl_n$.
\end{itemize}

We carry out the simplification of the \rhs of \eqref{eq:evcpt11}. Let us write $T_1$
and $T_2$ for the two terms there. Then
\[
T_1 = T_1^0 + \sum_{j>2} \kappa_{1j} - \sum_{j>2} \kappa_{2j} 
\aand
T_2 = 2\kappa_{12} + \sum_{j>2} \kappa_{1j} - \sum_{j>2} \kappa_{2j},
\]
where $\kappa_{ij} = e_{ij}e_{ji} + e_{ij}e_{ji}$ and $T_1^0$ is the Cartan part of
the first term $T_1$, namely when $I_{\lambda},I_{\mu}\in \h$. That is,
\[
T_1^0 =  \sum_{\begin{subarray}{c} \lambda,\mu \\ I_{\lambda},I_{\mu}\in\h \end{subarray}} 
\Tr{}(h_1(I_{\lambda}I_{\mu} + I_{\mu}I_{\lambda})) I_{\lambda}I_{\mu}.
\]
Finally, it is enough to observe that in $U\h$ one has $\displaystyle
\hhalf \cowt_2 h_1 = \frac{\hbar}{4} T_1^0$.

\end{document}